\newcommand{\beq}{\begin{equation}}
\newcommand{\eeq}{\end{equation}}
\DeclareMathOperator{\Com}{Com}
\DeclareMathOperator{\V}{V}
\DeclareMathOperator{\Int}{Int}
\def\T{\mathcal{T}}
\def\R{\mathbb{R}}
\def\C{\mathcal{C}}
\def\D{\mathcal{D}}
\def\Ri{Riemannian }
\numberwithin{equation}{section}
\newtheorem{theorem}{Theorem}
\newtheorem{thm}{Theorem}
\newtheorem{proposition}[thm]{Proposition}
\newtheorem{lemma}[thm]{Lemma}
\newtheorem{definition}[thm]{Definition}
\newtheorem{conj}[thm]{Conjecture}
\newtheorem{remark}[thm]{Remark}
\renewcommand{\emph}[1]{{\bfseries\itshape{#1}}}
\numberwithin{figure}{section}
\def\Ri{Riemannian }
\def\SR{Sub-Riemannian }
\def\sR{sub-Riemannian } 
\def\ma{metabelian }
\newcommand{\Lag}{\mathfrak{g}}
\newcommand{\Laa}{\mathfrak{a}}
\newcommand{\Ag}{\mathbb{A}}
\newcommand{\G}{\mathbb{G}}
\newcommand{\Ho}{\mathcal{H}}
\newcommand{\Je}{\mathcal{J}^2(\mathbb{R},\mathbb{R}^n)}
\newcommand{\PJ}{\mathcal{J}^2(\mathbb{R},\mathbb{R}^2)}
\newcommand{\kJ}{\mathcal{J}^k(\mathbb{R},\mathbb{R}^n)}
\begin{document}

\title[Metric Lines in the Space of Curves.]{Metric Lines in the Space of Curves.}
\author[D. Catalá]{Daniella\ Catalá} 
\address{Daniella Catalá: Department of Mathematics, Massachusetts Institute of Technology, Cambridge, MA 02139, U.S.}
\email{dcatala@mit.edu}
\author[M. Vollmayr-Lee]{Miriam\ Vollmayr-Lee} 
\address{Miriam Vollmayr-Lee: Department of Mathematics, Grinnell College, Grinnell, IA 50112, U.S.}
\email{vollmayr@grinnell.edu}  
\author[A.\ Bravo-Doddoli]{Alejandro\ Bravo-Doddoli} 
\address{Alejandro Bravo-Doddoli: Department of Mathematics, University of Michigan, Ann Arbor, MI 48109, U.S.}
\email{Abravodo@umich.edu}

\keywords{Sub-Riemannian Geometry, Carnot Groups, Jet-space, Metric lines}
\begin{abstract} 
This paper investigates sub-Riemannian geodesics within the jet space of curves. We establish the existence of two distinct families of metric lines---that is, globally minimizing geodesics---in the $2$-jet space of plane curves. This result provides an initial contribution toward the broader classification of metric lines in jet spaces. Additionally, we present precise criteria, which characterize when a sub-Riemannian geodesic in the $2$-jet space of plane curves can be identified as a metric line.
\end{abstract}

\maketitle

\section{Introduction}

In this work, we employ the sequence method to demonstrate the existence of two distinct families of metric lines in the $2$-jet space of plane curves, denoted $\PJ$. This result marks an initial advance toward resolving the conjecture regarding the classification of metric lines in $\PJ$. More broadly, in the general setting, the $k$-jet space of curves $\kJ$ admits the structure of a Carnot group \cite{warhurstjet}. Every Carnot group admits a \sR structure, and so does $\kJ$. To place our study in context, consider a sub-Riemannian geodesic on a sub-Riemannian manifold $M$, meaning a curve that locally minimizes arc length. A central question then arises: under what conditions does a sub-Riemannian geodesic serve as a global minimizer? We define a metric line as a geodesic $\gamma:\R\to M$ such that $\gamma(t)$ is globally minimizing.

The sequence method, introduced by the third author in  \cite{BravoDoddoli+2024,bravododdoligeltype}, offers a systematic framework for determining whether a given candidate geodesic is a metric line. The process begins by selecting a candidate geodesic, obtained by classifying geodesics in $\kJ$ into five distinct types: line, periodic, homoclinic, turn-back, and direct-type (see Section \ref{sub-sec:class-geo}). Based on this classification, the candidates of interest are those of homoclinic or direct-type. With this context, we now state our conjecture regarding metric lines in $\kJ$.

\begin{conj}\label{con:jet-space}
    The metric lines in $\kJ$ are precisely the line, homoclinic, and direct-type geodesics. 
\end{conj}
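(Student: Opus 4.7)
The plan is to establish the conjecture by treating its two directions separately. In the easy direction, line-type geodesics are one-parameter subgroups whose horizontal projection to the abelianization $\kJ/[\kJ,\kJ]$ is a Euclidean straight line saturating the Carnot--Carathéodory distance; they are globally minimizing essentially by a projection argument lifting the triangle inequality from $\R^n$.

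To rule out periodic and turn-back geodesics as metric lines, I would argue directly in each case. If $\gamma$ is periodic of period $T$, then one period closes up in the bottom stratum but accumulates nontrivial displacement in a deeper stratum of the grading; by concatenating many copies of the loop with short horizontal corrections built from the growth vector of $\kJ$, one obtains curves strictly shorter than $\gamma|_{[0,NT]}$ for all large $N$, breaking global minimality. For turn-back geodesics, the defining property is that the projection to some jet coordinate reverses direction at a time $t_0$; a reflection-and-splice argument around $t_0$ yields a horizontal competitor with the same endpoints as $\gamma$ restricted to a large enough symmetric interval about $t_0$ but strictly shorter length.

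The main content of the conjecture is the claim that homoclinic and direct-type geodesics are metric lines, and for this I would employ the sequence method of \cite{BravoDoddoli+2024,bravododdoligeltype}. Given a candidate $\gamma:\R\to\kJ$ of one of these two types, for each $n\in\mathbb{N}$ let $\sigma_n$ be a minimizing horizontal curve from $\gamma(-n)$ to $\gamma(n)$. Step one is to extract a subsequential limit $\sigma_\infty$, using compactness of the unit cotangent sphere together with left invariance and the dilation structure of the Carnot group to control the initial momenta of the $\sigma_n$. Step two is to show that $\sigma_\infty$ is itself a normal geodesic with well-defined asymptotic momenta at $\pm\infty$. Step three is to invoke the five-type classification: the only geodesic whose asymptotics match those of a homoclinic or direct-type curve is a left translate and reparametrization of $\gamma$ itself. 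Once this identification is secured, passing to the limit in the length of $\sigma_n$ gives that $d(\gamma(-n),\gamma(n))$ equals the arc length of $\gamma|_{[-n,n]}$ for every $n$, and $\gamma$ is a metric line.

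The principal obstacle is step three in the general $k$-jet setting. For $k=2$ the Hamiltonian system is integrable enough that an explicit analysis of the momentum trajectory---of the kind carried out in the body of this paper---suffices to rule out the wrong limits. For higher $k$ the momentum space is larger, and genuinely new asymptotic regimes can appear in which partial components of the momentum degenerate while others persist, producing candidate limits that do not occur for small $k$. The key technical task I expect to be hardest is thus classifying all weak limits of left translates of normal geodesics under prescribed asymptotic momentum data, and showing that homoclinic or direct-type asymptotics force the limit to be of the same type; this is the step that would have to be developed beyond what appears in \cite{BravoDoddoli+2024,bravododdoligeltype} and the present paper.
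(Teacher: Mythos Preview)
The statement you are attempting to prove is labeled a \emph{conjecture} in the paper, and the paper does not prove it. The paper's main results (Theorems~\ref{the:main} and~\ref{thm:main-2}) establish only that two particular one-parameter subfamilies of homoclinic geodesics in $\PJ$ are metric lines, and the conclusion section explicitly lists the remaining obstructions even for $\PJ$. So there is no proof in the paper to compare your proposal against; you are sketching an attack on an open problem.

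Your outline is broadly the sequence method the paper uses for its partial results, and your identification of ``step three'' as the crux is correct in spirit. But the way you phrase that step---``the only geodesic whose asymptotics match those of a homoclinic or direct-type curve is a left translate and reparametrization of $\gamma$ itself''---hides precisely the mechanism the paper isolates as the real difficulty. The paper does not argue by matching asymptotic momenta directly; instead it passes to an auxiliary magnetic space $\R^5_{(a,b)}$, encodes the asymptotic data of a homoclinic geodesic in a \emph{period map} $\Theta^\pm_{(a,b)}:\mathcal{A}^\pm_{(a,b)}\to\R^3$ (Definition~\ref{def:period-map}), and then needs this period map to be injective (condition~\textbf{\textit{(4)}} of Theorem~\ref{thm:main-2}). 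Injectivity is proved via Hadamard's global diffeomorphism theorem, which requires showing the image $N_{(a,b)}$ is simply connected---and the paper can only do this for the special slices $b=0$ or $a=0$. Your proposal does not mention the period map at all, so it does not engage with the concrete obstruction the paper leaves open.

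A smaller point: your arguments for excluding periodic and turn-back geodesics (concatenation-with-corrections, reflection-and-splice) are plausible heuristics but are not how the paper or its references handle this. The paper cites \cite{bravododdoligeltype} and Proposition~\ref{prop:met-char}, which gives a necessary average-velocity condition that periodic and turn-back geodesics fail; this is cleaner than the cut-and-paste constructions you sketch.
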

It is important to note that every Carnot group admits a family of sub-Riemannian geodesics of the line type, all of which are metric lines. In contrast, periodic and turn-back geodesics in $\kJ$ do not correspond to metric lines \cite{bravododdoligeltype}. 

The first main contribution of this work is as follows.
\begin{theorem}\label{the:main}
    Besides line geodesics, $\PJ$ admits two families of metric lines of homoclinic type. Modulo Carnot dilatations and translations, these families depend on a single parameter and are defined in Section \ref{subsubsec:cand-geo}. 
\end{theorem}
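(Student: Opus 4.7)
The plan is to apply the sequence method developed in \cite{BravoDoddoli+2024,bravododdoligeltype} to each of the two candidate homoclinic geodesics identified in Section \ref{subsubsec:cand-geo}. Fix one such candidate $\gamma\colon\R\to\PJ$, parameterized by arclength. The goal is to prove that for every $T>0$,
\[
 d_{SR}\bigl(\gamma(-T),\gamma(T)\bigr)\;=\;2T,
\]
which is equivalent to $\gamma$ being globally minimizing.

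First I would set up the sub-Riemannian Hamiltonian on $T^{*}\PJ$ using the graded left-invariant frame coming from the Carnot structure on $\PJ$, and reduce the geodesic flow to a Hamiltonian system on the dual of the first layer of the Lie algebra. The five-fold classification of geodesics (line, periodic, homoclinic, turn-back, direct) then corresponds to the qualitative behavior of the reduced momentum curve. I would verify that the candidates in Section \ref{subsubsec:cand-geo} correspond precisely to homoclinic orbits of the reduced system, and that Carnot dilations and translations together cut the family down to a one-parameter subfamily, matching the statement.

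Next comes the asymptotic analysis. Since the reduced momentum $p(t)$ tends to a common fixed point as $t\to\pm\infty$ and does so exponentially, the tail of $\gamma$ is approximated, in the appropriate Carnot sense, by a line-type geodesic in the asymptotic horizontal direction. I would use this to produce quantitative control on $d_{SR}(\gamma(-T),\gamma(T))$ and on the endpoint ``shape'' that any competing minimizer must match. The sequence argument then proceeds by contradiction: if $\gamma$ were not a metric line, some $T_0$ and $\eps>0$ would yield, for every $T\geq T_0$, a normal minimizing geodesic $\sigma_T$ from $\gamma(-T)$ to $\gamma(T)$ with length at most $2T-\eps$. Applying Carnot dilations centered at the midpoint to rescale $\sigma_T$ to unit length, the rescaled curves $\hat\sigma_T$ lie in a fixed compact set, and Ascoli--Arzelà (together with compactness of the normalized momentum space) extracts a subsequential limit geodesic $\hat\sigma_\infty$. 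Using the classification of geodesic types, one rules out that $\hat\sigma_\infty$ is periodic or turn-back (those cannot connect the required asymptotic ends of $\gamma$), and one checks the conserved quantities to show $\hat\sigma_\infty$ must agree, up to the symmetries already quotiented out, with the rescaled limit of $\gamma$, contradicting the length deficit $\eps$.

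The main obstacle is the final rigidity step: proving that no other homoclinic or direct-type geodesic with the same asymptotic data can compete. This demands an explicit analysis of the reduced two-dimensional Hamiltonian system, a careful inventory of all momentum trajectories that limit to the relevant equilibrium at $t\to\pm\infty$, and the use of the first integrals to pin down the limit $\hat\sigma_\infty$ uniquely. This is the delicate step where the specific shape of the homoclinic loops in $\PJ$, as opposed to the Gel'fand-type case treated in \cite{bravododdoligeltype}, genuinely enters, and where the companion criterion characterizing when a geodesic is a metric line (also stated in this paper) will be invoked to close the argument.
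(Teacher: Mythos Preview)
Your outline captures the spirit of the sequence method but misses several structural ingredients that the paper relies on, and one step as written would not work.

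First, the paper does not run the sequence argument directly in $\PJ$. It constructs an auxiliary five-dimensional \sR manifold $\R^5_{(a,b)}$ together with a \sR submersion $\pi_{(a,b)}:\PJ\to\R^5_{(a,b)}$, projects the candidate $\gamma_h$ to $c_h:=\pi_{(a,b)}\circ\gamma_h$, and proves $c_h$ is a metric line in $\R^5_{(a,b)}$; Lemma \ref{lemm:hor-lift-met-lin} then lifts the conclusion back to $\PJ$. This reduction is essential: $\R^5_{(a,b)}$ has an abelian translation isometry group in the $(\mathbf{y},\mathbf{z})$-coordinates, and the competing geodesics there are governed by a two-parameter pencil of vector-valued polynomials rather than the full four-parameter momentum space.

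Second, your compactness step uses Carnot dilations to rescale $\sigma_T$ to unit length. This does not yield a useful limit: both the rescaled candidate and the rescaled competitor blow down to the same line geodesic in the asymptotic direction, so no contradiction emerges. The paper instead keeps the scale fixed and translates in time so that the sequence $c_n$ passes through a fixed compact set $K_0$ at $t=0$; building $K_0$ requires the cost-function bounds and the geodesic-compactness Lemma \ref{lem:geo-com-ab}, neither of which you mention.

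Third, and most importantly, the rigidity step is not handled by ``conserved quantities'' in the sense you suggest. The paper encodes the asymptotic endpoint data of any homoclinic competitor in a \emph{period map} $\Theta^{\pm}_{(a,b)}:\mathcal{A}^{\pm}_{(a,b)}\to\R^3$ and proves it is injective (Theorem \ref{the:per-map-into}). Injectivity is obtained by applying Hadamard's Global Diffeomorphism Theorem to a suitable reparametrization $F_{(a,b)}$ of $\Theta^{\pm}_{(a,b)}$, after checking nondegeneracy of the Jacobian (Lemma \ref{lem:jac-F}) and simple connectedness of the image for the two special families $(a,0)$ and $(0,b)$. This is precisely why only these two one-parameter families are covered by Theorem \ref{the:main}: simple connectedness of the image for general $(a,b)$ remains open. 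Your proposal does not isolate the period map or identify injectivity as the decisive obstruction, so the ``delicate step'' you flag is in fact the core of the proof and needs this machinery.
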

 We have two remarks about Theorem \ref{the:main}. First, $\PJ$ is a 3-step Carnot group, which implies that it does not admit geodesics of the direct-type (a higher step is required). Second, modulo Carnot translations and dilatations on $\PJ$, the \sR geodesics of homoclinic type in $\PJ$ are a two-parameter family (see Section \ref{subsec:two-jet}). Finally, in this paper, we consider the general case of homoclinic geodesics, setting the foundation for the general proof of Conjecture \ref{con:jet-space}, which will be completed by proving two additional results concerning an auxiliary function for the sequence method known as the period map (see Section Future Work and details below).    

 The second main result of this paper is Theorem \ref{thm:main-2}, which establishes the condition under which a \sR geodesic in $\PJ$ is a metric line. We prove Theorem \ref{thm:main-2} using the sequence method. The ultimate goal of the sequence method is to tackle a general conjecture that classifies metric lines in Carnot groups (see Conjecture \ref{conj}). It is important to note that Conjecture \ref{conj} states that the metric lines are precisely the geodesics exhibiting asymptotic velocity. Moreover, Proposition \ref{prop:met-char} gives a necessary condition for a \sR geodesic to be classified as a metric line. This condition states that the average velocity of the geodesic is parallel to a direction. Metric lines do not include periodic and turn-back geodesics, as these do not exhibit the required asymptotic behavior.

Another key contribution of this paper is to clarify the significance of the asymptotic velocity introduced above. Given a candidate geodesic, we will construct an auxiliary sub-Riemannian manifold $\R^5_{(a,b)}$. The sequence method relies on the period map, which encodes the asymptotic behavior of the sub-Riemannian geodesics.  
A crucial requirement of the sequence method is that the period map be one-to-one. In previous works \cite{bravo2022geodesics,BravoDoddoli+2024,bravododdoligeltype}, we considered Carnot groups whose period map had a $1$-dimensional domain, which enabled us to establish this requirement using tools from calculus. In the case of $\R^5_{(a,b)}$, the period map has a $2$-dimensional domain. In this paper, we use the Hadamard Global Diffeomorphism Theorem to show that the period map is one-to-one \cite{krantz2002implicit}. 

The problem of classifying metric lines in $\PJ$ remains open for the general three-parameter family of homoclinic geodesics. However, thanks to our work, this problem has been reduced to proving that the image of the period map is simply connected.  

\section*{Structure of the paper}
Section \ref{sec:pre} introduces the foundational concepts and tools employed throughout this work. Section \ref{subsec:sR-man} provides a concise overview of sub-Riemannian geometry and constructs the sub-Riemannian structure on $\kJ$. Section \ref{subsec:two-jet} focuses on the specific case of $\PJ$, where we parametrize the family of homoclinic geodesics relevant to our study in terms of the parameters $(a,b)$. Section \ref{subsec:mag-space} constructs the auxiliary sub-Riemannian space $\R^5_{(a,b)}$, which serves as the framework for applying the sequence method. Section \ref{sec:sec-method} presents the sequence method, states and proves  Theorem \ref{thm:main-2}, and concludes with the proof of Theorem \ref{the:main} as a direct consequence of Theorem \ref{thm:main-2}. Appendix \ref{ap:lie-alg} presents the Carnot structure of $\kJ$ along with its symmetries. Appendix \ref{Ap:abn} contains the proofs of the propositions characterizing abnormal geodesics in both $\kJ$, and $\R^5_{(a,b)}$.

\section{Preliminary}\label{sec:pre}

 This section introduces the fundamental concepts and constructions needed to prove our main results. Section \ref{subsec:sR-man} presents  $\kJ$ as a sub-Riemannian manifold. Section \ref{subsubsec:Carnot} reviews the notions of Carnot groups and formally states Conjecture \ref{conj}, which proposes a classification of metric lines in Carnot groups. This is accompanied by Proposition \ref{prop:met-char}, which characterizes them. Section \ref{subsec:jet-spa-car} develops the sub-Riemannian structure of the jet space $\kJ$, Section \ref{sss:geo-con} outlines the construction of its geodesics, and Section \ref{sub-sec:class-geo} provides their classification. Section \ref{subsec:two-jet} presents the essential properties of $\PJ$ relevant to the proof of Theorem \ref{the:main}. Section \ref{subsec:mag-space} describes the sub-Riemannian structure of $\R^5_{(a,b)}$, Section \ref{sss:mag-spa} constructs $\R^5_{(a,b)}$,  Section \ref{sss:geo-mag} describes the sub-Riemannian geodesics in $\R^5_{(a,b)}$, Section \ref{sss:hom-goe} parametrizes the set of homoclinic geodesics, Section \ref{sss:cost-fuc} introduces the cost function, an auxiliary tool used to define the period map, Section \ref{sss:per-map} defines the period map and examines its key properties, and Section \ref{sss:sec-geo} presents essential results on sequences of geodesics.

\subsection{Jet Space as a \SR Manifold}\label{subsec:sR-man}

A \sR manifold is a triple $(M,\D,\langle\cdot,\cdot\rangle)$, where $M$ is a smooth manifold, $\D$ is a bracket generating distribution, and $\langle\cdot,\cdot\rangle$ is a inner product on $\D$ \cite{montgomery2002tour,agrachev2019comprehensive}. A curve $\gamma(t)$ is called horizontal if $\dot{\gamma}(t) \in \D$ whenever  $\dot{\gamma}$ is defined. Chow's Theorem states that if a distribution $\D$ is bracket generating, then any two points in $M$ are connected by a horizontal curve \cite{montgomery2002tour}. The \sR arc length of a horizontal curve is defined in the usual way, as in \Ri geometry \cite{agrachev2019comprehensive,montgomery2002tour}. A curve is called a geodesic if it is locally minimizing. The Pontryagin maximum principle implies the existence of two families of \sR geodesics: normal and abnormal. The \sR kinetic energy is a Hamiltonian function $H_{sR}:T^*M \to \R$ whose solutions, when projected to $M$, are locally minimizing geodesics; these geodesics are called normal. There is no analogous notion for abnormal geodesics in Riemannian geometry. The primary goal of this paper is to classify metric lines in the space of curves $\PJ$, and the principal difficulty in this study is the presence of abnormal geodesics.

\begin{definition}
    Let $M$ be a \sR manifold, let $dist_M(\cdot,\cdot)$ be the \sR distance on $M$. A curve $\gamma:\R \to M$ is a metric line if it is a globally minimizing geodesic, i.e., 
    $$ |a-b| = dist_{M}(\gamma(a),\gamma(b)) \;\text{for all compact intervals}\;[a,b]\subset\R. $$
\end{definition}
Alternative names for the term \enquote{metric line} are \enquote{globally minimizing geodesic}, \enquote{isometric embedding of the real line}, and \enquote{infinite geodesic} \cite{agrachev2019comprehensive,hakavuori2023blowups}. 

The following concept is fundamental to this work.
\begin{definition}
    Let $(M,\D_M,\langle\cdot,\cdot\rangle_M)$ and $(N,\D_N,\langle\cdot,\cdot\rangle_N)$ be two \sR manifolds, and $\phi:M \to N$ be a submersion. The submersion $\phi$ is called \sR if it respects the \sR structures, i.e., $\phi_* \D_M = \D_n$ and $\phi^*\langle\cdot,\cdot\rangle_N = \langle\cdot,\cdot\rangle_M$ where $\phi_*$ and $\phi^*$ are the push-foward and pull-back of $\phi$, respectively.  
\end{definition}
Sub-Riemannian submersions are a special case of a more general class of maps, called submetry maps, in the framework of metric geometry (see Definition 3.1.23  \cite{le2025metric}). Every \sR submersion (or subsumetry) comes with an inverse map at the level of horizontal curves, called the horizontal lift (see Proposition 3.1.24 from \cite{le2025metric}). A classical result in sub-Riemannian geometry is the following.
\begin{lemma}[Proposition 1, \cite{bravo2022geodesics}]\label{lemm:hor-lift-met-lin}
 Let $\phi:M\to N$ be a \sR submersion (or subsumetry). If $\gamma(t)$ is the horizontal lift of a metric line in $N$, then $\gamma(t)$ is a metric line in $M$. 
\end{lemma}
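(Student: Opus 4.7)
The plan is to exploit two structural features of a sub-Riemannian submersion: first, that $\phi_*$ restricted to $\D_M$ is a fiberwise linear isometry onto $\D_N$, so horizontal curves in $M$ project to horizontal curves in $N$ of the \emph{same} length; second, that any horizontal curve in $N$ admits a horizontal lift starting from any prescribed point in the fiber above its initial point. These two facts together allow a clean comparison between minimizers upstairs and downstairs.

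Let $\eta:\R\to N$ be the metric line and $\gamma:\R\to M$ its horizontal lift. First, I would verify that $\gamma$ is parameterized by arc length: since $\phi_*\dot\gamma(t)=\dot\eta(t)$ and $\phi$ is a sub-Riemannian submersion, one has $|\dot\gamma(t)|_M=|\dot\eta(t)|_N=1$ almost everywhere. Consequently, on any compact interval $[a,b]\subset\R$, the restriction $\gamma|_{[a,b]}$ is a horizontal curve from $\gamma(a)$ to $\gamma(b)$ of length $|b-a|$, yielding the easy inequality
\[
 dist_M(\gamma(a),\gamma(b))\le |b-a|.
\]

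The heart of the argument is the reverse inequality. For this, take any horizontal curve $\sigma:[a,b]\to M$ from $\gamma(a)$ to $\gamma(b)$. Then $\phi\circ\sigma$ is a horizontal curve in $N$ from $\eta(a)$ to $\eta(b)$, and because $\phi^*\langle\cdot,\cdot\rangle_N=\langle\cdot,\cdot\rangle_M$ on $\D_M$, the lengths agree: $\mathrm{length}(\phi\circ\sigma)=\mathrm{length}(\sigma)$. Since $\eta$ is a metric line in $N$, one has $dist_N(\eta(a),\eta(b))=|b-a|$, and therefore
\[
 \mathrm{length}(\sigma)=\mathrm{length}(\phi\circ\sigma)\ge dist_N(\eta(a),\eta(b))=|b-a|.
\]
Taking the infimum over all admissible $\sigma$ gives $dist_M(\gamma(a),\gamma(b))\ge |b-a|$, and combining with the previous inequality yields equality. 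Since $[a,b]$ was arbitrary, $\gamma$ is a metric line in $M$.

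There is no real obstacle here: the argument is a direct unpacking of the definitions of sub-Riemannian submersion and metric line. The only two points worth flagging carefully are (i) the global existence of the horizontal lift $\gamma$ of $\eta$, which is granted by the submetry formulation (Proposition~3.1.24 of \cite{le2025metric}) and does not require completeness of $\phi$; and (ii) the fact that the infimum in the definition of $dist_M$ may be restricted to horizontal curves, so that the projection step $\sigma\mapsto\phi\circ\sigma$ is legitimate. Everything else is a bookkeeping of lengths under a map that is isometric on the horizontal bundle.
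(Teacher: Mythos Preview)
Your proof is correct and is the standard argument. Note, however, that the paper does not actually supply its own proof of this lemma: it is stated with a citation to \cite{bravo2022geodesics} (Proposition~1 there) and taken as a known result. Your argument is precisely the one that reference would give---use that $\phi$ preserves lengths of horizontal curves to get $dist_M\ge dist_N\circ(\phi\times\phi)$, then observe that the lift of $\eta$ realizes equality---so there is nothing to compare.
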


\subsubsection{ Carnot Groups and the General Conjecture}\label{subsubsec:Carnot}

A Carnot group $\G$ is a simply connected Lie group whose Lie algebra $\Lag$ is graded and nilpotent, i.e., the Lie algebra satisfies:
$$ \Lag = V_1 \oplus \dots \oplus V_s, \;\;\; [V_1,V_{i}] \subset \Lag_{i+1}\;\;\;\text{and} \;\;\; \Lag_{s+1} = \{0\}, $$
where $[\cdot,\cdot]$ is the Lie bracket and the integer $s$ is called the step of $\G$ \cite{montgomery2002tour,agrachev2019comprehensive,le2025metric}. 

Every Carnot group $\G$ posses a canonical projection $\pi:\G\to\V$, where we can identify $\V \simeq \G / [\G,\G]\simeq \Lag_1$. In addition, we equip $\G$ with a left-invariant distribution $\D:= (L_g)_*V_1$, where $L_g$ is the left translation by the element $g\in \G$, and $(L_g)_*$ is the push-forward of $L_g$. In this paper, we will consider a left-invariant \sR metric in $\G$ \cite{agrachev2019comprehensive}. We will equip $\V$ with the Euclidean inner product, which makes $\pi$ a \sR submersion. With this construction, if $\gamma(t)$ is the horizontal lift of a straight line in $\V$, then Lemma \ref{lemm:hor-lift-met-lin} implies $\gamma(t)$ is a metric line in $\G$. This family of \sR geodesics is called line geodesics (see Section \ref{sub-sec:class-geo} for more details).

With the above discussion in mind, a natural question arises: beyond geodesic lines, what constitutes a metric line in a Carnot group $\G$? The conjecture proposed by the third author of this paper seeks to address this question as follows:
\begin{conj}\label{conj}
    Consider a left-invariant \sR structure in a Carnot group $\G$. The metric lines in $\G$ are precisely the \sR geodesics $\gamma(t)$ which are parameterized by arc length and  for which there exists a unit $v\in\Lag$ such that:
\begin{equation*}
   v =  \lim_{t \to - \infty} (L_{\gamma^{-1}(t)})_* \dot{\gamma}(t) = \lim_{t \to  \infty} (L_{\gamma^{-1}(t)})_* \dot{\gamma}(t).
\end{equation*}
\end{conj}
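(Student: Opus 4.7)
The plan is to split Conjecture \ref{conj} into its two natural directions and handle each with a distinct technique. For the \emph{necessity} direction (metric line implies the existence of a common asymptotic velocity at $\pm\infty$), the approach exploits the self-similarity of $\G$ under the Carnot dilations $\delta_r$. Given a metric line $\gamma$ parametrized by arc length, form the blow-down sequence $\gamma_n(t) := \delta_{1/n}\bigl(\gamma(nt)\bigr)$. Each $\gamma_n$ is again a metric line since $\delta_r$ is a homothety of the \sR distance. Arzelà–Ascoli compactness for horizontal curves of unit speed produces a sublimit $\gamma_\infty$, which is a metric line in the asymptotic cone of $\G$. Because the asymptotic cone of a Carnot group is the group itself, $\gamma_\infty$ must be scale-invariant up to reparametrization. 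One then proves a rigidity statement: the only scale-invariant metric lines in $\G$ are the horizontal lifts of straight lines in $\V$, i.e., curves of the form $t \mapsto \exp(tv)$ with $v \in V_1$ of unit length. This produces the desired $v$ at $t \to +\infty$; the analogous argument at $-\infty$, combined with uniqueness of the blow-down of a fixed $\gamma$, forces the two limits to agree. This direction strengthens Proposition \ref{prop:met-char} (average velocity) to a limit statement.

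For the \emph{sufficiency} direction (common asymptotic velocity implies metric line), the sequence method developed in the paper is precisely the intended tool. Suppose $\gamma$ is a normal geodesic with common asymptotic velocity $v$ at both ends but is not globally minimizing. Then there exist $T_n \to \infty$ and horizontal curves $\sigma_n$ from $\gamma(-T_n)$ to $\gamma(T_n)$ with length strictly less than $2T_n$. Rescale by $\delta_{1/T_n}$: the endpoints $\delta_{1/T_n}\gamma(\pm T_n)$ converge to $\exp(\mp v)$ (after a left translation placing $\gamma(0)$ at the identity), using exactly the asymptotic velocity hypothesis. Extract a weak-$L^2$ limit of the horizontal control vectors of the rescaled $\sigma_n$; the resulting $\sigma_\infty$ is a horizontal curve of length at most $\liminf \ell(\sigma_n)/T_n < 2$ joining $\exp(-v)$ and $\exp(v)$. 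This contradicts the fact, already recalled in Section \ref{subsubsec:Carnot} via Lemma \ref{lemm:hor-lift-met-lin}, that the line geodesic $t \mapsto \exp(tv)$ is a metric line of length exactly $2$ between those endpoints.

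The principal obstacle is the rigidity statement in the necessity direction: proving that a scale-invariant metric line in an arbitrary Carnot group must be an abelianization line. Abnormal minimizers are the main source of difficulty, as already emphasized in Section \ref{subsec:sR-man}, because the blow-down of an abnormal curve need not land in the normal locus, and the dilation argument does not a priori distinguish abnormal from normal tangents. A secondary obstacle is an a priori Ball–Box type estimate in the sufficiency step, to guarantee that the rescaled competitor curves $\delta_{1/T_n}\sigma_n$ remain in a compact region and hence admit a subsequential limit. Finally, as the paper notes for the specific case of $\PJ$, turning these qualitative arguments into a complete classification requires injectivity and simple-connectedness of the image of the period map; this is the bridge between the general blow-down picture sketched above and the concrete two-parameter family of homoclinic candidates identified in Section \ref{subsec:two-jet}.
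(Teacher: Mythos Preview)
The statement you are attempting to prove is labeled as a \emph{conjecture} in the paper, and the paper does not claim to prove it. The paper's contribution is to verify the conjecture in the specific setting of $\PJ$ for two one-parameter families of homoclinic geodesics (Theorem~\ref{the:main}), and to formulate precise sufficient conditions (Theorem~\ref{thm:main-2}) under which the sequence method establishes the sufficiency direction in $\R^5_{(a,b)}$. There is therefore no ``paper's own proof'' to compare against; your proposal is an outline of an attack on an open problem.

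That said, your outline has a genuine gap in the sufficiency direction. You assume that if $\gamma$ fails to minimize on some $[-T,T]$, then the minimizing competitors $\sigma_n$ on $[-T_n,T_n]$ satisfy $\ell(\sigma_n) < 2T_n$, and after rescaling by $\delta_{1/T_n}$ you pass to a limit $\sigma_\infty$ of length strictly less than $2$. But the length deficit $2T_n - \ell(\sigma_n)$ need not grow with $T_n$; it may remain bounded (for instance equal to a fixed constant $c>0$ for all large $n$). In that case $\ell(\sigma_n)/T_n \to 2$, and lower semicontinuity only gives $\ell(\sigma_\infty) \le 2$, which is no contradiction. This is exactly why the paper's sequence method does \emph{not} rescale: it works at fixed scale, recentering via the isometries $\Phi_{(\mathbf{y}_0,\mathbf{z}_0)}$ and using the period map to identify the limit geodesic with the candidate itself up to isometry (Lemma~\ref{lem:iso-metr}). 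The injectivity of the period map (Condition~\textbf{\textit{(4)}} of Theorem~\ref{thm:main-2}) is what replaces your rescaling argument, and its verification is the actual content of Section~\ref{subsub:per-map-II}.

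For the necessity direction, you correctly flag the rigidity of scale-invariant metric lines as the main obstacle, and you are right that abnormal minimizers are the obstruction. Note, however, that the paper does not even attempt this direction in general; Proposition~\ref{prop:met-char} gives only the weaker averaged statement, and the pointwise limit version remains part of the conjecture.
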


The following result gives a necessary condition for a geodesic to be a metric line. 
\begin{proposition}[Proposition 2.6, \cite{bravododdoligeltype}]\label{prop:met-char}
 Consider the same hypotheses as in Conjecture \ref{conj}. A necessary condition for $\gamma(t)$ to be a metric line is that there exists a unit vector $v\in V_1$ such that: 
        \begin{equation}\label{eq:in-met-char}
            1 = \lim_{t \to \infty} \frac{1}{2t} \int_{-t}^t <v,(L_{\gamma^{-1}(s)})_* \dot{\gamma}(s)> ds . 
        \end{equation} 
\end{proposition}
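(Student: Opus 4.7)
My plan is to identify the integral as a displacement in the abelianization of $\G$ and then to extract the asymptotic direction through a blow-down argument. First, since $\pi\colon\G\to V_1\simeq \G/[\G,\G]$ is the Lie group homomorphism onto the abelian quotient, it intertwines left translation on $\G$ with ordinary translation on $V_1$. Under the canonical identification $\D_g\simeq V_1$ via $(L_{g^{-1}})_*$, the horizontal velocity $(L_{\gamma^{-1}(s)})_*\dot{\gamma}(s)$ equals $\pi_*\dot{\gamma}(s)=\tfrac{d}{ds}\pi(\gamma(s))$. Writing $c(s):=\pi(\gamma(s))$, the integral in \eqref{eq:in-met-char} reduces to $\langle v,\,c(t)-c(-t)\rangle$. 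Moreover $\pi$ is a sub-Riemannian submersion onto Euclidean $V_1$, hence $1$-Lipschitz, so the metric-line hypothesis gives $|c(t)-c(-t)|\leq 2t$ and consequently $v_t:=(c(t)-c(-t))/(2t)$ lies in the closed unit ball of $V_1$. The proposition therefore reduces to showing that $v_t$ converges to a unit vector $v\in V_1$; that $v$ then realizes \eqref{eq:in-met-char} by Cauchy--Schwarz.

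Second, I would extract the asymptotic direction by Carnot rescaling. Given $t_n\to\infty$, set $\gamma_n(s):=\delta_{1/t_n}\gamma(t_n s)$ on $[-1,1]$. By $\delta_\lambda$-homogeneity of the sub-Riemannian distance, each $\gamma_n$ is a unit-speed length-minimizing horizontal curve of length $2$ passing through a point converging to the identity of $\G$. An Ascoli-type extraction produces a uniform subsequential limit $\gamma_\infty\colon[-1,1]\to\G$ that is again a minimizing horizontal curve, and the compatibility $\pi\circ\delta_\lambda=\lambda\,\pi$ shows that every subsequential limit of $v_{t_n}$ has the form $\tfrac{1}{2}\bigl(\pi(\gamma_\infty(1))-\pi(\gamma_\infty(-1))\bigr)$. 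Two tasks remain: (i) prove this vector has norm one, which I would obtain from a rigidity principle---the $\pi$-image of any length-$2$ minimizer in $\G$ must be a Euclidean segment of maximal length, for otherwise one could replace it by the shorter chord plus vertical corrections to obtain a horizontal curve strictly shorter than $2$, contradicting minimality; and (ii) show the limit is independent of the subsequence, via a concatenation argument, as two distinct unit asymptotic directions would force the $1$-Lipschitz curve $c$ to reverse on arbitrarily long intervals, violating the saturation from (i).

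The main obstacle is exactly the rigidity packaged in (i) and (ii). The soft $1$-Lipschitz bound $|v_t|\leq 1$ holds for \textit{any} horizontal curve; upgrading it to $|v_t|\to 1$ and to uniqueness of the asymptotic direction genuinely requires that $\gamma$ be globally minimizing as a horizontal curve, so that any deficit in the first-stratum component cannot be recovered by higher-stratum shortcuts. Once this rigidity of metric lines under abelianization is in place, choosing $v$ to be the common limit of $v_t$ yields \eqref{eq:in-met-char}.
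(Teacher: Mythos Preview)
First, a framing remark: the present paper does not prove this proposition; it is imported verbatim from \cite{bravododdoligeltype} and used as a black box. There is therefore no in-paper argument to compare yours against, and the proposal has to be judged on its own merits.

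Your reduction is clean and correct. The integrand is $\tfrac{d}{ds}\langle v,c(s)\rangle$ with $c=\pi\circ\gamma$, so the statement is exactly that $v_t:=(c(t)-c(-t))/(2t)$ converges to some unit $v\in V_1$; the bound $|v_t|\le 1$ comes for free from $\pi$ being a submetry. Setting up the blow-down $\gamma_n(s)=\delta_{1/t_n}\gamma(t_n s)$ and extracting a limiting minimizer is also the right move.

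The genuine gap is your rigidity step (i). The assertion that ``the $\pi$-image of any length-$2$ minimizer in $\G$ must be a Euclidean segment of maximal length'' is false already in the first Heisenberg group: the minimizing geodesic from $(0,0,0)$ to $(0,0,z)$ with $z\ne 0$ projects to a full circle in $V_1$, so its $\pi$-chord has length zero while the curve has positive length. Your proposed fix---``replace by the shorter chord plus vertical corrections''---is precisely what sub-Riemannian geometry forbids: manufacturing displacement in $[\G,\G]$ costs horizontal length, and there is no bound on that cost in terms of the $V_1$-deficit alone. Restricting the claim to the blow-down limit $\gamma_\infty$ does not rescue the argument as written either: $\gamma_\infty$ is again a metric line, but metric lines in higher-step Carnot groups (e.g.\ the homoclinic geodesics that are the very subject of this paper) do \emph{not} project to unit-speed straight segments on finite intervals. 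What actually forces the blow-down to project to a line is structural information about iterated asymptotic cones in Carnot groups---roughly, each blow-down strips off a layer of the stratification, so after finitely many steps one lands in the abelianization---and this is real input (in the spirit of \cite{hakavuori2023blowups}), not a consequence of the chord-replacement heuristic. Your step (ii) inherits the same defect: without (i) there is no ``saturation'' to violate, and the concatenation sketch does not by itself exclude two distinct unit subsequential directions.
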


Conjecture \ref{conj} implies the necessary condition from Proposition \ref{prop:met-char}.

\subsubsection{The Jet Space}\label{subsec:jet-spa-car}

We will construct $\mathcal{J}^k(\R,\R^n)$ and present its sub-Riemannian structure here, consult \cite{warhurstjet} for more details. Let  $f(x)$ and $g(x)$ be two smooth curves. We define an equivalence relation on the set of germs of smooth curves at $x_0 \in \R$ by the following relation:
$$ f(x) \sim_{x_0} g(x) \;\;\;\text{if and only if}\;\;\; ||f(x)-g(x)|| = O(|x-x_0|^{k+1}).  $$
The $k$-jet space is the set of equivalence classes given by:  $$ \mathcal{J}^k(\R,\R^n) = \bigcup_{x\in \R} C^k(\R,\R^n)/_{\sim_{x}}.$$
Elements in $\mathcal{J}^k(\R,\R^n)$ will be denoted by $j^k_{x}(f)$.

The space $\kJ$ admits global coordinates; we determine an element $j^k_{x_0}(f)$ for the point $x_0$, a list of $n$ functions, and their $k$-derivatives. Then, the point $j^k_{x_0}(f)$ will be denoted by:
\begin{equation*}
    \begin{split}
        j^k_{x_0}(f) =& (x_0,\mathbf{u}^k,\cdots,\mathbf{u}^0),  \;\text{where}\; \mathbf{u}^i = (u^i_1,\cdots,u^i_n)\;\text{for all}\; i=0,\cdots,k,\\
        & \;\text{and}\; u^i_\ell = \frac{d^if_\ell}{dx^i}(x_0) \;\text{for all}\; \ell=1,\cdots,n\;\text{and}\;i=0,\cdots,k . \\
    \end{split}
\end{equation*}
%The notation $\mathbf{u}^k = (u_1,\cdots,u_n)$ will simplify some expressions below. 

The $k$-jets space $\kJ$ has a natural $(n+1)$-rank distribution $\D$ defined by the following set of Pfaffian equations:
$$ 0 = du^i_\ell - u^{i+1}_\ell dx, \;\text{for all}\;i=0,\cdots,k-1,\;\text{and}\;\ell=1,\cdots,n. $$
The following vector fields provide a global frame for the distribution $\D$:
\begin{equation}\label{eq:lef-inv-frame}
    \begin{split}
        X & := \frac{\partial}{\partial x} + \sum_{\ell=1}^n\sum_{i=0}^{k-1}  u^{i+1}_\ell\frac{\partial}{\partial u^{i}_\ell}, \;\;
        Y_\ell  := \frac{\partial}{\partial u^{k}_\ell}, \;\text{where}\; \ell=1,\cdots,n.
    \end{split}
\end{equation}
To endow $\kJ$ with a \sR metric, we consider the quadratic form:
$$ds^2=\big(dx^2 +(du_1^k)^2 + \cdots+(du_n^k)^2\big)\Big|_{\D}.  $$
The canonical projection $\pi:\kJ\to \R^{n+1}$ is given in coordinates by $\pi(j_x^k(f)) = (x_,\mathbf{u}^k)$. In the appendix, we discuss the Lie algebra and Carnot structure of $\kJ$, as well as the symmetries on $\kJ$ arising from the automorphisms of the Lie algebra.

\subsubsection{Construction of \sR geodesics}\label{sss:geo-con}
We present a correspondence between vector-valued functions in $\R^n$ and \sR geodesics in $\kJ$. 

Let $P_{\mu}(x)$ be a vector-valued function in $\R^n$, whose entries are polynomial functions of degree $k$ dependening on the parameter $\mu \in \R^{kn}$, i.e., 
$$P_{\mu}(x) = (P_1(x;\mu),\cdots,P_n(x;\mu)),$$
where the entries have the form:
\begin{equation*}\label{eq:pol-vec}
    P_\ell(x;\mu) = a^0_\ell+a^1_\ell x + \cdots + a^k_\ell x^k,\;\text{for all}\;\ell=1,\cdots,n.
\end{equation*}
Here $\mu = (\mathbf{a}^0,\cdots,\mathbf{a}^k) \in \R^{k(n+1)}$, and $\mathbf{a}^i = (a_1^i,\cdots,a_n^i)$.  Observe that the mapping $ \mu \to P_{\mu}(x)$ defines a linear bijection between $\R^{kn}$ and the set of vector-valued functions $P_{\mu}(x)$. Thus, determining $P_{\mu}(x)$ is equivalent to prescribing the value of $\mu$.

Let $T^*\Ho$ be the cotangent bundle of $\R$ with the canonical coordinates $(p_x,x)$. The vector-valued function $P_{\mu}(x)$ defines a reduced Hamiltonian $H_{\mu}:T^*\R \to \R$ as follows:
\begin{equation}\label{eq:red-ham}
   H_{\mu}(p_x,x) = \frac{1}{2}p_x^2 + \frac{1}{2}V_{\mu}(x),\;\text{where}\; V_{\mu}(x) = ||P_{\mu}(x)||^2. 
\end{equation}
Here $||\cdot||$ is the Euclidean norm in $\R^n$. The dynamics of the $x$-component take place in a closed interval called the hill interval. We introduce its formal definition below.
\begin{definition}\label{def:hill-interval}
   A hill interval for a polynomial potential $V_{\mu}(x)$ is a closed interval $I_{\mu}$ with the property that $V_{\mu}(x)<1$ if $x\in \Int I_{\mu}$ and $V_{\mu}(x)=1$ if $x\in \partial I_{\mu}$.
\end{definition}

    \begin{figure}[h!]
    \centering
        \begin{subfigure}{0.405\textwidth}
            \centering
            \includegraphics[width=\linewidth]{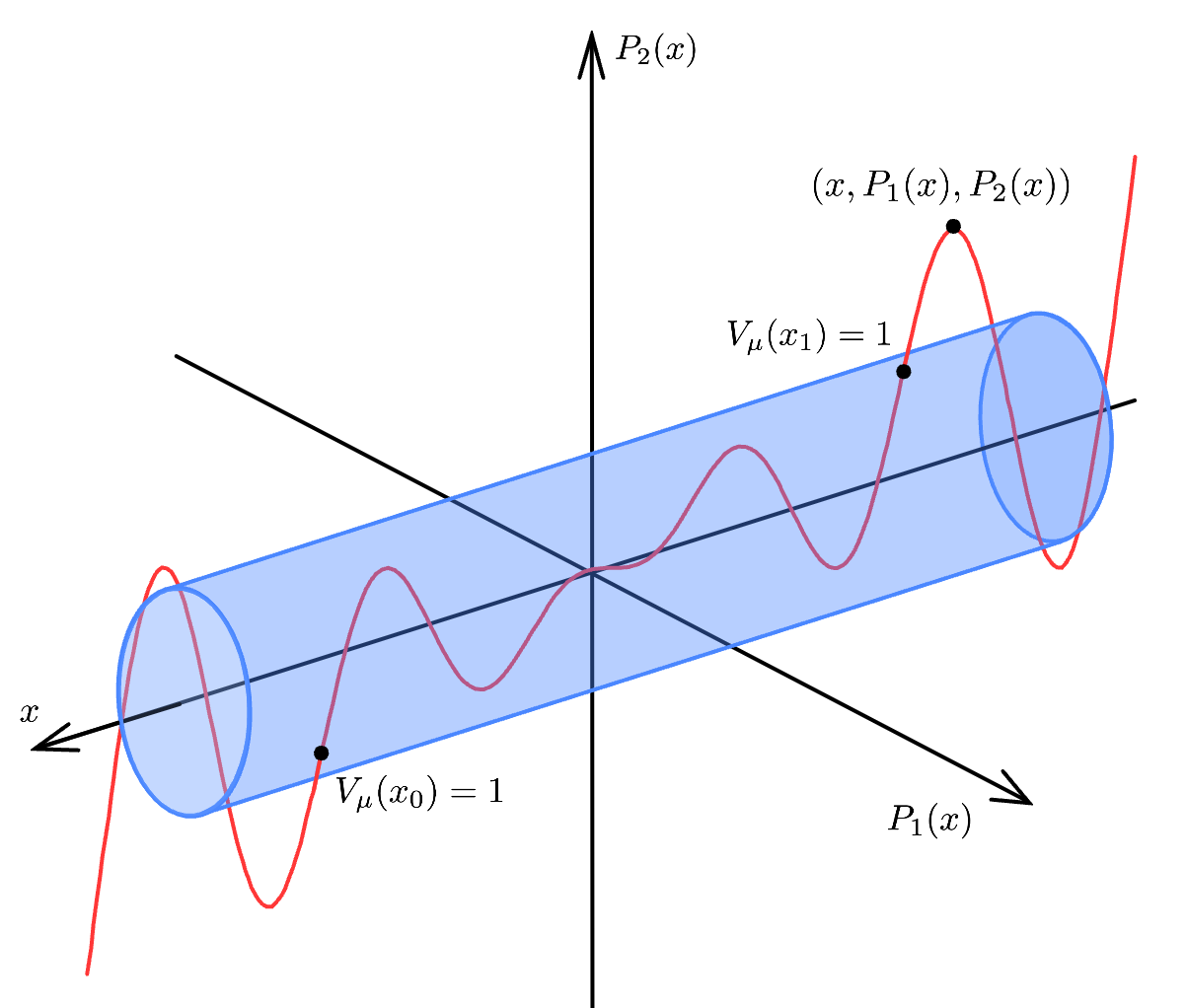}
            \caption{A hill interval for the vector-valued $P_{\mu}(x)$ corresponds to the interval on which the curve $\{(x,P_{\mu}(x)):x\in\R\}$ enters the cylinder of radius 1 around the $x$-axis.}
        \end{subfigure}\qquad
        \begin{subfigure}{0.405\textwidth}
            \centering
            \includegraphics[width=\linewidth]{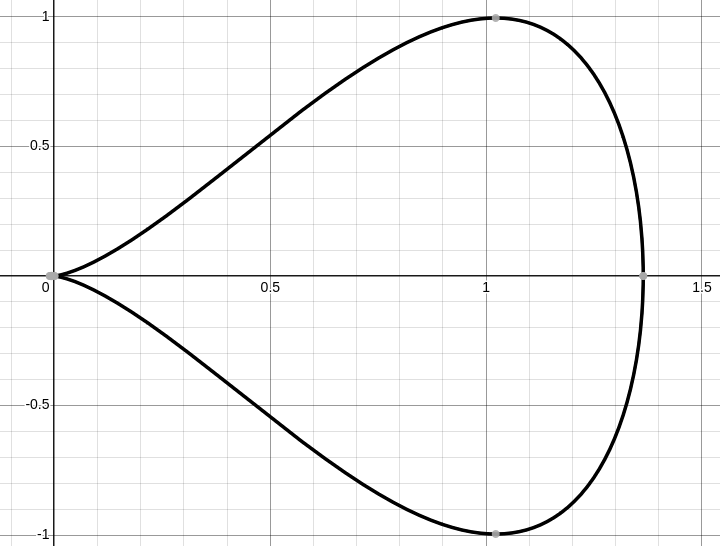}
            \caption{A singular algebraic curve on the plane $(p_x,x)$ defined by $H_{\mu}^{-1}(\frac{1}{2})|_{I_{\mu}}$. }
        \end{subfigure}
        \caption{The panels illustrate the hill interval and the algebraic curves associated with momentum $\mu$.}
        \label{fig:hill}
    \end{figure}

% \begin{figure}
%     \centering
%     \includegraphics[width=0.4\linewidth]{hill_curve.png}
%     \caption{Miriam.}
%     \label{fig:hill}
% \end{figure}

We prescribe a two-step method to construct a normal sub-Riemannian geodesic: first, find a solution $(p_x(t),x(t))$ to the reduced Hamiltonian system with the energy condition $H(p_x(t),x(t)) = \frac{1}{2}$. Second, define a horizontal curve $\gamma(t) \in \kJ$ by solving the differential equation:
\begin{equation}\label{eq:hor-lif}
     \dot{\gamma}(t) = p_x(t)X + P_1(x(t);\mu)Y_1+\cdots+ P_n(x(t);\mu)Y_n,
\end{equation}
where $\{X,Y_1,\dots,Y_n\}$ is the frame of left-invariant vector fields as defined in Eq. \eqref{eq:lef-inv-frame}.
It is important to note that the condition $H_{\mu} = \frac{1}{2}$ implies that $\gamma(t)$ is parametrized by arc length. Moreover, the curve is defined for all $t \in \R$ by virtue of the reduced Hamiltonian system being complete. The following result states that $\gamma(t)$ is a \sR geodesic. 

\begin{thm}[Theorem 1.2, \cite{BravoDoddoli2024}]
    The above prescription yields a normal geodesic in $\kJ$, parameterized by arc length and with momentum $\mu$. Conversely, every arc length parameterized normal geodesic in $\mathcal{J}^k(\R,\R)$ arises via this prescription by applying it to the vector-valued function $P_{\mu}(x)$.
\end{thm}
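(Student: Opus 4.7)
The plan is to run the normal Hamiltonian flow on $T^*\kJ$ associated with the left-invariant sub-Riemannian structure of Section \ref{subsec:jet-spa-car} and show that, after peeling off conserved data, it reduces to the one-degree-of-freedom Hamiltonian system governed by $H_\mu$. Write a covector on $\kJ$ in the dual basis to the frame $\{X, Y_1, \ldots, Y_n\}$ of Eq. \eqref{eq:lef-inv-frame} and denote the resulting fiber-linear symbols by $P_X = \langle p, X \rangle$ and $P_{Y_\ell} = \langle p, Y_\ell \rangle$. In the canonical coordinates $(x, u^i_\ell; \pi_x, \xi^i_\ell)$ on $T^*\kJ$ these become $P_X = \pi_x + \sum_{\ell=1}^n \sum_{i=0}^{k-1} u^{i+1}_\ell \xi^i_\ell$ and $P_{Y_\ell} = \xi^k_\ell$, and the sub-Riemannian Hamiltonian is $H_{sR} = \tfrac{1}{2}(P_X^2 + \sum_\ell P_{Y_\ell}^2)$. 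A normal geodesic is, by definition, the projection to $\kJ$ of an integral curve of $H_{sR}$, and arc length parameterization corresponds to the level $\{H_{sR} = 1/2\}$.

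The core calculation will be to integrate Hamilton's equations for the vertical momenta $\xi^i_\ell$. Since $H_{sR}$ depends on the coordinate $u^i_\ell$ only through the single term $u^{i+1}_\ell \xi^i_\ell$ sitting inside $P_X$, one reads off $\dot \xi^0_\ell = 0$ (hence $\xi^0_\ell \equiv a^0_\ell$ is conserved) and $\dot \xi^i_\ell = -P_X\, \xi^{i-1}_\ell$ for $1 \le i \le k$. Together with $\dot x = \partial_{\pi_x} H_{sR} = P_X$ this yields the exact differential relation $d\xi^i_\ell = -\xi^{i-1}_\ell\, dx$, which I would integrate inductively in $i$ to obtain polynomial identities $\xi^i_\ell(t) = Q^i_\ell(x(t))$ with $\deg Q^i_\ell = i$. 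At the top of the tower, $P_{Y_\ell}(t) = \xi^k_\ell(t) = P_\ell(x(t); \mu)$, where $\mu \in \R^{(k+1)n}$ packages the integration constants exactly in the form of Eq. \eqref{eq:pol-vec}. Substituting back, $H_{sR} = \tfrac{1}{2} P_X^2 + \tfrac{1}{2}\|P_\mu(x)\|^2$, and on the reduced phase space coordinatized by $(x, p_x)$ with $p_x := P_X$ this is precisely the Hamiltonian $H_\mu$ of Eq. \eqref{eq:red-ham}.

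This establishes the forward direction: starting from a solution $(p_x(t), x(t))$ of $H_\mu$ on the level $\{H_\mu = 1/2\}$, the remaining Hamilton equations for the $u$-coordinates read $\dot u^i_\ell = p_x\, u^{i+1}_\ell$ for $i < k$ and $\dot u^k_\ell = P_\ell(x; \mu)$, which together reproduce the lifting ODE \eqref{eq:hor-lif}, so the resulting curve $\gamma(t) \subset \kJ$ is a normal geodesic with momentum $\mu$, parametrized by arc length. For the converse, I would start with an arbitrary arc length parametrized normal geodesic $\gamma$, invoke the Pontryagin maximum principle to secure a cotangent lift, and then apply the same triangular integration to the lifted momenta to recover a unique $\mu$ realizing $P_{Y_\ell}(t) = P_\ell(x(t); \mu)$; feeding this $\mu$ into the prescription returns $\gamma$. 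The main point to nail down carefully is the triangular ODE structure of the $\xi^i_\ell$ equations, which hinges on the specific form of $X$ in \eqref{eq:lef-inv-frame}; once that structure is in hand, the polynomial dependence on $x$ and the emergence of $H_\mu$ are mechanical.
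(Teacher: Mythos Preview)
The paper does not supply its own proof of this statement: it is quoted verbatim as Theorem~1.2 of \cite{BravoDoddoli2024}, and Appendix~\ref{ap:lie-alg} merely records that the reduced Hamiltonian $H_\mu$ arises via symplectic reduction of $T^*\kJ$ by the maximal abelian normal subgroup $\Ag$, again deferring to \cite{BravoDoddoli2024} for details. So there is no in-text proof to compare against.

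That said, your argument is correct and stands on its own. What you are doing is, in effect, carrying out the symplectic reduction by hand in canonical coordinates: the triangular system $\dot\xi^0_\ell=0$, $\dot\xi^i_\ell=-P_X\,\xi^{i-1}_\ell$ encodes the momentum map for the $\Ag$-action, and the integration constants you collect into $\mu$ are precisely the values of that momentum map. The cited reference packages this as a quotient of symplectic manifolds (Marsden--Weinstein reduction for the abelian symmetry group $\Ag$), which is cleaner conceptually and makes manifest why the reduced phase space is $T^*\R$ with coordinates $(p_x,x)$; your approach is more elementary and self-contained, and has the advantage that the polynomial form $P_{Y_\ell}(t)=P_\ell(x(t);\mu)$ falls out of a short inductive integration rather than from the general structure theory of metabelian Carnot groups. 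Both routes arrive at the same place; yours is simply the coordinate unwinding of theirs.
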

The observations made in \cite[Remark 2.2]{BravoDoddoli+2024} about an equivalent theorem in the framework of the jet space of functions $\mathcal{J}^k(\R,\R)$ are equally applicable in our framework.

The Hamiltonian $H_{\mu}$ defines a Hamiltonian system obtained through the symplectic reduction of the geodesic flow (see Appendix \ref{ap:lie-alg}). Throughout this paper, we refer to this system as the reduced system or reduced dynamics. The classification of sub-Riemannian geodesics employed in the sequence method is based on the behavior of their reduced dynamics. To this end, we now characterize the system's equilibrium points.
\begin{proposition}\label{prp:red-dy-eq}
    Consider a point $(p_x,x) \in H_{\mu}^{-1}(\frac{1}{2})$. Then, $(p_x,x)$ is an equilibrium point of the Hamiltonian system if and only if $p_x = 0$ and $x$ satifies $||P_{\mu}(x)|| = 1$ and  $P_{\mu}(x)\perp P_{\mu}'(x)$. %$P_{\mu}(x) \cdot P'_{\mu}(x) = 0$,
\end{proposition}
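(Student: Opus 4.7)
The plan is to carry out the standard equilibrium analysis for the reduced Hamiltonian system and combine it with the prescribed energy level. First I would write down Hamilton's equations for $H_{\mu}$ defined in Eq.~\eqref{eq:red-ham}. Since $H_{\mu}(p_x,x) = \tfrac12 p_x^2 + \tfrac12 V_{\mu}(x)$ with $V_{\mu}(x) = \|P_{\mu}(x)\|^2$, the equations are
\begin{equation*}
\dot x = \frac{\partial H_{\mu}}{\partial p_x} = p_x, \qquad \dot p_x = -\frac{\partial H_{\mu}}{\partial x} = -\tfrac12 V_{\mu}'(x) = -\langle P_{\mu}(x), P_{\mu}'(x)\rangle,
\end{equation*}
where the last equality follows from differentiating $\|P_{\mu}(x)\|^2$ using the Euclidean inner product on $\mathbb{R}^n$.

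Next I would read off the equilibrium condition. A point $(p_x,x)$ is an equilibrium if and only if both $\dot x = 0$ and $\dot p_x = 0$. The first equation yields $p_x = 0$, and the second yields $\langle P_{\mu}(x), P_{\mu}'(x)\rangle = 0$, i.e.\ $P_{\mu}(x) \perp P_{\mu}'(x)$. Then I would impose the energy condition $(p_x,x) \in H_{\mu}^{-1}(\tfrac12)$: substituting $p_x = 0$ into $\tfrac12 p_x^2 + \tfrac12 \|P_{\mu}(x)\|^2 = \tfrac12$ gives $\|P_{\mu}(x)\|^2 = 1$, hence $\|P_{\mu}(x)\| = 1$. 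This proves the forward implication.

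For the converse, suppose $(0,x)$ satisfies $\|P_{\mu}(x)\| = 1$ and $P_{\mu}(x) \perp P_{\mu}'(x)$. Then $H_{\mu}(0,x) = \tfrac12\cdot 0 + \tfrac12 \cdot 1 = \tfrac12$, so the point lies on the prescribed energy level. Plugging $p_x = 0$ into the first Hamilton equation gives $\dot x = 0$, and the orthogonality condition gives $\dot p_x = -\langle P_{\mu}(x),P_{\mu}'(x)\rangle = 0$, so $(0,x)$ is indeed an equilibrium.

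There is no real obstacle here: the statement is a direct consequence of Hamilton's equations and the energy normalization $H_{\mu} = \tfrac12$, and the only small observation to record is that $\tfrac12 \tfrac{d}{dx}\|P_{\mu}(x)\|^2 = \langle P_{\mu}(x), P_{\mu}'(x)\rangle$, which connects the potential's critical points with the geometric orthogonality condition between the polynomial curve and its tangent.
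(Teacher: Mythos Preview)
Your proposal is correct and follows essentially the same route as the paper: the paper simply observes that an equilibrium occurs precisely when $\nabla H_{\mu}=(p_x,\,P_{\mu}(x)\cdot P_{\mu}'(x))=(0,0)$, and then uses the energy level $H_{\mu}=\tfrac12$ with $p_x=0$ to obtain $\|P_{\mu}(x)\|=1$. Your version spells out Hamilton's equations and the converse a bit more explicitly, but the argument is identical in substance.
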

\begin{proof}
    A point $(p_x,x) \in H_{\mu}^{-1}(\frac{1}{2})$ is an equilibrium point if and only if  
    $$(0,0) =\nabla H_{\mu} = (p_x,P_{\mu}(x) \cdot P'_{\mu}(x)),$$ where $P'_{\mu}(x) = \frac{d}{dx} P_{\mu}(x)$ and $v_1\cdot v_2$ is the Euclidean product in $\R^n$. The condition $(0,0) =\nabla H_{\mu}$ implies $p_x = 0$. Hence, $(p_x,x) = (0,x) \in H_{\mu}^{-1}(\frac{1}{2})$ if and only if $||P_{\mu}(x)|| =1$. 
\end{proof}

It is important to note that the equilibrium points of the reduced Hamiltonian system correspond to the singular points of the algebraic curve $H_{\mu}^{-1}(\frac{1}{2})$. This observation will play a fundamental role later (see Proposition \ref{prp:sin-point}, and figure \ref{fig:hill}).

\subsubsection{Classification of \SR Geodesics}\label{sub-sec:class-geo}
We now classify the sub-Riemannian geodesics by their reduced dynamics. According to the classical theory of one-degree-of-freedom systems, the sub-Riemannian geodesics fall into one of the following types:

\textbf{Line geodesics:} A geodesic $\gamma(t)$ is of the line type if $\pi(\gamma(t))$ is a straight line in $\R^{n+1}$. Line geodesics correspond to a constant polynomial $P_{\mu}(x) = \textbf{c}$. The hill interval $I_{\mu} = \R$ if $||\textbf{c}|| < 1$ or $I_{\mu}$ is a singleton if $||\textbf{c}|| = 1$. 

\textbf{Periodic geodesics:} A geodesic $\gamma(t)$ is of the periodic type with hill interval $I_{\mu}$ if the reduced dynamics are periodic. The reduced dynamics are periodic if and only if $H_{\mu}^{-1}(\frac{1}{2})|_{I_{\mu}}$ is smooth.  

\textbf{Homoclinic geodesics:} A geodesic $\gamma(t)$ is of the homoclinic type with hill interval $I_{\mu}$ if the reduced dynamics have a homoclinic orbit. The reduced dynamics have a homoclinic orbit if and only if $H_{\mu}^{-1}(\frac{1}{2})|_{I_{\mu}}$ only contains a singular point.

\textbf{Heteroclinic geodesics:} A geodesic $\gamma(t)$ is of the heteroclinic type with hill interval $I_{\mu}$ if the reduced dynamic has a heteroclinic orbit. The reduced dynamics have a heteroclinic orbit if and only if $H_{\mu}^{-1}(\frac{1}{2})|_{I_{\mu}}$ contains two singular points.

 Eq. \eqref{eq:hor-lif} implies that every homoclinic geodesic satisfies the condition stated in  Conjecture \ref{conj}. However, this does not hold for every heteroclinic geodesic. The following definitions distinguish whether a heteroclinic geodesic satisfies the condition stated in  Conjecture \ref{conj}.

\textbf{Direct geodesics:} A heteroclinic geodesic $\gamma(t)$ is of the direct-type  with hill interval $I_{\mu} = [x_0,x_1]$ if and only if  $ P_{\mu}(x_0) = P_{\mu}(x_1)$.

\textbf{Turn-back geodesics:} A heteroclinic geodesic $\gamma(t)$ is of the Turn-back type with hill interval $I_{\mu} = [x_0,x_1]$ if and only if  $ P_{\mu}(x_0) \neq  P_{\mu}(x_1)$.

    \begin{figure}[h!]
    \centering
        \begin{subfigure}{0.38\textwidth}
            \centering
            \includegraphics[width=\linewidth]{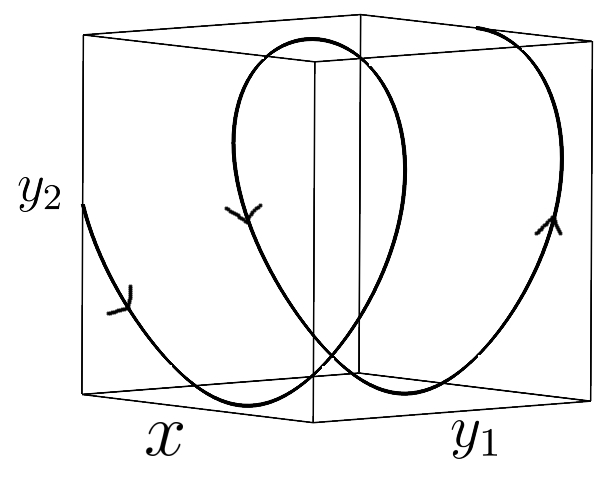}
            \caption{Periodic.}
        \end{subfigure}\qquad
        \begin{subfigure}{0.34\textwidth}
            \centering
            \includegraphics[width=\linewidth]{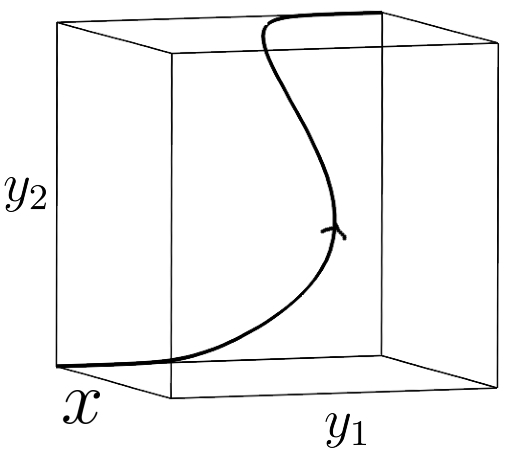}
            \caption{Homoclinic. }
        \end{subfigure}
        \qquad
        \begin{subfigure}{0.34\textwidth}
            \centering
            \includegraphics[width=\linewidth]{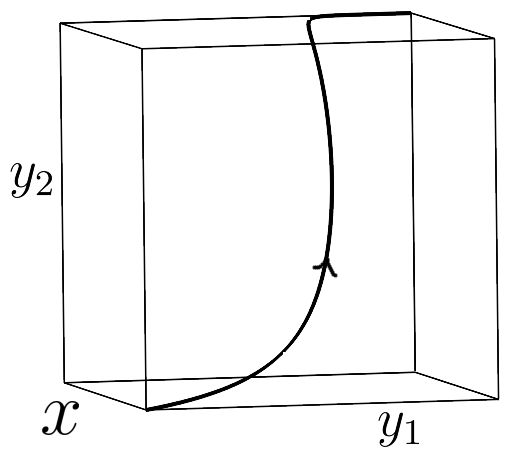}
            \caption{Direct-type. }
        \end{subfigure}
        \qquad
        \begin{subfigure}{0.38\textwidth}
            \centering
            \includegraphics[width=\linewidth]{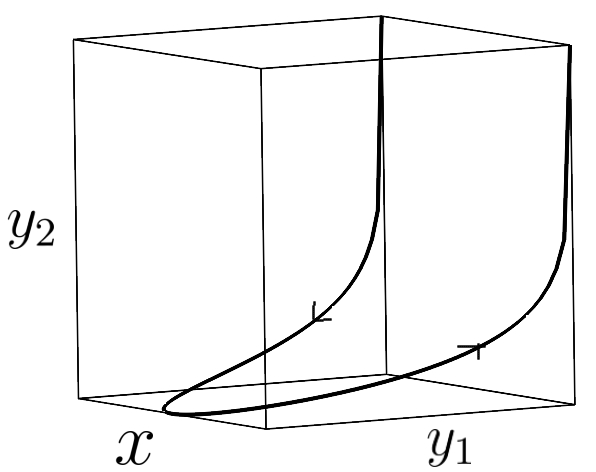}
            \caption{Turn-back. }
        \end{subfigure}
        \caption{The panel displays the projection of the different types of geodesics in $\PJ$  to $\R^3$, with coordinates $(x,y_1,y_2)$}
        \label{fig:geosic examples}
    \end{figure}

% \begin{figure}
%     \centering
%     \includegraphics[width=0.4\linewidth]{periodic}
%     \quad
%     \includegraphics[width=0.35\linewidth]{homoclinic}
%     \quad
%     \includegraphics[width=0.35\linewidth]{direct-type}
%     \quad
%     \includegraphics[width=0.4\linewidth]{turn-back}
%     \caption{}
%     \label{fig:geosic examples}
% \end{figure}

The following result characterizes the abnormal geodesics in $\Je$.
\begin{proposition}\label{prp:abn-geo}
  Let $2\leq k$. The singular curves in $\kJ$ are the precisly the curves $\gamma(t)$ with the property that
  $$ \dot{\gamma}(t) \in \mathrm{span}\{ Y_1,\cdots,Y_n\},\;\text{whenever}\;\dot{\gamma}(t)\;\text{exist}.$$
  Consequently, the abnormal geodesics in $\kJ$ are geodesic lines whose $x$-component is constant. 
\end{proposition}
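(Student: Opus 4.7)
The plan is to apply Pontryagin's maximum principle with the left-invariant frame $\{X, Y_1, \ldots, Y_n\}$ from \eqref{eq:lef-inv-frame}, characterize abnormal extremals by a non-zero co-vector lift $p(t) \in \D^\perp$ along $\gamma$, and then exploit the triangular Lie-bracket structure $[X, \partial_{u^j_\ell}] = -\partial_{u^{j-1}_\ell}$ for $1 \leq j \leq k$ to force $p \equiv 0$ whenever $\dot\gamma$ has a non-zero $X$-component.

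First I would write $\dot\gamma = u_0 X + \sum_\ell u_\ell Y_\ell$, observe that $p \in \D^\perp$ along $\gamma$ is equivalent to the linear constraints $p_{u^k_\ell} = 0$ for every $\ell$ together with $p_x + \sum_{i,\ell} u^{i+1}_\ell p_{u^i_\ell} = 0$, and record the abnormal Hamilton equations, which reduce to $\dot p_{u^j_\ell} = -u_0 p_{u^{j-1}_\ell}$ for $1 \leq j \leq k$ and $\dot p_{u^0_\ell} = \dot p_x = 0$. On any open subinterval $I$ where $u_0 \neq 0$, differentiating the identity $p_{u^k_\ell} \equiv 0$ along $\gamma$ and invoking $[X, Y_\ell] = -\partial_{u^{k-1}_\ell}$ yields $p_{u^{k-1}_\ell} \equiv 0$ on $I$; iterating this descent $k$ times produces $p_{u^j_\ell} \equiv 0$ for every $0 \leq j \leq k$, and then the remaining constraint forces $p_x = 0$. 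Since the adjoint equations are linear in $p$, vanishing on an open interval propagates globally along $\gamma$, contradicting $p \not\equiv 0$; hence $u_0 \equiv 0$ and the forward inclusion holds.

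For the converse I would assume $\dot\gamma(t) \in \mathrm{span}\{Y_1, \ldots, Y_n\}$, so that only the coordinates $u^k_\ell$ evolve along $\gamma$, while $x$ and every $u^j_\ell$ with $j < k$ are constant. An explicit non-zero constant covector of the form $p = a\, dx + \sum_{0 \leq i \leq k-2,\, \ell} b^i_\ell\, du^i_\ell$, with $b^{k-1}_\ell = b^k_\ell = 0$ and $a$ fixed by the constraint $\langle p, X \rangle = 0$ evaluated at the frozen values of the lower-order coordinates, lies in $\D^\perp$ along $\gamma$ and is preserved by the abnormal flow; the hypothesis $k \geq 2$ ensures that at least one free index $i \in \{0, \ldots, k-2\}$ is available, so such a non-zero $p$ exists and certifies $\gamma$ as singular. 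The concluding assertion about abnormal geodesics then follows because $\mathrm{span}\{Y_1, \ldots, Y_n\}$ is involutive and its integral leaves carry the flat Euclidean metric $\sum_\ell (du^k_\ell)^2$; a locally length-minimizing singular curve must be a Euclidean straight line within such a leaf, which projects to a line in $\R^{n+1}$ with constant $x$-component.

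I expect the main technical obstacle to be the iterated differentiation in the forward direction: it must be carried out on a maximal open set where $u_0 \neq 0$ and then extended globally by linearity of the adjoint flow, and this is precisely the step where the assumption $k \geq 2$ enters, since the final descent $[X, \partial_{u^1_\ell}] = -\partial_{u^0_\ell}$ requires at least two jet levels below the top layer $\{u^k_\ell\}$.
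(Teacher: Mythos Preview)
Your argument is correct and follows the same route as the paper's proof: Pontryagin's maximum principle together with an iterated-bracket descent that forces the covector to vanish on any interval where the $X$-control is non-zero. You work with coordinate momenta $p_{u^j_\ell}$ while the paper uses the left-invariant momentum functions $P_{Y_\ell^i}$, but these coincide after the re-indexing $j=k-i$, and your explicit construction of a non-zero abnormal covector in the converse direction is actually more careful than the paper's (which essentially omits that direction). One small slip in your closing paragraph: the hypothesis $k\geq 2$ is not needed for the forward descent, which works for any $k\geq 1$; it enters only in the converse, to guarantee a free index $i\in\{0,\ldots,k-2\}$ for the coefficient $b^i_\ell$, exactly as you already observe when building the covector.
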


%\subsection{\SR Geodesics in the Jet Space of Curves}

\subsection{The two-jet space of plane curves}\label{subsec:two-jet}

For the remainder of the paper, we focus on $\PJ$. We begin by classifying its singular geodesics.
\begin{proposition}
The only singular geodesics in $\PJ$ are homoclinic.
\end{proposition}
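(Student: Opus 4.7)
My plan is to invoke Proposition \ref{prp:abn-geo} to characterize singular geodesics explicitly, normalize via the symmetries of $\PJ$, and then exhibit a nondegenerate momentum $\mu$ for which the reduced Hamiltonian system hosts the singular geodesic as the equilibrium of a homoclinic orbit. This will place it in the homoclinic class of Section \ref{sub-sec:class-geo}.

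Let $\gamma$ be a singular geodesic in $\PJ$. By Proposition \ref{prp:abn-geo}, $\dot{\gamma}(t)\in\mathrm{span}\{Y_1,Y_2\}$ whenever $\dot{\gamma}$ is defined, so $x(t)\equiv x_0$ is constant along $\gamma$. Because $Y_1,Y_2$ commute and are orthonormal for $ds^2|_{\D}$, the restriction of the sub-Riemannian metric to the integral $2$-plane through $\gamma(0)$ is Euclidean, and local length-minimality then forces $\dot{\gamma}(t)=a_1 Y_1+a_2 Y_2$ with constants $a_1^2+a_2^2=1$. Using the Carnot translation sending $\gamma(0)$ to the identity together with an orthogonal rotation in the $(u_1,u_2)$-plane (a symmetry of $\PJ$ recorded in Appendix \ref{ap:lie-alg}), I reduce to the normalized case $x_0=0$ and $(a_1,a_2)=(1,0)$.

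Now consider the momentum with $P_\mu(x)=\bigl(1-cx^2,\,0\bigr)$ for some $c>0$. Then $V_\mu(x)=(1-cx^2)^2$ satisfies $V_\mu(0)=1$, $V_\mu'(0)=0$, and $V_\mu''(0)=-4c<0$. By Proposition \ref{prp:red-dy-eq}, $(0,0)$ is an equilibrium of the reduced system on $H_\mu^{-1}(\frac{1}{2})$, and the sign of $V_\mu''$ makes it a saddle. The hill interval $I_\mu=[0,\sqrt{2/c}\,]$ has $V_\mu<1$ in its interior and $V_\mu'(\sqrt{2/c}\,)\neq 0$, so $H_\mu^{-1}(\frac{1}{2})|_{I_\mu}$ contains exactly one singular point, namely $(0,0)$. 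By the classification in Section \ref{sub-sec:class-geo}, the reduced dynamics is homoclinic. Substituting the equilibrium solution $(p_x(t),x(t))=(0,0)$ into Eq.~\eqref{eq:hor-lif} gives $\dot{\gamma}(t)=0\cdot X+1\cdot Y_1+0\cdot Y_2$, which is precisely the normalized singular geodesic. Hence $\gamma$ is of homoclinic type.

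The main subtlety I anticipate is conceptual rather than computational: the same $\gamma$ also admits the constant realization $P_\mu\equiv(1,0)$, which would formally place it in the line class with singleton hill interval. The point is that this constant realization has empty-interior hill interval and so does not furnish a genuine one-degree-of-freedom reduced system, while the equilibrium structure automatically excludes the periodic class (a smooth closed orbit on the energy level cannot pass through a singular point of the algebraic curve) and the heteroclinic classes direct and turn-back (which require two distinct singular points on the same hill interval, whereas our construction enforces exactly one). Thus the only nondegenerate classification compatible with Proposition \ref{prp:abn-geo} is homoclinic, as asserted.
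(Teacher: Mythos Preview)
You have misread the statement. In this paper ``singular geodesic'' does \emph{not} mean abnormal (singular in the control-theoretic sense of Proposition~\ref{prp:abn-geo}); it refers to a normal geodesic whose isoenergetic algebraic curve $H_\mu^{-1}(\tfrac12)|_{I_\mu}$ is singular---that is, to the homoclinic and heteroclinic types in the classification of Section~\ref{sub-sec:class-geo}. The proposition is asserting that $\PJ$ admits no heteroclinic (hence no direct or turn-back) geodesics, so that among the ``singular'' classes only homoclinic survives. The paper's proof is a pure degree count: for $\PJ$ the potential $V_\mu=\|P_\mu\|^2$ has degree~$4$, while a heteroclinic hill interval $[x_0,x_1]$ forces $1-V_\mu$ to vanish to order at least~$2$ at both endpoints, together with positivity on the open interval; this is incompatible with degree~$4$ (one needs degree~$\ge 5$). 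Your argument, which invokes Proposition~\ref{prp:abn-geo} and builds a reduced system around the abnormal line, proves a different statement and is not what is being asked.

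Even on its own terms your argument has a gap. You exhibit a momentum with $P_\mu(x)=(1-cx^2,0)$ for which $(p_x,x)=(0,0)$ is the saddle equilibrium on a homoclinic level set, and then declare the constant solution $(p_x(t),x(t))\equiv(0,0)$ to be ``of homoclinic type.'' But in the classification a homoclinic geodesic is one whose reduced dynamics traces the homoclinic \emph{orbit}, not one that sits at the equilibrium; the equilibrium solution is precisely the line geodesic (hill interval a singleton), consistent with Proposition~\ref{prp:abn-geo}. Your final paragraph notices this tension but does not resolve it: the abnormal geodesic is the asymptotic endpoint of homoclinic geodesics, not itself a homoclinic geodesic. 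To prove the proposition as stated you should instead argue, as the paper does, that no degree-$4$ potential of the form $\|P_\mu\|^2$ can support a heteroclinic hill interval.
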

\begin{proof}
    The potential function $V_{\mu}(x)$ corresponding to geodesics in $\PJ$ has degree 4. To admit a geodesic of the type heteroclinic, the potential $V_{\mu}(x)$ must have at least degree 5. Indeed, if $V_{\mu}(x)$ is a potential for a heteroclinic geodesic and hill interval $I_{\mu} = [x_0,x_1]$, then $V_{\mu}(x)$ must admits the factorization $1-V_{\mu}(x) = (x-x_0)^{r_0}(x-x_1)^{r_1}q(x)$ where $2\leq r_0$ and $2 \leq r_1$. If $2= r_0$ and $2 = r_1$, then $V_{\mu}(x)$ must have  local minumum inside $(x_0,x_1)$.
\end{proof}   

%Summarizing the jet space of curves $\PJ$ only admits geodesics of the type line, periodic, and homoclinic.

Our goal is to classify metric lines up to isometries; that is, we will employ the symmetries described in Appendix \ref{subsub:sym} to reduce the number of parameters. First, by applying Carnot translations and dilations, we restrict our attention to homoclinic geodesics with hill interval $I_{\mu} = [0, x_1]$. Second,  using rotational symmetry, we restrict our attention to the asymptotic velocity $v := (0,1,0)$. 

We denote by $\Lambda\subset\R^6$ the subset of momenta corresponding to homoclinic geodesics with the properties mentioned above, i.e., $\mu \in \Lambda$ if and only if the associated vector-valued function $P_{\mu}(x)$ satisfies the following properties: 
    \begin{itemize}
        \item $\mu$ admits a hill interval $I_{\mu} = [0,x(\mu)]$, where  $0< x(\mu)$.

        \item $P_{\mu}(0) = (1,0)$ and $P_{\mu}(0)\cdot P'_{\mu}(0) = 0$.
    \end{itemize}
Let $\mathcal{A}$ be the set whose interior and boundary are defined by
\begin{equation*}
    \begin{split}
       \Int \mathcal{A} & := \{ (\lambda,a,b) \in (0,\infty)\times\R^2: 0 < 2 -b^2,\;\; a \in \R \}, \\
        \partial \mathcal{A} & := \{ (\lambda,a,b) \in (0,\infty)\times\R^2 : b = \pm\sqrt{2}\;\; ab<0  \}.
    \end{split}
\end{equation*}
Then, a bijection $\mu:\mathcal{A} \to \Lambda$ is given by 
\begin{equation}\label{eq:mu-Lambda}
    \mu(\lambda,a,b) = (1,0,0, \frac{b}{\lambda}, \frac{-1}{\lambda^2},\frac{a}{\lambda^2}) := (a^0_1,a^0_2,a^1_1,a^1_2,a^2_1,a^2_2) \in \Lambda .
\end{equation}
 The function $\mu:\mathcal{A} \to \Lambda$ is clearly injective. To see that it is surjective, note that the condition $P_{\mu}(0) = (1,0)$ implies $(a_1^0,a_2^0) = (1,0)$. The condition  $P_{\mu}(0)\cdot P'_{\mu}(0) = 0$ implies that $P'_{\mu}(0)$ is parallel to $(0,1)$, so $a_1^1 = 0$. The condition $0<x(\mu)$ implies  $P_{1}(x;\mu)$ is decreasing for $0<x$, so $a_1^2<0$.  By seting $a_1^2 = \frac{-1}{\lambda^2}$, $a_2^1 = \frac{b}{\lambda}$, and $a_2^2 = \frac{a}{\lambda^2}$, we find that $0<x(\mu)$ if and only if $(\lambda,a,b) \in \mathcal{A}$.
% Moreover, $x(\mu):=\lambda x(a,b) $, where $x(a,b)$ is given by
% \begin{equation*}
%    x(a,b) = \frac{-ab+\sqrt{2-b^2+2a^2}}{1+a^2} .
% \end{equation*}

%
\subsubsection{The Candidate Geodesics}\label{subsubsec:cand-geo}

Here, we will introduce the two families of homoclinic geodesics mentioned in Theorem \ref{the:main}. To define these families, we will specify their momentum $\mu$, in other words, we will define two subsets of $\Lambda$ using Eq. \eqref{eq:mu-Lambda}.

\textbf{ \textit{Family 1}:} If $\mathcal{A}_1 := \mathcal{A}\big|_{b=0}$ and $\Lambda_1 := \mu(\mathcal{A}_1)$, then the geodesics in the \textit{Family 1} corresponding to a momentum $\mu \in \Lambda_1$.

\textbf{ \textit{Family 2}:} If $\mathcal{A}_2 := \mathcal{A}\big|_{a=0}$ and $\Lambda_2 := \mu(\mathcal{A}_2)$, then the geodesics in the \textit{Family 2} corresponding to a momentum $\mu \in \Lambda_2$.

We will devote the rest of the paper to proving Theorem \ref{thm:main-3}, which states that the homoclinic geodesics with momentum  $\mu \in \Lambda_i$, with $i=1,2$, are metric lines. 

Before continuing our study, let us exclude the case $(a,b) = (0,0) \in \mathcal{A}_1\cap\mathcal{A}_2$. The Carnot group $\PJ$ contains a normal subgroup $\mathcal{N}$ such that $\PJ/\mathcal{N} \simeq \mathcal{J}^2(\R,\R)$ (for the explicit construction of this subgroup see Appendix \ref{ap:lie-alg}). It follows that if canonical projection $\pi_{\mathcal{N}}:\PJ/\mathcal{N} \to J^2(\R,\R)$ is a subsumetry map by Corollary 6.3.5 from \cite{le2025metric} . Moreover, if $\gamma(t)$ is geodesic with momentum $\mu = \mu(\mathcal{A}_1\cap\mathcal{A}_2)$, then $\pi_{\mathcal{N}}(\gamma(t))$ is a gedesic in $\mathcal{J}^2(\R,\R)$. This geodesic was studied already in \cite{BravoDoddoli+2024}, where Theorem 1.4 implies that $\pi_{\mathcal{N}}(\gamma(t))$ is a metric line in $\mathcal{J}^2(\R,\R)$. Therefore, Lemma \ref{lemm:hor-lift-met-lin} implies $\gamma(t)$ is a metric line in $\PJ$. Having in mind this discussion, we define the set
$$ \Lambda_0 := \{ \mu = (1,0,0,0,\frac{-1}{\lambda^2},0): \lambda \in (0,\infty)\}.$$
For the rest of the paper, we restrict our attention to geodesics with momentum $\mu\in \Lambda\setminus\Lambda_0$.

\subsection{Magnetic Space}\label{subsec:mag-space}

Let $\gamma_{h}(t)$ be a homoclinic geodesic with momentum $\mu\in \Lambda\setminus \Lambda_0$. Without loss of generality, by applying a Carnot dilatation, we may focus our attention on the case $\lambda = 1$. Thus, for the remainder of the paper, we consider the vector-valued function
$$ P_{(a,b)}(x) := (P_1(x),P_2(x)) = (1-x^2,bx+ax^2),\;\;\text{with}\;\;(a,b) \neq (0,0). $$
We will avoid using the notation $\mu(1,a,b)$ and $P_i(x;\mu)$, since $\mu$ is uniquely determined by $(a,b)$, and we wish to simplify notation.

We will build a \sR magnetic space $\R^5_{(a,b)}$, whose geometry depends on the vector-valued function $P_{(a,b)}(x)$. We also construct a \sR submersion $\pi_{(a,b)}:\kJ \to \R^5_{(a,b)}$ factorizing the submersion $\pi$, i.e, $\pi  = pr \circ \pi_{(a,b)}$ where $pr:\R^5_{(a,b)} \to \R^{3}$ is a submersion. Therefore, by Lemma \ref{lemm:hor-lift-met-lin}, the goal of the sequence method is to prove that the \sR geodesic $c_h(t):=\pi_{(a,b)}(\gamma_{h}(t))$ is a metric line in $\R^5_{(a,b)}$, we will call $c_h(t)$ the candidate geodesic. Let us introduce the formal statement.

\begin{thm}\label{thm:main-3}
    Consider $\mu(1,a,b) \in \Lambda_i\setminus \Lambda_0$, with $i=1,2$, and let $\gamma_h(t)$ be a homoclinic geodesic with momentum $\mu(1,a,b)$. If $c_h(t) := \pi_{(a,b)}(\gamma_h(t))$, then $c_h(t)$ is metric line in $\R^5_{(a,b)}$. As a consequence, $\gamma_h(t)$ is a metric line in $\PJ$.  
\end{thm}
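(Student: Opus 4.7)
The plan is to reduce the two statements of the theorem via Lemma \ref{lemm:hor-lift-met-lin}: since $\pi_{(a,b)}:\PJ \to \R^5_{(a,b)}$ is a sub-Riemannian submersion and $\gamma_h(t)$ is the horizontal lift of $c_h(t)$, it suffices to prove that $c_h(t)$ is a metric line in $\R^5_{(a,b)}$. I would then execute the sequence method in $\R^5_{(a,b)}$. Fix a large $T>0$ and let $\sigma_T$ be a length-minimizing horizontal curve joining $c_h(-T)$ to $c_h(T)$, so $\mathrm{length}(\sigma_T)=\mathrm{dist}_{\R^5_{(a,b)}}(c_h(-T),c_h(T))\le 2T$. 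The goal is to show $\mathrm{length}(\sigma_T)=2T-o(1)$ as $T\to\infty$, from which global minimality of $c_h$ follows. Arguing by contradiction, I would assume a deficit $2T-\mathrm{length}(\sigma_T)\ge\delta>0$ along a subsequence $T_n\to\infty$, and analyse the shape of $\sigma_{T_n}$. Since $c_h$ is homoclinic with a well-defined asymptotic velocity (by Proposition \ref{prop:met-char}), abnormality is ruled out by the analogue of Proposition \ref{prp:abn-geo} in $\R^5_{(a,b)}$ (proved in Appendix \ref{Ap:abn}), so $\sigma_{T_n}$ is a normal geodesic whose reduced dynamics in a hill interval $I_{\mu'_n}$ must be of periodic type.

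The connecting link between $\sigma_{T_n}$ and $c_h$ is the period map $\Pi$ on the set of periodic-type normal geodesics in $\R^5_{(a,b)}$: $\Pi$ associates to a parameter in its 2-dimensional domain the displacement (in the base $\R^3$ of $\pi_{(a,b)}$, together with the appropriate magnetic/abelianized coordinates) produced by one full oscillation of the reduced dynamics. For each $T_n$, the curve $\sigma_{T_n}$ concatenates $N_n\approx T_n/\text{(period)}$ copies of such a fundamental arc plus boundary pieces. Thus the displacement $c_h(T_n)-c_h(-T_n)$ decomposes as $N_n\,\Pi(\mu'_n)+O(1)$. The candidate $c_h$ itself has a corresponding limiting parameter $\mu\in\Lambda_i\setminus\Lambda_0$ (the homoclinic is the boundary of the family of periodic orbits in the same hill interval), and by the chosen asymptotic velocity the displacements forced on $\Pi(\mu'_n)$ converge to the expected homoclinic displacement per period. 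If $\Pi$ is globally one-to-one, this system of constraints forces $\mu'_n\to\mu$, and then continuity of length with respect to parameter yields $\mathrm{length}(\sigma_{T_n})=2T_n-o(1)$, contradicting the assumed deficit.

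The main obstacle, and the crux of the argument, is establishing that $\Pi$ is injective on its 2-dimensional domain. I would apply the Hadamard Global Diffeomorphism Theorem, which requires three ingredients: that $\Pi$ be smooth, that its Jacobian be everywhere non-singular, and that $\Pi$ be proper (equivalently, that the image is simply connected once $\Pi$ is a local diffeomorphism). Smoothness is standard from the Hamiltonian formulation of $H_\mu$ in Eq. \eqref{eq:red-ham} together with smooth dependence of the period integrals on the momentum. Properness reduces to controlling the blow-up of the period integrals as the parameter approaches the boundary of $\mathrm{Int}\,\mathcal{A}$, which is accessible by direct asymptotic analysis near the singular points of $H_\mu^{-1}(\tfrac{1}{2})$ characterized in Proposition \ref{prp:red-dy-eq}. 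Non-singularity of the Jacobian is the technical heart: the two partial derivatives of $\Pi$ are improper integrals whose kernels change sign over $I_\mu$, and the required non-vanishing of the $2\times 2$ determinant has to be verified by a monotonicity argument on these integrals. Here the restriction $\mu\in\Lambda_i$ is essential, since the symmetries $b=0$ in Family 1 and $a=0$ in Family 2 (see Appendix \ref{subsub:sym}) collapse one of the partial derivatives to a definite-sign integral and make the determinant analysis tractable along the one-parameter slices.

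Once injectivity of $\Pi$ is in hand, the sequence method yields $\mathrm{length}(\sigma_T)\to 2T$, so $c_h(t)$ is a metric line in $\R^5_{(a,b)}$; invoking Lemma \ref{lemm:hor-lift-met-lin} one last time promotes this to $\gamma_h(t)$ being a metric line in $\PJ$. Theorem \ref{the:main} then follows immediately, since $\Lambda_1$ and $\Lambda_2$ are, modulo Carnot dilatations and translations, the two one-parameter families of candidate homoclinic geodesics described in Section \ref{subsubsec:cand-geo}.
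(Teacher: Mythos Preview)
Your high-level plan---reduce to $\R^5_{(a,b)}$ via Lemma~\ref{lemm:hor-lift-met-lin}, run the sequence method, and prove injectivity of the period map via Hadamard---matches the paper. But two concrete pieces of the mechanism are off, and they matter.

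First, your period map is attached to the wrong object. You treat $\Pi$ as the per-oscillation displacement of a \emph{periodic} minimizer $\sigma_{T_n}$ and write $c_h(T_n)-c_h(-T_n)=N_n\,\Pi(\mu'_n)+O(1)$. The paper's period map $\Theta^\pm_{(a,b)}$ (Definition~\ref{def:period-map}) is defined on the momenta $\Lambda_{(a,b)}$ of \emph{homoclinic} geodesics, and records the \emph{total} asymptotic increments $(\Delta y^2,\mathrm{Cost}_y,\Delta z^2)$ over the whole hill interval. The way it enters is not through counting periods of $\sigma_{T_n}$ but through the limit: after recentering (the ``new initial point'' step, which you omit), one extracts a subsequential limit $\widetilde c_\infty$ in a suitable $\mathrm{Min}(K,\T)$; the asymptotic endpoint conditions \eqref{eq:asy-cond}--\eqref{eq:asy-per-con} force $\widetilde c_\infty$ to be homoclinic with momentum $\nu\in\Lambda_{(a,b)}$ and $\Theta^+_{(a,b)}(\nu)=\Theta^+_{(a,b)}(1,1)$, whence injectivity gives $\nu=(1,1)$. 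Your periodic decomposition does not produce this identity, so the contradiction step as written does not close. You also skip the geodesic-compactness input (Lemma~\ref{lem:geo-com-ab}) needed to bound $x_n(\T_n)$ and make the recentering work.

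Second, you misplace the role of the restriction to $\Lambda_i$. The Jacobian of $f\circ\Theta^+_{(a,b)}$ is shown non-degenerate for \emph{all} $(a,b)\neq(0,0)$ (Lemma~\ref{lem:jac-F}); the families $(a,0)$ and $(0,b)$ are not needed there. What the special families buy is (i) simple connectedness of the image $N_{(a,b)}$ in Lemma~\ref{lem:hyp-Hard-the}, via the explicit scaling actions on $\mathcal A^+_{(a,b)}$, and (ii) the disjointness $\Theta^+(\mathcal A^+\setminus\mathcal A^{\mathrm{even}})\cap\Theta^-(\mathcal A^-\setminus\mathcal A^{\mathrm{even}})=\varnothing$ required by Condition~\textit{(4)} of Theorem~\ref{thm:main-2} (trivial for $(a,0)$ by evenness; a sign argument on $\Theta^1$ for $(0,b)$). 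Your proposal does not address either of these, and they are exactly where the general $(a,b)$ case remains open.
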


\subsubsection{The Magnetic Space}\label{sss:mag-spa}
The magnetic space $\R^5_{(a,b)}$ associated to $\PJ$ is a  \sR structure whose construction is as follows. 

%$(\R^5_{(a,b)},\D_{(a,b)},\langle\cdot,\cdot\rangle_{(a,b)})$

Consider $\R^5_{(a,b)}$ with coordinates $(x,y^1,y^2,z^1,z^2)$.  We will adopt the short notation $\textbf{y} = (y^1,y^2)$ and $\textbf{z} = (z^1,z^2)$. We define a non-integrable distribution $\D_{(a,b)}$ on $\R^5_{(a,b)}$, which is defined by the following Pfaffian system:
$$ dz^1 - P_1(x) dy^1 = 0,\;\;\text{and}\;\; dz^2 - P_2(x) dy^2 = 0. $$
Hence, the following vector fields provide a global frame for  $\D_{(a,b)}$
$$ \widetilde{X} = \frac{\partial}{\partial x}, \;\;\widetilde{Y}_1 = \frac{\partial}{\partial y^1} + P_1(x)\frac{\partial}{\partial z^1},\;\;\text{and} \;\;\widetilde{Y}_2 = \frac{\partial}{\partial y^2} + P_2(x)\frac{\partial}{\partial z^2}.  $$
The quadratic form $ ds^2_M = (dx^2 + (dy^1)^2+ (dy^2)^2)|_{\D_{(a,b)}}$ induces an \sR inner product on $\D_{(a,b)}$. The \sR submersion $\pi_{(a,b)}$ is  given by
$$\pi_{(a,b)}(j^k_{x}(f)) = (x,u^2_1,u^2_2, \sum_{i=0}^2 (-1)^iu^{2-i}_1\frac{d^i P_1}{dx^i},\sum_{i=0}^2 (-1)^iu^{2-i}_2\frac{d^i P_2}{dx^i}). $$

%\todo{Consider adding the frame in the new coordinates}

\subsubsection{\SR Geodesic in the Magnetic Space}\label{sss:geo-mag}
Consider the cotangent bundle $T^*\R^5_{(a,b)}$ with the canonical coordinates $(p_x,\mathbf{p}_{\mathbf{y}},\mathbf{p}_{\mathbf{z}},x,\mathbf{y},\mathbf{z})$.  The \sR kinetic energy $H_{(a,b)}:T^*\R^5_{(a,b)} \to \R$ is given by 
$$ H_{(a,b)}(p_x,\mathbf{p}_{\mathbf{y}},\mathbf{p}_{\mathbf{z}},x,\mathbf{y},\mathbf{z}) = \frac{1}{2}p_x^2 + \frac{1}{2}(p_{y^1}+p_{z^1}P_1(x))^2+  \frac{1}{2}(p_{y^2}+p_{z^2}P_2(x))^2.$$
The Hamiltonian function $H_{(a,b)}$ does not depend on the coordinates $(\mathbf{y},\mathbf{z})$, so  the mometum $\nu :=(\mathbf{p}_{\mathbf{y}},\mathbf{p}_{\mathbf{z}})$ remains constant under the reduced Hamiltonian flow. Consequently, the translation $\Phi_{(\mathbf{y}_0,\mathbf{z}_0)}:\R^5_{(a,b)} \to \R^5_{(a,b)}$  given by:
\begin{equation}\label{eq:tran-isom}
    \Phi_{(\mathbf{y}_0,\mathbf{z}_0)}(x,\mathbf{y},\mathbf{z}) = (x,\mathbf{y}+\mathbf{y}_0,\mathbf{z}+\mathbf{z}_0),
\end{equation}
is an isometry. 

Analogously to the case of $\PJ$, the \sR geodesics in $\R^5_{(a,b)}$ are in bijection with a pencil of polynomials. The following definitions formalize this fact.
\begin{definition}
     The pencil $\mathrm{Pen}_{(a,b)}$ of $P_{(a,b)}(x)$ is a $4$-dimensinal vector space of vector-valued functions with the form $G_\nu(x) := (G_1(x;\nu),G_2(x;\nu))$ given by
    $$\{ G_\nu(x) = (\alpha_1+\beta_1P_1(x),\alpha_2+\beta_1P_2(x)) : \nu = (\alpha_1,\alpha_2,\beta_1,\beta_2) \in \R^4\}.$$ 
\end{definition}

Let us delineate the \sR geodesics in $\R^5_{(a,b)}$, as well as  their horizontal lift and connection with \sR geodesics in $\PJ$:

A \sR geodesic $c(t) \in \R^5_{(a,b)}$ corresponds to a vector-valued function $G_{\mu}(x)$, and  its $x$-component satisfies a one-degree-of-freedom system given by the Hamiltonian function:
    \begin{equation}
        H_\nu(p_x,x) = \frac{1}{2}p_x^2 + \frac{1}{2}V_{\nu}(x),\;\;\text{where}\;\;V_\nu(x) = ||G_\nu(x)||^2. 
    \end{equation}
Once $x(t)$ is found as the solution to the above Hamiltonian system, the coordinates $\mathbf{y}$ and $\mathbf{z}$ satisfy the equations
    \begin{equation}\label{eq:y-z-ode}
        \dot{y}_i = G_i(x(t);\nu),\;\;\text{and}\;\;\dot{z}_i = G_i(x(t) ;\nu) P_i(x(t))\;\text{for all}\;i=1,2.
    \end{equation}
    The horizontal lift of the geodesic $c(t)$ is the solution to the differential equation:
    \begin{equation*}
        \dot{\gamma}(t) = p_x(t) X(\gamma(t)) + G_1(x(t);\nu) Y_1(\gamma(t))+G_2(x(t),\nu) Y_2(\gamma(t)).
    \end{equation*}
With the above discussion, we have proven the following lemma.
\begin{lemma}\label{the:Rab-geo}
Every \sR geodesic in $\R^5_{(a,b)}$ is the $\pi_{(a,b)}$-projection of a geodesic in $\PJ$ corresponding to a vector-valued function $G_{\mu}(x) \in \mathrm{Pen}_{(a,b)}$. Conversely, the horizontal lift of a geodesic in $\R^5_{(a,b)}$ is a geodesic corresponding to a vector-valued function in $\mathrm{Pen}_{(a,b)}$.
\end{lemma}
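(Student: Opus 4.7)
The plan is to unwrap both directions of the correspondence using the Hamiltonian formalism, and to verify on the nose that the reduced Hamiltonian system on $T^*\R$ arising from the sub-Riemannian kinetic energy $H_{(a,b)}$ matches the reduced system already constructed in Section \ref{sss:geo-con} for geodesics in $\PJ$. First, I would consider a normal sub-Riemannian geodesic $c(t)$ in $\R^5_{(a,b)}$. Since the Hamiltonian $H_{(a,b)}$ is independent of the coordinates $(\mathbf{y},\mathbf{z})$, Noether's theorem (or direct inspection of Hamilton's equations) gives that $\nu := (\mathbf{p}_{\mathbf{y}}, \mathbf{p}_{\mathbf{z}}) \in \R^4$ is conserved along the flow. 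Fixing $\nu$, the evolution of $(p_x,x)$ is governed by the reduced Hamiltonian $H_\nu(p_x,x) = \tfrac{1}{2}p_x^2 + \tfrac{1}{2}V_\nu(x)$ where $V_\nu(x) = \|G_\nu(x)\|^2$ and
\begin{equation*}
G_\nu(x) = (p_{y^1} + p_{z^1}P_1(x),\; p_{y^2} + p_{z^2}P_2(x)).
\end{equation*}
By construction $G_\nu$ belongs to $\mathrm{Pen}_{(a,b)}$, with parameters $(\alpha_1,\alpha_2,\beta_1,\beta_2) = (p_{y^1},p_{y^2},p_{z^1},p_{z^2})$.

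Second, I would identify this reduced system with the one in Section \ref{sss:geo-con} applied to the polynomial $G_\nu \in \mathrm{Pen}_{(a,b)}$. Once $x(t)$ and $p_x(t)$ are obtained, the remaining coordinates of $c(t)$ are recovered by integrating Eq.~\eqref{eq:y-z-ode}. I would then check directly in coordinates that the candidate horizontal lift
\begin{equation*}
\dot{\gamma}(t) = p_x(t)\,X(\gamma(t)) + G_1(x(t);\nu)\,Y_1(\gamma(t)) + G_2(x(t);\nu)\,Y_2(\gamma(t))
\end{equation*}
solves the geodesic equation in $\PJ$ associated with $G_\nu$, and that $\pi_{(a,b)}(\gamma(t)) = c(t)$ using the explicit formula for $\pi_{(a,b)}$ given in Section \ref{sss:mag-spa}. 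Since both sides of this equality satisfy the same ODE with the same initial condition (up to the translational isometry \eqref{eq:tran-isom}), uniqueness for the Cauchy problem finishes this direction.

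For the converse, given a geodesic $\gamma(t)$ in $\PJ$ whose vector-valued function lies in $\mathrm{Pen}_{(a,b)}$, the same identification read backwards shows that $c(t) := \pi_{(a,b)}(\gamma(t))$ satisfies Hamilton's equations for $H_{(a,b)}$ with $(\mathbf{p}_{\mathbf{y}}, \mathbf{p}_{\mathbf{z}}) = (\alpha_1,\alpha_2,\beta_1,\beta_2)$, hence is a normal geodesic in $\R^5_{(a,b)}$. The only subtlety I anticipate is the treatment of abnormal geodesics: by Proposition~\ref{prp:abn-geo} every abnormal curve in $\PJ$ has $\dot{\gamma} \in \mathrm{span}\{Y_1,Y_2\}$ and constant $x$-component, which corresponds to the constant polynomials obtained by setting $\beta_1 = \beta_2 = 0$ in the pencil; these lie in $\mathrm{Pen}_{(a,b)}$, and a parallel statement in $\R^5_{(a,b)}$ (to be established analogously to Proposition~\ref{prp:abn-geo}, as flagged in Appendix \ref{Ap:abn}) ensures the bijection extends across both normal and abnormal regimes. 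The main bookkeeping obstacle is thus not conceptual but rather the coordinate-level verification that the pushforward $(\pi_{(a,b)})_*$ sends the frame $\{X,Y_1,Y_2\}$ of $\PJ$ to the frame $\{\widetilde{X},\widetilde{Y}_1,\widetilde{Y}_2\}$ of $\R^5_{(a,b)}$ with unit metric correspondence, which is a routine but essential computation from the explicit formula for $\pi_{(a,b)}$.
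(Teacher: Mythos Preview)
Your proposal is correct and follows essentially the same approach as the paper: the paper's ``proof'' is simply the discussion immediately preceding the lemma, where the Hamiltonian $H_{(a,b)}$ is written down, the conservation of $\nu = (\mathbf{p}_{\mathbf{y}},\mathbf{p}_{\mathbf{z}})$ is noted, the reduced system $H_\nu$ with potential $V_\nu(x)=\|G_\nu(x)\|^2$ is extracted, and the horizontal lift equation is displayed. Your write-up is in fact more thorough than the paper's own treatment, since you explicitly flag the verification that $(\pi_{(a,b)})_*$ matches the frames and address the abnormal case, both of which the paper leaves implicit.
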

It is important to note that $c_h(t)\in \R^5_{(a,b)}$ is a \sR geodesic corresponding to the momentum $\nu= (0,0,1,1)$.

The dynamics of the reduced system $H_\nu$ take place on the hill interval $I_\nu$ as described in Definition \ref{def:hill-interval}.  A vector-valued function $G_{\mu}(x) \in \mathrm{Pen}_{(a,b)}$ defines an isoenergetic algebraic curve given by
\begin{equation}\label{eq:alg-cur-G}
    \C(\nu,I_{\nu}) := \{ (p_x,x) : 1 = p_x^2 + V_\nu(x)\;\text{and}\;x\in I_\nu \}.
\end{equation}
The algebraic curve $\C(\nu,I_{\nu})$ will take relevance later, when we analyze the smoothness of the period map and other functions. The relation between the singular points in $\C({\mu},I_{\nu})$ and the equilibrium point of the Hamiltonian system for $H_{\nu}$ is given by the following result. 

\begin{proposition}\label{prp:sin-point}
    Consider $(p_x,x) \in \C(\nu,I_{\nu})$. The point $(p_x,x)$ is a singular point of $\C(\nu,I_{\nu})$ if and only if $(p_x,x)$ is an equilibrium point of the Hamiltonian system for $H_{\mu}$. %$P_{\mu}(x) \cdot P'_{\mu}(x) = 0$,
\end{proposition}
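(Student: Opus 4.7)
The plan is to show that both notions—being a singular point of the isoenergetic curve and being an equilibrium of the reduced Hamiltonian system—reduce to the same pair of algebraic conditions on $(p_x, x)$, namely $p_x = 0$ and $V_\nu'(x) = 0$, and then observe that on $\C(\nu, I_\nu)$ these conditions are compatible (forcing $V_\nu(x) = 1$, which by Definition \ref{def:hill-interval} places $x$ at the boundary of the hill interval).

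First, I would write the curve $\C(\nu, I_\nu)$ as the zero locus of the polynomial
\[
F(p_x, x) := p_x^2 + V_\nu(x) - 1
\]
restricted to $I_\nu$. By definition, $(p_x, x)$ is a singular point of $\C(\nu, I_\nu)$ if and only if $F(p_x, x) = 0$ together with the vanishing of the full gradient $\nabla F(p_x,x) = (2 p_x,\, V_\nu'(x)) = (0,0)$. Hence a point is singular precisely when
\[
p_x = 0, \qquad V_\nu'(x) = 0, \qquad V_\nu(x) = 1.
\]

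Next, I would compute Hamilton's equations associated with $H_\nu(p_x, x) = \tfrac{1}{2}p_x^2 + \tfrac{1}{2}V_\nu(x)$, which read
\[
\dot{x} = \frac{\partial H_\nu}{\partial p_x} = p_x, \qquad \dot{p}_x = -\frac{\partial H_\nu}{\partial x} = -\tfrac{1}{2}V_\nu'(x).
\]
Thus $(p_x, x)$ is an equilibrium if and only if $p_x = 0$ and $V_\nu'(x) = 0$. Imposing in addition that $(p_x, x)$ lies on $\C(\nu, I_\nu)$ — i.e.\ on the energy level $H_\nu = \tfrac{1}{2}$ — yields $V_\nu(x) = 1$, so the two sets of conditions coincide exactly.

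Finally, I would note that the compatibility with $x \in I_\nu$ is automatic: the equality $V_\nu(x) = 1$ together with Definition \ref{def:hill-interval} forces $x \in \partial I_\nu$, so singular points can only occur at the endpoints of the hill interval, which is consistent with the familiar picture that equilibria of the reduced dynamics sit at the tips of the algebraic curve (cf.\ Figure \ref{fig:hill}). There is no real obstacle here; the statement is a direct computation once $F$ and the Hamiltonian equations are written down, and the content of the proposition is the observation that the defining polynomial of $\C(\nu, I_\nu)$ and the Hamiltonian $H_\nu$ are, up to an affine normalization, the same function of $(p_x, x)$.
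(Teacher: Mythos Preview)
Your proposal is correct and follows essentially the same approach as the paper's proof: both identify the singular-point condition $\nabla F = 0$ on the defining polynomial with the equilibrium condition $\nabla H_\nu = 0$, noting that $H_\nu$ and $F$ differ only by an affine rescaling. The paper's version is terser (it invokes Proposition~\ref{prp:red-dy-eq} and states the identification in one line), while you spell out the gradient and Hamilton's equations explicitly, but the argument is the same.
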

\begin{proof}
The equilibrium points of the Hamiltonian system for $H_{\mu}$ are characterized as in Proposition \ref{prp:red-dy-eq}. So $(p_x,x)$ is an equilibrium point if and only if $\nabla H_{\nu} = (0,0)$, which is the condition for a singular point in  $\C(\nu,I_{\nu})$. 
\end{proof}

The classification of \sR geodesics in $\R^5_{(a,b)}$ is according to their reduced dynamics $H_{\mu}$ as in Section \ref{sub-sec:class-geo}. In particular, the candidate geodesic $c_h(t)$ is homoclinic.  

\subsubsection{The set of homoclinic geodesics}\label{sss:hom-goe}
 We denote the subset of momenta corresponding to homoclinic geodesics with the same asymptotic velocity as the candidate geodesic $c_h(t)$ by
$$\Lambda_{(a,b)} :=\Lambda^{+}_{(a,b)}\cup \Lambda^{-}_{(a,b)} \subset\R^4,$$
 More precisly, $\nu \in \Lambda_{(a,b)}$ if and only if the vector-valued function $G_{\nu}(x)$ satisfies
 \begin{itemize}
 \item $G_{\nu}(0) = (1,0)$ and $G_{\nu}(0)\cdot G_{\nu}'(0) = 0$.
 \end{itemize}
In addition, 
 \begin{itemize}
    \item If $\nu \in \Lambda^{+}_{(a,b)}$, then the hill interval has the form $I^+_{\nu} = [0,x^+(\nu)]$, where $0< x^+(\nu)$;
    
    \item If $\nu \in \Lambda^{-}_{(a,b)}$, then the hill interval has the form $I^-_{\nu} = [x^-(\nu),0]$ where $x^-(\nu) < 0$.
 \end{itemize}
(Case $0 \leq ab$) We define $\mathcal{A}^+_{(a,b)}$:
\begin{equation}\label{eq:Acal-1}
    \begin{split}
       \mathcal{A}^+_{(a,b)} & := \{ (\tau,\eta) \in (0,\infty)\times\R: 0 < 2\tau -b^2\eta^2\}.\\
    \end{split}
\end{equation}
(Case $0<ab$) We define $\mathcal{A}^-_{(a,b)}$:
\begin{equation}\label{eq:Acal-1-1}
    \begin{split}
       \ \mathcal{A}^-_{(a,b)} & := \{ (\tau,\eta) \in (0,\infty)\times\R: 0 \leq 2\tau -b^2\eta^2\}.
    \end{split}
\end{equation}
(Case  $ab<0$) We define $\mathcal{A}^+_{(a,b)}$:
\begin{equation}\label{eq:Acal-2}
    \begin{split}
       \mathcal{A}^+_{(a,b)} & := \{ (\tau,\eta) \in (0,\infty)\times\R: 0 \leq 2\tau -b^2\eta^2\}.
    \end{split}
\end{equation}
(Case  $ab\leq 0$) We define $\mathcal{A}^-_{(a,b)}$:
\begin{equation}\label{eq:Acal-2-2}
    \begin{split}
       \ \mathcal{A}^-_{(a,b)} & := \{ (\tau,\eta) \in (0,\infty)\times\R: 0 < 2\tau -b^2\eta^2\}.
    \end{split}
\end{equation}
Then a bijection $\nu: \mathcal{A}^\pm_{(a,b)} \to \Lambda^\pm_{(a,b)}$ is given by \begin{equation}\label{eq:nu-par}
    \nu(\tau,\eta) = (1-\tau,0,\tau,\eta).
\end{equation} 
Therefore, the vector-valued function $G_{\mu}(x)$ has the form 
\begin{equation}
    G_{\nu}(x) = (G_1(x;\nu),G_2(x;\nu)) = (1-\tau x^2,\eta(bx+ax^2)),
\end{equation}
and the candidate geodesic $c_h(t)$ has momentum $\nu(1,1)$. Moreover, $x^{\pm}(\nu)$ is given by:
\begin{equation}\label{eq:root-nu}
   x^{\pm}(\nu) = \frac{-ab\eta^2 \pm \sqrt{\tau^2(2\tau-b^2\eta^2)+2a^2\eta^2\tau}}{\tau^2+a^2\eta^2} .
\end{equation}

The following result characterizes the abnormal geodesics in $\R^5_{(a,b)}$.
\begin{proposition}\label{prp:abn-mag-spa}
  The magnetic space $\R^5_{(a,b)}$ possesses a family of abnormal geodesics characterized by: 
  $$ \dot{c}(t) = v_1\tilde{Y}_1++v_2\tilde{Y}_2, $$
  where $(v_1,v_2)$ is a constant vector such that $0 = (v_1,v_2)\cdot (\frac{dP_1}{dx} ,\frac{dP_2}{dx})$.
\end{proposition}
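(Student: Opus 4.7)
The plan is to apply the Pontryagin Maximum Principle (PMP) to isolate the abnormal extremals of the sub-Riemannian structure on $\R^5_{(a,b)}$. First I would compute the annihilator $\D_{(a,b)}^\perp \subset T^*\R^5_{(a,b)}$ explicitly: in the canonical coordinates $(x, \mathbf{y}, \mathbf{z}, p_x, \mathbf{p}_\mathbf{y}, \mathbf{p}_\mathbf{z})$, a covector $\lambda$ lies in $\D_{(a,b)}^\perp$ iff $p_x = 0$ and $p_{y^i} + P_i(x)\, p_{z^i} = 0$ for $i = 1, 2$. The PMP Hamiltonian for abnormal extremals is then
$$ H = u_0 p_x + u_1\bigl(p_{y^1} + P_1(x)\,p_{z^1}\bigr) + u_2\bigl(p_{y^2} + P_2(x)\,p_{z^2}\bigr), $$
and it vanishes identically along any abnormal lift $\lambda(t)$.

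Next, I would extract structural consequences from the adjoint equations. Because $H$ is independent of $\mathbf{y}$ and $\mathbf{z}$, the momenta $p_{y^i}$ and $p_{z^i}$ are conserved along any abnormal extremal. Combining conservation with the annihilator constraint $p_{y^i} = -P_i(x(t))\, p_{z^i}$ and differentiating in $t$ yields $P_i'(x(t))\,\dot x(t)\, p_{z^i} = 0$. Non-triviality of $\lambda$ precludes $(p_{z^1}, p_{z^2}) = (0,0)$ (otherwise every component of $\lambda$ vanishes), so if $\dot x \neq 0$ on some open interval, then $P_i'$ would vanish on that interval for every $i$ with $p_{z^i} \neq 0$. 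Since $P_1$ and $P_2$ are non-constant polynomials once the degenerate case $(a,b) = (0,0)$ is excluded, this is impossible. Hence $\dot x \equiv 0$, so $x(t) = x_0$ is constant and $u_0 \equiv 0$. The velocity therefore lies in the plane spanned by $\widetilde Y_1, \widetilde Y_2$; arclength parametrization and smoothness then force $(u_1, u_2) = (v_1, v_2)$ to be a fixed constant vector.

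The orthogonality condition is to emerge from the remaining adjoint equation. Since $p_x \equiv 0$, we have $\dot p_x = 0$, and computing $-\partial H/\partial x$ yields the compatibility equation
$$ v_1\, P_1'(x_0)\, p_{z^1} + v_2\, P_2'(x_0)\, p_{z^2} = 0. $$
Selecting the canonical symmetric lift $(p_{z^1}, p_{z^2}) = (1,1)$ in the two-dimensional fiber $\D_{(a,b)}^\perp\big|_{c(t)}$ reduces this compatibility to the stated condition
$$ (v_1, v_2)\cdot \bigl(P_1'(x_0), P_2'(x_0)\bigr) = 0. $$
Conversely, for any constant vector $(v_1, v_2)$ satisfying this orthogonality and any base point with first coordinate $x_0$, the explicit lift just described is a nontrivial element of $\D_{(a,b)}^\perp$ satisfying the full adjoint system, certifying the corresponding horizontal curve as an abnormal extremal.

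The hard part will be justifying that the orthogonality condition, rather than the weaker (less symmetric) compatibility involving arbitrary $(p_{z^1}, p_{z^2})$, is the correct characterization of the family claimed in the statement. I expect this to hinge either on invoking the Goh second-order necessary condition for abnormal minimality, which here reduces to $p_{z^i} P_i'(x_0) = 0$ for each $i$ separately and singles out the canonical covector, or on exploiting the translational symmetry of Eq.~\eqref{eq:tran-isom} to normalize $\lambda$ to the diagonal form $(p_{z^1}, p_{z^2}) = (1,1)$. Either route isolates the stated orthogonality as the geometrically meaningful characterization of this family of abnormal geodesics.
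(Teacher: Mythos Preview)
Your PMP computation matches the paper's proof almost verbatim: both set up the abnormal Hamiltonian, read off $p_x=0$ and $p_{y^i}+P_i(x)p_{z^i}=0$ from the maximum condition, use conservation of $(p_{\mathbf{y}},p_{\mathbf{z}})$, exclude $p_{\mathbf{z}}=\mathbf{0}$ by non-triviality, and arrive at the compatibility
\[
v_1\,p_{z^1}P_1'(x_0)+v_2\,p_{z^2}P_2'(x_0)=0.
\]
Your route to $\dot x\equiv 0$ (differentiating the constraint $p_{y^i}=-P_i(x)p_{z^i}$) is in fact cleaner than the paper's; the paper invokes ``the Hamiltonian equations imply $v_x(t)=p_x$'', which is a normal-extremal identity and not really a valid step in the abnormal case.

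Where you go beyond the paper is in worrying about the gap between the compatibility above and the stated orthogonality $(v_1,v_2)\cdot(P_1',P_2')=0$. The paper simply stops at the general compatibility and does not close this gap either. Your two proposed fixes, however, do not work. The isometries $\Phi_{(\mathbf{y}_0,\mathbf{z}_0)}$ are translations in $(\mathbf{y},\mathbf{z})$ only; their cotangent lift fixes $(p_{z^1},p_{z^2})$, so no normalization to $(1,1)$ is available. And the Goh condition here is $p_{z^i}P_i'(x_0)=0$ for each $i$: at a point where both $P_i'(x_0)\neq 0$ it forces $p_{\mathbf{z}}=\mathbf{0}$ (no Goh abnormals), while at a point where, say, $P_1'(x_0)=0$ it forces $p_{z^2}=0$ and the compatibility becomes vacuous---it does not single out $(p_{z^1},p_{z^2})=(1,1)$.

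The honest resolution is that the proposition as stated is exhibiting a family, not giving the full characterization: taking $(p_{z^1},p_{z^2})=(1,1)$ yields the displayed orthogonality, and your converse check certifies these curves as abnormal. The full set of abnormal extremals is described by the weaker compatibility with arbitrary nonzero $(p_{z^1},p_{z^2})$, which is exactly where both your argument and the paper's proof land.
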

The proof of Proposition \ref{prp:abn-mag-spa} is in the Appendix \ref{Ap:abn}. 

%Before introducing the period map, we will justify why it is enough to consider only hill intervals $I_\mu \subset[0,\infty]$

\subsubsection{The Cost Function}\label{sss:cost-fuc}
The cost function is an auxiliary tool used to prove Theorem \ref{thm:main-2} and will help us define the period map.

    Let $(c,\T)$ be a pair of \sR geodesic $c(t) \in \R^5_{(a,b)}$ parametrized by arc length, and a time interval $\T := [t_0,t_1]$. For every pair $(c,\T)$, the map $\Delta (c,\T)$ encodes the change in the time $t$ and the coordinates $\mathbf{y}$ and $\mathbf{z}$ as the geodesic $c(t)$ evolves during the interval $\T$. The map $\Delta:(c,\T) \to [0,\infty) \times \R^{4}$ is given by
    \begin{equation}
    \begin{split}
    \Delta(c,\T) & := (\Delta t(c,\T), \Delta \mathbf{y}(c,\T) , \Delta \mathbf{z}(c,\T))\\
    & := (t_1-t_0, \mathbf{y}(t_1) - \mathbf{y}(t_0), \mathbf{z}(t_1) - \mathbf{z}(t_0)).
    \end{split}
    \end{equation}
    Let us consider $v :=(0,\hat{v})$ where $\hat{v} = (v_1,v_2) \in \R^2$ is a unit vector. The cost functions encoding the asymptotic behavior of the \sR geodesic with asymptotic velocity $v$ are given by:
    \begin{equation*}
        \begin{split}
           \mathrm{Cost}_t^v(c,\T) &= \Delta t(c,\T) -    v_1 \cdot\Delta y(c,\T) , \\
        \mathrm{Cost}_y^v(c,\T) &= v \cdot (\Delta y(c,\T) -  \Delta z(c,\T)). \\
        \end{split}
    \end{equation*}

For the remainder of this subsection, we will discuss the relevance and significance of the cost function. The value $\mathrm{Cost}_t^v(c,\T)$ is the cost incurred by the geodesic $c(t)$ when traveling in the direction $v$. The following result offers further insight into this interpretation.

\begin{proposition}
    Let $q_1$ and $q_2\in \R^5_{(a,b)}$. Let $c(t)$ and $\widetilde{c}(t) \in \R^5_{(a,b)}$ be two \sR geodesics with the property that they travel from $q_1$ to $q_2$ in a time interval $\T$ and $\widetilde{\T}$, respectively. If $\mathrm{Cost}_t^v(c,\T) < \mathrm{Cost}_t^v( \widetilde{c},\widetilde{\T})$, then the arc length of $c(t)$ is shorter than the arc length of $\widetilde{c}(t)$.
\end{proposition}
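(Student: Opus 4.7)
The proposition will follow almost directly from the definition of the cost function, combined with two elementary observations: that arc‑length parametrization makes $\Delta t$ equal to length, and that $\Delta\mathbf{y}$ depends only on the endpoints. My plan is as follows.

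First, I would use the hypothesis that both $c(t)$ and $\widetilde{c}(t)$ are sub-Riemannian geodesics parametrized by arc length in $\R^5_{(a,b)}$. This gives
\begin{equation*}
\text{Length}(c|_{\T}) \;=\; \Delta t(c,\T), \qquad \text{Length}(\widetilde{c}|_{\widetilde{\T}}) \;=\; \Delta t(\widetilde{c},\widetilde{\T}),
\end{equation*}
so the conclusion to be shown, namely that the arc length of $c$ is strictly less than that of $\widetilde{c}$, is equivalent to the scalar inequality $\Delta t(c,\T) < \Delta t(\widetilde{c},\widetilde{\T})$.

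Next, I would observe that because $\mathbf{y}$ is a coordinate function on $\R^5_{(a,b)}$ and both curves run from $q_1$ to $q_2$, the increments
\begin{equation*}
\Delta\mathbf{y}(c,\T) \;=\; \mathbf{y}(q_2) - \mathbf{y}(q_1) \;=\; \Delta\mathbf{y}(\widetilde{c},\widetilde{\T})
\end{equation*}
coincide: they depend only on the two endpoints, not on the path traversed between them. Substituting into the definition $\mathrm{Cost}_t^v(c,\T) = \Delta t(c,\T) - v_1\cdot\Delta\mathbf{y}(c,\T)$ (where $v_1\cdot\Delta\mathbf{y}$ is read as the Euclidean pairing of the two-dimensional velocity component of $v$ against $\Delta\mathbf{y}\in\R^2$), the terms involving $\Delta\mathbf{y}$ cancel in the difference, yielding
\begin{equation*}
\mathrm{Cost}_t^v(c,\T) - \mathrm{Cost}_t^v(\widetilde{c},\widetilde{\T}) \;=\; \Delta t(c,\T) - \Delta t(\widetilde{c},\widetilde{\T}).
\end{equation*}
The hypothesis makes the left-hand side strictly negative, hence so is the right, and combining with the first step yields $\text{Length}(c)<\text{Length}(\widetilde{c})$.

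I do not expect any genuine obstacle: the proposition is essentially a reformulation of the definition, its content being the observation that the $\Delta\mathbf{y}$ term in the cost function is an endpoint invariant and therefore irrelevant when comparing two curves between the same pair of points. The only care required is notational, namely interpreting the expression $v_1\cdot\Delta\mathbf{y}$ consistently as a quantity determined by the endpoints; the argument above is insensitive to this choice of convention, since whatever the precise meaning, the term contributes identically to both $\mathrm{Cost}_t^v(c,\T)$ and $\mathrm{Cost}_t^v(\widetilde{c},\widetilde{\T})$ and thus drops out of the difference. The proposition serves as the conceptual justification for using the cost function as the measuring stick in the sequence method employed in Theorem \ref{thm:main-3}.
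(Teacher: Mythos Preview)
Your proposal is correct and matches the paper's own proof essentially line for line: both use that $\Delta\mathbf{y}$ depends only on the endpoints, so the $v\cdot\Delta\mathbf{y}$ term cancels in the difference of costs, reducing the hypothesis to $\Delta t(c,\T)<\Delta t(\widetilde{c},\widetilde{\T})$, which is the desired length comparison under arc-length parametrization. Your explicit remark that $\Delta t$ equals arc length and your handling of the notational ambiguity in $v_1\cdot\Delta\mathbf{y}$ versus $v\cdot\Delta\mathbf{y}$ are helpful clarifications that the paper leaves implicit.
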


\begin{proof}
    The property that $c(t)$ and $\widetilde{c}(t)$ travel from $\mathbf{A}$ to $\mathbf{B}$ implies that $\Delta \textbf{y}(c,\T) = \Delta \textbf{y}(\widetilde{c},\widetilde{\T})$. If $\mathrm{Cost}^v_t(\widetilde{c},\T) < \mathrm{Cost}^v_t(c,\widetilde{\T})$, then:
    \begin{equation*}
        \begin{split}
    \Delta t(c,\T) = &  \mathrm{Cost}^v_t(c,\T) + v \cdot \Delta \textbf{y}(c,\T) \\
    & < \mathrm{Cost}^v_t( \widetilde{c},\widetilde{\T}) + v \cdot \Delta \textbf{y}(\widetilde{c},\widetilde{\T})   =    \Delta t(\widetilde{c},\widetilde{\T}).  
        \end{split}
    \end{equation*}  
\end{proof}

 Before continuing our analysis of the cost function, we introduce the following notation to simplify expressions: let $d\phi_{\nu}$ be a closed but not exact one-form on the algebraic curve $\mathcal{C}(G_{\nu},I_{\nu})$ given by:
\begin{equation*}
    d\phi_{\nu} := \frac{dx}{\sqrt{1-V_\nu(x)}}.
\end{equation*}    
The one-form  $d\phi_{\nu}$ is smooth if and only if $\C( G_\nu,I_{\nu})$ is smooth. Indeed, if $I_{\nu} = [x_0,x_1]$, then $\C( G_\nu,I_{\nu})$ has a natural parametrization in terms of $x$ whenever $x \in (x_0,x_1)$, namely $(p_x,x) = (\pm \sqrt{1-V_{\mu}(x)},x)$. However, at the point $x = x_0$, the Implicit Function Theorem provides us a parametrization of $\C( G_\nu,I_{\nu})$ in terms of $p_x$. Indeed, the implicit derivative is given by:
$$ \frac{\partial x}{\partial p_x} = -\frac{p_x}{V'_{\nu}(x)}\;\;\text{which implies that} \;\;  d\phi_{\nu} = \frac{dp_x}{V'_{\nu}(x)}. $$
If $V'_{\nu}(x_0) = G_{\nu}(x) \cdot G'_{\nu}(x) \neq 0$, then $d\phi_{\nu}$ is smooth at $(0,x_0)$. Therefore, $d\phi_{\nu}$ is a smooth if and only if $\C(G_{\nu},I_{\nu})$ is smooth.

\begin{proposition}\label{pro:cost-G}
    Let $c(t)\in \R^5_{(a,b)}$ be a geodesic with momentum $\nu$. Let $\T := [t_0,t_1]$ be a time interval. The map  $\Delta(c,\T)$ can be expressed in terms of the vector-valued function $G(x)$ as follows:
    \begin{equation*}
    \begin{split}
    \Delta t(c,\T) & =  \int_{x(\T)} d\phi_{\nu},\\
        \Delta y_i(c,\T)  &= \int_{x(\T)} G_i(x;\nu)d\phi_{\nu},\;\;\text{for}\;\; i=1,2,\\
        \Delta z_i(c,\T) & = \int_{x(\T)} G_i(x;\nu)P_i(x)d\phi_{\nu}, \;\;\text{for}\;\; i=1,2. \\
    \end{split}
    \end{equation*}
     In the same way, the maps $\mathrm{Cost}_t^v(c,\T)$ and $\mathrm{Cost}_y^v(c,\T)$ are rewritten in term of $G_\nu(x)$ as:
     \begin{equation*}
    \begin{split}
        \mathrm{Cost}_t^v(c,\T) & = \int_{x(\T)} (1  - v\cdot G_{\nu}(x))d\phi_{\nu},\\
        \mathrm{Cost}_y^v(c,\T) & = \sum_{i=1}^2 v_i \int_{x(\T)} G_i(x;\nu)(1 -P_i(x)) d\phi_{\nu}. \\
    \end{split}
    \end{equation*}
\end{proposition}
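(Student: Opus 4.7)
The plan is to derive all identities by two routine reductions: first, change the variable of integration from $t$ to $x$ using the energy condition and Hamilton's equation for $H_\nu$; second, substitute the resulting formulas into the cost function definitions and collect terms. Because $c(t)$ is parametrized by arc length, Eq. \eqref{eq:red-ham} gives $p_x(t)^2 = 1 - V_\nu(x(t))$, and Hamilton's equation $\dot{x} = \partial H_\nu/\partial p_x = p_x$ yields $dt = dx/p_x$ along each monotone arc of $x(t)$. Reinterpreted on the algebraic curve $\mathcal{C}(\nu, I_\nu)$ defined in \eqref{eq:alg-cur-G}, this is precisely the one-form $d\phi_\nu$. By the discussion immediately preceding the proposition, $d\phi_\nu$ extends smoothly across turning points via the $p_x$-parametrization, so the substitution $dt = d\phi_\nu$ is globally valid on $\mathcal{C}(\nu, I_\nu)$.

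Next I would combine this change of variable with the horizontal lift equations \eqref{eq:y-z-ode}. Writing $\Delta t(c,\T) = \int_{t_0}^{t_1} dt$ immediately gives the first identity; applying $\dot y_i = G_i(x(t);\nu)$ gives $\Delta y_i(c,\T) = \int_{x(\T)} G_i(x;\nu)\, d\phi_\nu$; and applying $\dot z_i = G_i(x(t);\nu) P_i(x(t))$ gives $\Delta z_i(c,\T) = \int_{x(\T)} G_i(x;\nu) P_i(x)\, d\phi_\nu$. The two cost identities then follow immediately by substituting these three expressions into the definitions of $\mathrm{Cost}_t^v$ and $\mathrm{Cost}_y^v$, expanding $v \cdot G_\nu(x) = v_1 G_1(x;\nu) + v_2 G_2(x;\nu)$, and using linearity of the integral.

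The only nontrivial point to address is the change of variable near turning points of $x(t)$, where $p_x = 0$ and $\dot x$ changes sign. This is handled by the observation already recorded in the paper that $\mathcal{C}(\nu,I_\nu)$ is smooth away from its equilibrium points (Proposition \ref{prp:sin-point}): at each regular turning point the Implicit Function Theorem provides a smooth parametrization by $p_x$ under which $d\phi_\nu = dp_x/V_\nu'(x)$, so the integrands $G_i(x;\nu)\, d\phi_\nu$ and $G_i(x;\nu) P_i(x)\, d\phi_\nu$ are smooth multiples of a smooth one-form on $\mathcal{C}(\nu,I_\nu)$. Hence all integrals on the right-hand sides are well defined as contour integrals over the projected path $x(\T)$ on $\mathcal{C}(\nu,I_\nu)$, with the usual improper-integral interpretation near the unique singular point in the homoclinic case. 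No further analytic machinery is needed, so I expect the argument to be short; the only conceptual point worth emphasising is the reading of $d\phi_\nu$ as a one-form on $\mathcal{C}(\nu,I_\nu)$ rather than on the $x$-axis.
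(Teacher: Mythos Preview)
Your proposal is correct and follows essentially the same route as the paper: reduce the one-degree-of-freedom system to quadrature via the energy condition $H_\nu=\tfrac12$ and Hamilton's equation $\dot x=p_x$ to obtain $dt=d\phi_\nu$, then integrate the horizontal-lift relations \eqref{eq:y-z-ode} and plug the results into the definitions of the cost functions. Your treatment of the turning points via the $p_x$-parametrization is in fact more explicit than the paper's own proof, which simply records the change of variable and defers the sign and smoothness issues to Remark \ref{rem:one-form-close}.
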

\begin{proof}
  Since $H_{\mu}$ is a one-degree-of-freedom system, we reduced to quadrature using the standard method, i.e., we consider the energy condition $H_{\mu} = \frac{1}{2}$, and solve the algebraic equation for $p_x$ to find the expression: 
  $$ \dot x =  p_x = \frac{1}{\sqrt{1-V_{\nu}(x)}},$$
  where we used Hamilton equation $\dot{x} = \frac{\partial H_{\nu}}{\partial x}$. By considering the change of variable $t(x)$, we find that $dt = d\phi_{\nu}$. Then the integration of $dt$ yields the formula from the proposition statement for $\Delta t(c,\T)$. Using $dt = d\phi_{\nu}$ and Eq. \eqref{eq:y-z-ode}, we find the expressions:
  $$ dy^i = G_i(x;\nu)d\phi_{\nu},\;\;\text{and}\;\; dz^i = P_i(x)G_i(x;\nu)d\phi_{\nu}.$$
  Finally, using the definition of the cost function, we find the expressions from the proposition statement for $\mathrm{Cost}^v_t(c,\T)$ and $\mathrm{Cost}^v_y(c,\T)$.
\end{proof}
%Start Here

\begin{remark}\label{rem:one-form-close} Remark about Proposition \ref{pro:cost-G}:

    \textbf{(1)} There is no ambiguity in the sign of the integral expressions involving $\dot{x} = \sqrt{1-V_G(x)}$ \cite[Remark 2.21]{BravoDoddoli2024}. 

    \textbf{(2)} The function $\Delta t(c,\T)$ depends not only  on the endpoint $x(t_0)$ and $x(t_1)$, but also on the specific path $x(\T)$.  For instance, if $c(t)$ is a periodic geodesic, then $x(t+mL) = x(t)$ for $m\in \mathbb{Z}$ and:
   $$ \Delta t(c,[t,t+mL]) = m L(I_\nu),\;\;\text{where}\;\; L(I_\nu) := 2 \int_{I_\nu} d\phi_{\nu}.$$
    We concluded that $m$ is the degree of map $t \to (p_x(t),x(t))$, where the solution is thought as a map from $\mathbb{S} = \R/L\mathbb{Z}$ to $\C(G,I_{G})$. 

    \textbf{(3)}  The functions $\Delta t(c,\T)$, $\Delta y^i(c,\T)$, and $\Delta z^i(c,\T)$ are finite whenever the one-forms $dt$, $dy^i$, and $dz^i$ are smooth over the curve $\C(G,I_{G})|_{x(\T)}$, respectively. The same statement holds for $\mathrm{Cost}^v_t(c,\T)$ and $\mathrm{Cost}^v_y(c,\T)$.
\end{remark}

For the remainder of the paper, we restrict our attention to the homoclinic geodesics with momentum $\nu \in \Lambda_{(a,b)}$. 
Accordingly, we consider only the asymptotic velocity $v=(0,1,0)$, and omit the superscript $v$ in the cost function for simplicity.

\begin{proposition}
    Let $c(t)\in \R^5_{(a,b)}$ be a \sR geodesic with hill interval $I_{\nu}$, and let $\{\T_n\}_{n\in \mathbb{N}}$ denote a sequence of time interval such that $\T_n \to [-\infty,\infty]$ when $n\to \infty$. Then, $\mathrm{Cost}_t(c,\T_n)$ and $\mathrm{Cost}_y(c,\T_n)$ converge when $n \to \infty$ if and only if $c(t)$ has momentum $\nu \in \Lambda_{(a,b)}$ or $G_{\mu}(x) = (1,0)$.
\end{proposition}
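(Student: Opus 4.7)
The plan is to invoke Proposition \ref{pro:cost-G} to rewrite both costs as improper integrals over $x(\T_n)$, and then reduce the convergence question to a local analysis of the integrands at the boundary $\partial I_\nu$. With $\hat v = (1,0)$, Proposition \ref{pro:cost-G} gives
\begin{equation*}
\mathrm{Cost}_t(c,\T_n) = \int_{x(\T_n)} (1 - G_1(x;\nu))\, d\phi_\nu, \qquad \mathrm{Cost}_y(c,\T_n) = \int_{x(\T_n)} G_1(x;\nu)(1-P_1(x))\, d\phi_\nu,
\end{equation*}
so convergence as $\T_n \to [-\infty,\infty]$ amounts to asking when these improper integrals remain finite as $x(\T_n)$ exhausts the hill interval $I_\nu$ with whatever multiplicity the reduced dynamics dictate.

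I would split the argument according to the classification in Section \ref{sub-sec:class-geo}. In the periodic case, $\C(\nu,I_\nu)$ is smooth and $d\phi_\nu$ is a bounded, nonvanishing one-form along the closed curve, so each traversal of $I_\nu$ contributes a fixed, strictly positive amount $2\int_{I_\nu}(1-G_1)\,d\phi_\nu$ to $\mathrm{Cost}_t$ (because $G_1(x)\le\|G_\nu(x)\|\le 1$ on $I_\nu$ with strict inequality on $\Int I_\nu$); hence the cost grows linearly in $n$, ruling out periodic orbits. In the line case $V_\nu\equiv$ const, the $x$-coordinate moves with constant velocity and the costs grow linearly in $\Delta t$ at rates $(1-g_1)$ and $g_1(1-P_1(x_0))$; convergence forces $g_1 = 1$ and hence, via $g_1^2+g_2^2\le 1$, $G_\nu \equiv (1,0)$.

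The central case is the homoclinic one, where the reduced dynamics spends infinite time near an equilibrium $x_\ast \in \partial I_\nu$. By Proposition \ref{prp:sin-point}, $x_\ast$ is a singular point of $\C(\nu,I_\nu)$, and a Taylor expansion gives $1 - V_\nu(x) \sim -\tfrac{1}{2}V_\nu''(x_\ast)(x-x_\ast)^2$, so $d\phi_\nu$ has a $1/|x-x_\ast|$ (logarithmic) singularity there. For $\mathrm{Cost}_t$ to converge, the factor $(1-G_1(x))$ must vanish at $x_\ast$, i.e., $G_1(x_\ast) = 1$; combined with $\|G_\nu(x_\ast)\|^2 = 1$ this forces $G_\nu(x_\ast) = (1,0)$, and the equilibrium condition $V_\nu'(x_\ast) = 2G_\nu(x_\ast)\cdot G_\nu'(x_\ast) = 0$ then yields $G_1'(x_\ast) = 0$, so $(1-G_1)\,d\phi_\nu$ is in $L^1$ near $x_\ast$. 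For $\mathrm{Cost}_y$ the integrand at $x_\ast$ equals $G_1(x_\ast)(1-P_1(x_\ast)) = x_\ast^2$, so convergence forces $x_\ast = 0$. Assembling these gives $G_\nu(0) = (1,0)$ and $G_\nu(0)\cdot G_\nu'(0) = 0$, precisely the defining conditions of $\Lambda_{(a,b)}$. Conversely, for $\nu\in\Lambda_{(a,b)}$ the orders of vanishing $(1-G_1)(x) = O(x^2)$ and $(1-P_1)(x) = x^2$ overwhelm the $1/|x|$ singularity of $d\phi_\nu$ at $0$, while the opposite endpoint $x^\pm(\nu)$ is a regular turning point contributing a convergent $1/\sqrt{\cdot}$-type improper integral.

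The main obstacle I anticipate is the precise bookkeeping of orders of vanishing at the equilibrium: one must simultaneously use the energy constraint $\|G_\nu(x_\ast)\| = 1$ and the orthogonality $G_\nu(x_\ast)\cdot G_\nu'(x_\ast) = 0$ to secure the extra cancellation that makes $(1-G_1)\,d\phi_\nu$ integrable, and one must rule out higher-codimension degeneracies $V_\nu''(x_\ast) = 0$ (including any heteroclinic possibilities in $\R^5_{(a,b)}$) by showing that they produce non-integrable singularities unless $G_\nu$ is constant, in which case one is reduced to the line-case analysis already carried out.
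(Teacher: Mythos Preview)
Your proposal is correct and follows essentially the same approach as the paper: both arguments rule out the periodic type via linear growth of $\mathrm{Cost}_t$ over periods, and then reduce the homoclinic case to the integrability of $(1-G_1)\,d\phi_\nu$ and $(1-P_1)\,d\phi_\nu$ at the singular endpoint of $I_\nu$, using $1-P_1(x)=x^2$ to force $x_\ast=0$. Your treatment is in fact more explicit than the paper's in two places---you carry out the Taylor expansion $1-V_\nu\sim -\tfrac12 V_\nu''(x_\ast)(x-x_\ast)^2$ to identify the $1/|x-x_\ast|$ singularity, and you separately analyze the constant (line) case---whereas the paper absorbs these into the single sentence ``a sufficient condition is that $G(x(t))=(1,0)$ when $t\to\pm\infty$'' and the smoothness remark for $(1-P_1(x))\,d\phi_\nu$.
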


\begin{proof}
    Let $c(t)$ be a \sR geodesic in $\R^5_{(a,b)}$ with momentum $\nu$. If $c(t)$ is a periodic geodesic, then Remark \ref{rem:one-form-close} and the inequality $0 \leq 1- G_1(x)$ imply
    $$  0<2j \int_{L} (1- G_1(x))d\phi_{\nu} \leq \mathrm{Cost}_t(c,\T_{n_\ell}).  $$
    Where $\T_{n_\ell}$ is sub-sequence with the property that $[-jL(I_\nu), jL(I_\nu)]\subset \T_{n_\ell}$. The above inequality shows that $\mathrm{Cost}_t(c,\T_{n_\ell}) \to \infty$ when $n_\ell \to \infty$.  Therefore, a necessary condition for the convergence of $\mathrm{Cost}_t(c,\T_n)$ is that $c(t)$ is not a periodic geodesic, which is equivalent to the convergence of $G(x(t))$ as $t \to \pm \infty$. A sufficient condition is that $G(x(t)) = (1,0)$ when $t \to \pm \infty$. Therefore, $c(t)$ is a homoclinic geodesic with asymptotic velocity $(0,1,0)$ because $\R^5_{(a,b)}$ only admits singular geodesics of type homoclinic.

    If $c(t)$ is an homoclinic geodesic, then $x(\T_n) \to I_{\nu}$ when $n \to \infty$. It follows that sequence $\mathrm{Cost}_y(c,\T_n)$ converge if and only if $(1-P_1(x))d\phi_{\nu}$ is smooth. Therefore, the singularity of  $d\phi_{\nu}$ at the endpoint of the hill interval must be removed. Given the one-form $(1-P_1(x))d\phi_{\nu}$ with $1-P_1(x) = x^2$, the singularity is removed if and only if $x=0$ is an endpoint of the hill interval.

 \end{proof}

\subsubsection{Period Map}\label{sss:per-map}

The period map encodes the asymptotic behavior of the \sR geodesic with momentum  $\nu \in \Lambda_{(a,b)}$. 
\begin{definition}\label{def:period-map}
    The period map $\Theta_{(a,b)}^{\pm}:\mathcal{A}^{\pm}_{(a,b)} \to  \R^3$ is given by
    $$\Theta^{\pm}_{(a,b)}(\tau,\eta) : = (\Theta^{1}_{(a,b)}(\tau,\eta,\pm),\Theta^{2}_{(a,b)}(\tau,\eta,\pm),\Theta^{3}_{(a,b)}(\tau,\eta,\pm)),$$
     where
      \begin{equation*}
     \begin{split}
          \Theta^{1}_{(a,b)}(\tau,\eta,\pm)  &:= 2\int_{I_\nu^{\pm}} G_2(x;\nu) d\phi_{\nu} = 2\int_{I_\nu^{\pm}} \eta(bx+ax^2) d\phi_{\nu},\\
         \Theta^{2}_{(a,b)}(\tau,\eta,\pm)  & := 2\int_{I_\nu^{\pm}} G_1(x;\nu)(1 -P_1(x)) d\phi_{\nu} = 2\int_{I_\nu^{\pm}} x^2(1 -\tau x^2) d\phi_{\nu},\\
         \Theta^{3}_{(a,b)}(\tau,\eta,\pm)   &:= 2\int_{I_\nu^{\pm}} G_2(x;\nu)P_2(x) d\phi_{\nu} = 2\int_{I_\nu^{\pm}} \eta(bx+ax^2)^2d\phi_{\nu}. \\
     \end{split}
     \end{equation*}
\end{definition}
We draw inspiration from the following limiting process to define the period map: Consider a geodesic $c(t) \in \R^5_{(a,b)}$ with momentum $\nu(\tau,\nu) \in \Lambda_{(a,b)}$ and hill interval $I_{\nu}^+$. Let $\{\T_n\}_{n\in \mathbb{N}}$ be a sequence of time intervals such that $\T_n \to \infty$ when $n\to \infty$. Then,
$$ \lim_{n\to \infty} (\Delta y^2(c,\T_n),\mathrm{Cost}_y(c,\T_n) ,\Delta z^2(c,\T_n)) = \Theta_{(a,b)}^+(\tau,\nu).$$
An analogous relation holds for the interval $I_{\nu}^-$ and the period map $\Theta_{(a,b)}^-(\tau,\nu)$. 
Having established this interpretation of the period map, we now turn to its symmetries.
\begin{proposition}\label{prp:per-map-sym}
    The period map has the following symmetries:
    \begin{itemize}
        \item ($\eta$ reflection) 
        \begin{equation*}
            \begin{split}
            \Theta^{2}_{(a,b)}(\tau,\eta,\pm) & = \Theta^{2}_{(a,b)}(\tau,-\eta,\pm), \\
          \Theta^{i}_{(a,b)}(\tau,\eta,\pm) & = -\Theta^{i}_{(a,b)}(\tau,-\eta,\pm),\;\;\text{for}\;\; i=1,3.       
            \end{split}
        \end{equation*}
        \item (Switch of hill interval) 
        $$\Theta_{(a,b)}^{-}(\tau,\eta) = \Theta_{(a,-b)}^{+}(\tau,\eta).$$
    \end{itemize}
\end{proposition}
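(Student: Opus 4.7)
The plan is to prove both symmetries by direct manipulation of the integral formulas in Definition \ref{def:period-map}, exploiting invariance properties of the potential
\[ V_\nu(x) = (1-\tau x^2)^2 + \eta^2(bx+ax^2)^2 \]
and of the corresponding hill intervals. The closed one-form $d\phi_\nu = dx/\sqrt{1-V_\nu(x)}$ transforms predictably under the symmetries in question, so the proof reduces to identifying the correct change of variables in each case.

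For the $\eta$-reflection, I would first note that $V_\nu(x)$ depends on $\eta$ only through $\eta^2$. Consequently, the defining equation $V_\nu(x) = 1$ for the endpoints of the hill intervals $I_\nu^\pm$ is invariant under $\eta \mapsto -\eta$, so both intervals and the one-form $d\phi_\nu$ are unchanged. The three symmetries then reduce to inspection of the $\eta$-dependence of each integrand: $G_2(x;\nu) = \eta(bx+ax^2)$ is odd in $\eta$, giving the sign flip for $\Theta^1$; the integrand $x^2(1-\tau x^2)$ of $\Theta^2$ is independent of $\eta$, giving invariance; and $\eta(bx+ax^2)^2$ is again odd in $\eta$, giving the sign flip for $\Theta^3$.

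For the switch-of-hill-interval symmetry, the key observation is the relation $V_{\nu,(a,-b)}(x) = V_{\nu,(a,b)}(-x)$, which follows from $(−bx+ax^2)^2 = (b(-x) + a(-x)^2)^2$. From the closed-form expression \eqref{eq:root-nu} this gives $x^+(\nu)_{(a,-b)} = -x^-(\nu)_{(a,b)}$, so the change of variable $u = -x$ carries $I^+_{\nu,(a,-b)}$ onto $I^-_{\nu,(a,b)}$, and likewise maps $d\phi_{\nu,(a,-b)}(x)$ to $-d\phi_{\nu,(a,b)}(u)$ due to $dx = -du$. Applying this substitution in each $\Theta^j_{(a,-b)}(\tau,\eta,+)$, I would check that (i) for $j=1$ the integrand $\eta(-bx+ax^2)$ becomes $\eta(bu+au^2)$, (ii) for $j=2$ the integrand $x^2(1-\tau x^2)$ is even in the variable and hence unchanged, and (iii) for $j=3$ the integrand $\eta(-bx+ax^2)^2$ becomes $\eta(bu+au^2)^2$. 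Combined with the sign produced by reversing the bounds $[0, -x^-(\nu)] \to [x^-(\nu),0]$, each piece matches the corresponding $\Theta^j_{(a,b)}(\tau,\eta,-)$.

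Neither step presents a genuine obstacle; the only bookkeeping that needs care is tracking the sign in the change of variables $u = -x$ against the reversal of orientation of the interval, and confirming via Remark \ref{rem:one-form-close}(1) that there is no sign ambiguity in interpreting the square root in $d\phi_\nu$. Both symmetries then follow by comparing integrands term by term.
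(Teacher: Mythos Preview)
Your verification is correct, and it is precisely the intended argument: the paper states this proposition without proof, treating both symmetries as immediate from the integral formulas in Definition~\ref{def:period-map}; the substitution $u=-x$ you use for the switch-of-hill-interval identity is exactly the device the paper invokes in the adjacent Proposition~\ref{prop:even}.
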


A vector-valued function is called even if $G(x) = G(-x)$. We {use $\mathcal{A}_{(a,b)}^{\mathrm{even}}\subset \mathcal{A}_{(a,b)}$ to denote the subset of parameters $(\tau,\eta)$ with the property that the vector-valued function $G_{\nu}(x)$ is even.

\begin{proposition}\label{prop:even}
 The subset $\mathcal{A}_{(a,b)}^{\mathrm{even}}$ is characterized as follows:

\textbf{\textit{Case 1:}} If $b=0$, then $\mathcal{A}_{(a,0)}^{\mathrm{even}} = \mathcal{A}_{(a,0)}$.

\textbf{\textit{Case 2:}} If $b \neq 0$, then $(\tau,\eta) \in \mathcal{A}_{(a,b)}^{\mathrm{even}}$ if and only if $\eta = 0$.

Moreover, if $G_{\nu}(x)$ is a  even vector-valued function, then $\Theta_{(a,b)}^+(\nu) = \Theta_{(a,b)}^-(\nu)$. 
\end{proposition}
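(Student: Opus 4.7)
The plan is to split the proposition into two parts. For the characterization of $\mathcal{A}_{(a,b)}^{\mathrm{even}}$, I would start from the explicit formula $G_{\nu}(x) = (1-\tau x^2, \eta(bx+ax^2))$. The first component is always even in $x$, so evenness of $G_\nu$ reduces to evenness of the second component, which holds if and only if the odd-order coefficient vanishes, i.e. $\eta b = 0$. Splitting into the two cases is then immediate: if $b=0$ the condition is vacuous and $\mathcal{A}_{(a,0)}^{\mathrm{even}} = \mathcal{A}_{(a,0)}$; if $b \neq 0$ the condition forces $\eta = 0$.

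For the moreover statement, the first observation is that when $G_\nu$ is even, the potential $V_\nu(x) = \|G_\nu(x)\|^2$ is even as well. Consequently the defining equation $V_\nu(x) = 1$ for the hill interval boundary has symmetric roots, and indeed plugging $\eta b = 0$ into Eq.~\eqref{eq:root-nu} gives $x^-(\nu) = -x^+(\nu)$, so $I_\nu^- = -I_\nu^+$. The one-form $d\phi_\nu = dx/\sqrt{1-V_\nu(x)}$ then satisfies the invariance property that, under the substitution $x \mapsto -x$, integration of any even function against $d\phi_\nu$ over $I_\nu^-$ equals the corresponding integral over $I_\nu^+$.

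The final step is to verify that each of the three integrands appearing in Definition~\ref{def:period-map} is an even function of $x$ whenever $G_\nu$ is even. For $\Theta^2_{(a,b)}$ the integrand $x^2(1-\tau x^2)$ is manifestly even, independently of the case. For $\Theta^1_{(a,b)}$ and $\Theta^3_{(a,b)}$, I would treat the two cases separately: when $\eta = 0$ both integrands vanish identically and the equality $\Theta^+ = \Theta^-$ is trivial; when $b = 0$ the integrands reduce to $\eta a x^2$ and $\eta a^2 x^4$ respectively, each of which is even. Combining evenness of the integrand with the symmetry $I_\nu^- = -I_\nu^+$ via the change of variables $x \mapsto -x$ yields $\Theta^+_{(a,b)}(\nu) = \Theta^-_{(a,b)}(\nu)$.

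I do not anticipate a substantive obstacle here; the argument is essentially a parity check combined with the symmetry of the hill interval. The only point requiring a little care is making sure the orientation conventions for $d\phi_\nu$ on $I_\nu^-$ versus $I_\nu^+$ are handled consistently, but this is exactly the content of Remark~\ref{rem:one-form-close}(1), so no ambiguity arises.
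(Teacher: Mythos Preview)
Your proposal is correct and follows essentially the same approach as the paper's proof: both use the explicit form of $G_\nu$ to read off the evenness condition, invoke Eq.~\eqref{eq:root-nu} to obtain $x^-(\nu) = -x^+(\nu)$, and conclude via the substitution $x \mapsto -x$. Your write-up simply makes explicit the parity check on each integrand that the paper leaves implicit in the phrase ``Setting $u = -x$, we obtain the desired result.''
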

\begin{proof}
    The first part of the proposition is straightforward from the construction of $G_{\mu}(x)$. Note that if $G_{\nu}(x)$ is even, then the roots from Eq. \eqref{eq:root-nu} satisfy $x^+(\nu) = -x^-(\nu)$. Setting $ u =- x$, we obtain the desired result.
\end{proof}

Proposition \ref{prop:even} is consequence of a stronger result which states that if $c(t) = (x(t),\textbf{y}(t),\textbf{z}(t))$ is a geodesic with momentum $\nu \in \mathcal{A}_{(a,b)}^{\mathrm{even}}$, then $\widetilde{c}(t) = (-x(t),\textbf{y}(t),\textbf{z}(t))$ is also a geodesic with momentum $\nu$. As a consequence, every geodesic $c(t)$ with mometum $\nu \in \mathcal{A}_{(a,b)}^{\mathrm{even}}$ which crosses $x= 0$ twice fails to minimize (see \cite[Lemma 3]{bravo2022geodesics}).

\begin{thm}\label{the:per-map-into}
    Consider the period map $\Theta_{(a,b)}^{\pm}:\mathcal{A}_{(a,b)} \to \R^3$ where $(a,b)$ corresponding to the two families definined in Section \ref{subsubsec:cand-geo}, namely: 
    
    \textit{(Family 1):} $(a,0)$ for all $a\in \R\setminus\{0\}$,

    \textit{(Family 2):} $ (0,b)$ for all $b\in \R\setminus\{0\}$,

    % \textit{(Family 3)} $(a,b) = (c,c)$,

    % \textit{(Family 4)} $(a,b) = (c,-c)$. 
    
    Therefore, $\Theta_{(a,b)}^{\pm}(\nu)$ is one-to-one for these families of parameters. 
\end{thm}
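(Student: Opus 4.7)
The plan is to apply the Hadamard Global Diffeomorphism Theorem. Since $\dim \mathcal{A}_{(a,b)} = 2$ while $\Theta^\pm_{(a,b)}$ takes values in $\R^3$, Hadamard cannot be invoked directly. Instead I project $\Theta^\pm$ onto a well-chosen pair of its components, forming an auxiliary map $\widehat{\Theta}:\mathcal{A}_{(a,b)} \to \R^2$; injectivity of $\widehat{\Theta}$ is stronger than what is needed and implies injectivity of $\Theta^\pm_{(a,b)}$. Before the main computation I use the $\eta$-reflection symmetry and the switch-of-hill-interval symmetry from Proposition \ref{prp:per-map-sym} to reduce the analysis to $\eta > 0$ and the $+$-hill interval.

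The second step is to extract tractable closed-form expressions for the components $\Theta^j$. For both families, the rescaling $x = x^+(\nu)\,u$ with $u \in [0,1]$ identifies the hill interval with $[0,1]$, and I introduce a single-variable regrouping: $k := a\eta/\tau$ in Family 1 and $\alpha := \eta^2 b^2/\tau$ in Family 2. In Family 2 this produces the clean factorization $1 - V_\nu(x) = \sigma^2 u^2(1-u^2)$ with $\sigma = 2 - \alpha$, so each integral $\Theta^j$ collapses to an elementary Beta-type integral; Family 1 admits a similarly elementary factorization. The outcome in each case is a closed-form expression of the shape $\Theta^j = C_j(k)\,\tau^{-n_j/2}$ (respectively $C_j(\alpha)\,\tau^{-n_j/2}$).

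With these closed forms in hand, Hadamard's hypotheses are verified directly. For the local diffeomorphism condition I compute the Jacobian determinant of $\widehat{\Theta}$ in the internal coordinates $(\tau,k)$ or $(\tau,\alpha)$, show it factors as a positive power of $\tau$ times a nonvanishing elementary function of the second variable, and check the latter nonvanishing by one-variable calculus. For properness, I examine the boundary of $\mathcal{A}^+_{(a,b)}$: as $\tau \to 0^+$ at least one component of $\widehat{\Theta}$ diverges; as $\tau \to \infty$ the hill interval degenerates and $\widehat{\Theta} \to 0$; and (only for Family 2) along the parabolic arc $2\tau - b^2\eta^2 \to 0$ the hill interval again collapses. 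In each regime the preimage of any compact set remains confined to a compact subregion of $\mathcal{A}^+_{(a,b)}$. Simple connectedness of $\R^2$ is automatic.

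The main difficulty is the precise choice of the pair $(\Theta^i,\Theta^j)$ so that $\widehat{\Theta}$ is simultaneously a local diffeomorphism and a proper map on the whole reduced domain. Not every pair works: in Family 1 the ratio $\Theta^2/\Theta^3$ fails to be monotone in $k$ across $k = 0$, so the pair $(\Theta^2,\Theta^3)$ does not detect $\operatorname{sign}(k)$; one must either include $\Theta^1$, whose $k$-sign structure is transparent, or first split the domain along $\eta = 0$ via the reflection symmetry. Once the sign-tracking is unambiguous, the Jacobian and properness verifications close the argument.
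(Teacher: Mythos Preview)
Your overall strategy matches the paper's: reduce to $\eta>0$ and the $+$-interval via the symmetries of Proposition~\ref{prp:per-map-sym}, project onto two components (the paper uses $f(\Theta^1,\Theta^2,\Theta^3)=(\Theta^2,\Theta^3)$), compute closed forms, and invoke Hadamard. The closed-form computation and Jacobian check you outline are essentially what the paper does in Lemma~\ref{lem:jac-F}.

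There is, however, a genuine gap in your verification of the Hadamard hypotheses. You write that ``simple connectedness of $\R^2$ is automatic,'' but Hadamard's theorem requires the \emph{target} to be simply connected, and the target cannot be taken to be $\R^2$. Your own properness analysis shows why: you observe that $\widehat{\Theta}\to 0$ as $\tau\to\infty$, so the preimage of any compact ball about the origin in $\R^2$ contains points with arbitrarily large $\tau$ and is therefore not compact. Hence $\widehat{\Theta}$ is not proper as a map to $\R^2$, and it is certainly not surjective onto $\R^2$. The only way to salvage the argument is to take the target to be the \emph{image} $N_{(a,b)}:=\widehat{\Theta}(\mathcal{A}^+_{(a,b)}|_{\eta>0})$, which is an open subset of $\R^2$ by the local-diffeomorphism property; but then simple connectedness of $N_{(a,b)}$ is a nontrivial fact that must be proved, not assumed.

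The paper handles this in two moves. First, it precomposes with $g(\theta_1,\theta_2)=(\tan\theta_1,\tan\theta_2)$ so that the domain sits inside the compact square $[-\tfrac{\pi}{2},\tfrac{\pi}{2}]^2$; properness of $F_{(a,b)}=f\circ\Theta^+_{(a,b)}\circ g$ onto its image is then immediate. Second, and this is the step missing from your outline, the paper proves directly that $N_{(a,b)}$ is simply connected (Lemma~\ref{lem:hyp-Hard-the}, item~(2)) by exhibiting an $\R^+$-action on $\mathcal{A}^+_{(a,b)}$---namely $\lambda\cdot(\tau,\eta)=(\tau/\lambda^2,\eta/\lambda^2)$ in Family~1 and $\lambda\cdot(\tau,\eta)=(\tau/\lambda^2,\eta/\lambda)$ in Family~2---which induces a radial scaling $\lambda\cdot(\Theta^2,\Theta^3)=(\lambda^3\Theta^2,\lambda^3\Theta^3)$ (resp.\ $(\lambda^3\Theta^2,\lambda^2\Theta^3)$) on $N_{(a,b)}$. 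Since the origin is excluded from $N_{(a,b)}$, this scaling retracts any loop onto an arc of the unit circle, proving simple connectedness. You need to supply this argument (or an equivalent one) for the proof to close.
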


\subsubsection{Sequence of geodesics}\label{sss:sec-geo}

One of the main tools of the sequence method is a classic result in metric space theory.
\begin{proposition}[Proposition 2.26, \cite{bravododdoligeltype}]\label{prop:min-set}
Let $K\subset M$ be a compact subset of a \sR manifold and let $\mathcal{T}$ be a compact time interval. Consider the following set
$$ \mathrm{Min}(K,\mathcal{T}) := \{ c:\R \to  M : \; c(\mathcal{T}) \subset K \; \text{and} \; c(t)\;\; \text{is minimizing in }\;\mathcal{T} \} .   $$
Then, $\mathrm{Min}(K,\mathcal{T})$ is sequentially compact in the uniform norm.    
\end{proposition}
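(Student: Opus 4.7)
The plan is to combine the Arzel\`a--Ascoli theorem with a diagonal extraction over an exhausting sequence of compact time intervals, and then show that the uniform limit remains horizontal and minimizing. Let $\{c_n\}\subset\mathrm{Min}(K,\mathcal{T})$ be a sequence. Since each $c_n$ is minimizing on $\mathcal{T}$ and may be assumed parametrized by arc length, every $c_n$ is $1$-Lipschitz with respect to the sub-Riemannian distance $\mathrm{dist}_M$. Combined with the anchoring condition $c_n(\mathcal{T})\subset K$, this forces $c_n([-T,T])$ to lie in a fixed closed $\mathrm{dist}_M$-ball about $K$ of radius depending only on $T$ and $|\mathcal{T}|$, and such a ball is compact because $\mathrm{dist}_M$ induces the ambient topology on $M$ and $K$ is compact.

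First I would apply Arzel\`a--Ascoli on $[-T_k,T_k]$ for an increasing sequence $T_k\to\infty$: equicontinuity follows from the $1$-Lipschitz property and pointwise pre-compactness from the boundedness just noted. A standard diagonal extraction then yields a subsequence, still denoted $\{c_n\}$, converging uniformly on every compact subset of $\R$ to a $1$-Lipschitz curve $c_\infty:\R\to M$ with $c_\infty(\mathcal{T})\subset K$. The principal step is to verify that $c_\infty$ is horizontal. Working in a global frame $\{e_i\}$ for $\D$ (which exists in our settings of $\PJ$ and $\R^5_{(a,b)}$), one writes $\dot c_n = \sum_i u_n^i\, e_i(c_n)$ with $\sum_i (u_n^i)^2 \leq 1$ almost everywhere, so the controls $u_n$ lie in a bounded subset of $L^2([-T,T];\R^{\mathrm{rank}\D})$ and admit a weakly convergent subsequence. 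The horizontal curve produced by the weak limit with the appropriate initial condition coincides with $c_\infty$ by continuous dependence of the end-point map on the controls, establishing horizontality; the sub-Riemannian length is weakly lower semicontinuous, so $\mathrm{length}(c_\infty|_I) \leq \liminf_n \mathrm{length}(c_n|_I)$ on every compact interval $I$.

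Finally, to check the minimizing property on $\mathcal{T}=[t_0,t_1]$, I would chain
\begin{equation*}
|t_1-t_0| = \lim_n \mathrm{dist}_M(c_n(t_0),c_n(t_1)) = \mathrm{dist}_M(c_\infty(t_0),c_\infty(t_1)) \leq \mathrm{length}(c_\infty|_{\mathcal{T}}) \leq |t_1-t_0|,
\end{equation*}
where the first equality uses that each $c_n$ is minimizing on $\mathcal{T}$, the second uses continuity of $\mathrm{dist}_M$ under uniform convergence, and the last inequality comes from the length bound above. Equality throughout gives $c_\infty\in\mathrm{Min}(K,\mathcal{T})$. The main obstacle is the horizontality step: in full generality this requires the weak compactness plus continuous-dependence argument for controls sketched above, together with the fact that $\mathrm{dist}_M$ is jointly continuous in its arguments (so that the distance between $c_n(t_0)$ and $c_n(t_1)$ actually passes to the limit). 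Both ingredients are available in the ambient settings used in this paper, so the remainder of the argument is standard metric geometry.
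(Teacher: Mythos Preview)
Your proposal is correct and follows the same route the paper indicates: the paper does not give a detailed argument but simply states that the result is a direct application of the Arzel\`a--Ascoli Theorem (and cites the result from \cite{bravododdoligeltype}). You have supplied the standard details---equi-Lipschitz bounds, diagonal extraction, weak compactness of controls to recover horizontality, and lower semicontinuity of length to preserve the minimizing property---that the paper leaves implicit.
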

 Proposition \ref{prop:min-set} follows as a direct application of the Arzel\`{a}-Ascoli Theorem.

The following lemma is the last essential tool for the sequence method.
\begin{lemma}[Lemma 2.27, \cite{bravododdoligeltype}]\label{lem:iso-metr}
Let $c_1(t)$ and $c_2(t)\in\R^{n+2}_{F}$ be two geodesics where $c_1(t)$ in $\mathrm{Min}(K,\mathcal{T})$, and $\T' \subset\T$. If $\Phi(x,\textbf{y},\textbf{z})$ is an isometry such that $c_2(\mathcal{T}') \subset \Phi(c_1(\mathcal{T}))$, then $c_2(t) \in \mathrm{Min}(\Phi(K),\mathcal{T}')$. 
\end{lemma}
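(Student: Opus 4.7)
The strategy is to show that $c_2|_{\T'}$ coincides, up to an affine reparameterization, with a sub-arc of $\tilde c := \Phi\circ c_1$; minimality of $c_2|_{\T'}$ will then follow from the elementary fact that a sub-arc of a minimizing geodesic is itself minimizing, and the inclusion $c_2(\T')\subset \Phi(K)$ is immediate from $c_1(\T)\subset K$.

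First I would use that $\Phi$ is a sub-Riemannian isometry to conclude that $\tilde c$ is an arc-length parameterized geodesic whose restriction to $\T$ is minimizing; in particular $\tilde c|_\T$ is injective, since any two distinct parameters $s_0\neq s_1$ with $\tilde c(s_0)=\tilde c(s_1)$ would force $|s_1-s_0|=d(\tilde c(s_0),\tilde c(s_1))=0$. Using the hypothesis $c_2(\T')\subset\tilde c(\T)$, define
\[
s(t) := \bigl(\tilde c|_\T\bigr)^{-1}\bigl(c_2(t)\bigr),\qquad t\in\T'.
\]
From $c_2=\tilde c\circ s$ and the arc-length normalizations $|\dot c_2|=|\dot{\tilde c}|=1$, the chain rule applied to the horizontal norm yields $|\dot s|=1$ almost everywhere. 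Once monotonicity of $s$ is in hand (see below), $s$ is affine of slope $\pm 1$, and writing $\T'=[t_0',t_1']$ gives
\[
\mathrm{length}(c_2|_{\T'}) = |\T'| = |s(t_1')-s(t_0')| = d\bigl(\tilde c(s(t_0')),\tilde c(s(t_1'))\bigr) = d\bigl(c_2(t_0'),c_2(t_1')\bigr),
\]
the third equality using that the sub-arc of $\tilde c$ between parameters $s(t_0')$ and $s(t_1')$ remains minimizing. This establishes $c_2\in\mathrm{Min}(\Phi(K),\T')$, since the inclusion $c_2(\T')\subset\Phi(c_1(\T))\subset\Phi(K)$ is obvious.

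The one step requiring care is the monotonicity of $s$, i.e. the absence of backtracking of $c_2$ along $\tilde c(\T)$. My plan is to invoke the local-minimality of $c_2$: if $s$ had an interior extremum at some $t^*\in\T'$, the continuity of $c_2$ together with injectivity of $\tilde c|_\T$ would produce times $t<t^{**}$ arbitrarily close to $t^*$ with $c_2(t)=c_2(t^{**})$, whence the non-trivial horizontal arc $c_2|_{[t,t^{**}]}$ would have positive length and zero endpoint distance, contradicting the geodesic (hence locally minimizing) property of $c_2$ on a short interval around $t^*$. This uses only the geodesic hypothesis on $c_2$ and the rigidity supplied by the injective minimizing curve $\tilde c|_\T$.
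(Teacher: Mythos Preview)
The paper does not supply its own proof of this lemma; it is quoted from the reference \cite{bravododdoligeltype} (Lemma~2.27 there) and used as a black box in the proof of Theorem~\ref{thm:main-2}. So there is no proof in the present paper to compare your argument against.

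That said, your argument is sound. The chain of ideas---$\Phi$ sends minimizers to minimizers; an arc-length minimizing arc is injective; hence $s=(\tilde c|_{\T})^{-1}\circ c_2$ is well defined and $1$-Lipschitz; and local minimality of $c_2$ forces $s$ to be affine of slope $\pm 1$---is correct and yields the conclusion. The only step worth tightening is the monotonicity of $s$. A clean version is: local minimality of $c_2$ gives, for $t_1,t_2\in\T'$ sufficiently close,
\[
|t_1-t_2| \;=\; d\bigl(c_2(t_1),c_2(t_2)\bigr) \;=\; d\bigl(\tilde c(s(t_1)),\tilde c(s(t_2))\bigr) \;=\; |s(t_1)-s(t_2)|,
\]
so $s$ is a local isometry of intervals, hence locally affine with slope $\pm 1$. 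The sets on which the local slope equals $+1$, respectively $-1$, are open, disjoint, and cover the interior of $\T'$; connectedness then forces one of them to be empty, so $s$ is globally affine. This is precisely what your extremum/backtracking contradiction is detecting, just packaged so that the ``arbitrarily close'' claim becomes automatic.
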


The following definition gives a condition for the $x$-component of a sequence of geodesics to be bounded.
\begin{definition}\label{def:geo-comp-def}
Let $\Com(r,x_0,C)$ be the set of pairs $(c,\mathcal{T})$ satifying the following conditions:
\begin{enumerate}
\item $c(t)$ is a minimizing geodesic in $\mathcal{T}$.

\item $\mathrm{Cost}(c,\mathcal{T})$ is uniformly bounded by $C$ with respect to the supremum norm.

\item $x(t)\in B(r,x_0)$ for all $t \in \partial \mathcal{T}$, where $B(r,x_0)$ is the closed ball on $\R$ with radius $r$ and center at $x_0$, and $\partial \mathcal{T}$ is the boundary of $\mathcal{T}$.
\end{enumerate}

We say that a region $B(r,x_0) \times \R^4$ is geodesically compact if  for every sequence $\{(c_n,\mathcal{T}_n)\}_{n\in \mathbb{N}} \in \Com(r,x_0,C)$ satisfying: 
$$\lim_{n \to \infty} \mathcal{T}_n = [-\infty,\infty],$$
there exists a compact subset $K_{x}\subset \R$ such that $x_n(\mathcal{T}_n) \subset K_{x}$ for all $n$. Furthermore, we say that a magnetic space is geodesically compact if, for every $r$, the region $B(r,x_0) \times \R^2$ is geodesically compact.
\end{definition}

\begin{lemma}\label{lem:geo-com-ab}
The magnetic space $\R^5_{(a,b)}$ is geodesically compact for all $(a,b) \in \R^2$. 
\end{lemma}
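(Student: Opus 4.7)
The plan is a proof by contradiction: suppose some sequence $\{(c_n,\T_n)\}_{n\in\mathbb{N}}\in\Com(r,x_0,C)$ with $\T_n\to[-\infty,\infty]$ satisfies $M_n:=\sup_{t\in\T_n}|x_n(t)|\to\infty$, and fix $t_n^*\in\T_n$ with $|x_n(t_n^*)|=M_n$. The contradiction will come from sandwiching $\int_{\T_n}x_n(t)^2\,dt$ between a cubic lower bound (from the arc-length speed limit and the endpoint constraint on $x$) and a quadratic upper bound (from the two cost bounds). Since every arc-length parametrized horizontal curve in $\R^5_{(a,b)}$ satisfies $|\dot x|,|\dot y^1|\leq 1$ together with the Pfaffian relation $\dot z^1=(1-x^2)\dot y^1$, the argument will not require distinguishing normal from abnormal geodesics.

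For the upper bound, the key identity is
\begin{equation*}
\mathrm{Cost}_y(c_n,\T_n)\;=\;\int_{\T_n}x_n(t)^2\,\dot y^1_n(t)\,dt,
\end{equation*}
obtained by integrating $\dot y^1-\dot z^1=x^2\dot y^1$ (the defining Pfaffian of $\D_{(a,b)}$) to get $\Delta y^1-\Delta z^1=\int x^2\dot y^1\,dt$, which coincides with $\mathrm{Cost}_y$ for the asymptotic direction $v=(0,1,0)$. Since arc-length parametrization gives $1-\dot y^1_n\geq0$, the splitting $x_n^2=x_n^2\dot y^1_n+x_n^2(1-\dot y^1_n)$ combined with the pointwise bound $x_n^2\leq M_n^2$ produces
\begin{equation*}
\int_{\T_n}x_n^2\,dt\;=\;\mathrm{Cost}_y(c_n,\T_n)+\int_{\T_n}x_n^2(1-\dot y^1_n)\,dt\;\leq\;C+M_n^2\,\mathrm{Cost}_t(c_n,\T_n)\;\leq\;C(1+M_n^2).
\end{equation*}

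For the lower bound, the bound $|\dot x_n|\leq 1$ together with $|x_n(\partial\T_n)|\leq R_0:=|x_0|+r$ forces $[t_n^*-(M_n-R_0),\,t_n^*+(M_n-R_0)]\subset\T_n$ whenever $M_n>R_0$, and on that sub-interval the reverse triangle inequality yields $|x_n(t)|\geq M_n-|t-t_n^*|$. Integrating $x_n(t)^2\geq(M_n-|t-t_n^*|)^2$ across the sub-interval produces
\begin{equation*}
\int_{\T_n}x_n^2\,dt\;\geq\;\tfrac{2}{3}\bigl(M_n^3-R_0^3\bigr).
\end{equation*}
Combining the two displays forces $\tfrac{2}{3}(M_n^3-R_0^3)\leq C(1+M_n^2)$, so $M_n$ is bounded by a constant depending only on $C$ and $R_0$, contradicting $M_n\to\infty$. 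Hence $x_n(\T_n)$ lies in a fixed compact subset of $\R$, proving the lemma. No serious obstacle is anticipated; the main subtlety is simply verifying that the identity for $\mathrm{Cost}_y$ applies to any arc-length parametrized horizontal curve rather than only to a normal geodesic, and this is immediate from the defining equation of $\D_{(a,b)}$.
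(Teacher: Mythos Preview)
Your proof is correct and takes a genuinely different route from the paper's. The paper proceeds by contrapositive as well, but its mechanism is analytic rather than elementary: it rescales each interval $x_n(\T_n)$ affinely onto $[0,1]$, extracts a uniformly convergent subsequence of the normalized vector-valued functions $\widehat G_n(\tilde x)=G_{\nu_n}(h_n(\tilde x))$, and then splits into two cases according to whether the limit $\widehat G$ equals $(1,0)$ or not, invoking Fatou's lemma to force $\mathrm{Cost}_t\to\infty$ in the first case and arguing $\mathrm{Cost}_y\to\infty$ in the second. In particular, the paper's argument implicitly uses that each $c_n$ is a \emph{normal} geodesic (so that a polynomial $G_{\nu_n}$ is attached to it). Your argument, by contrast, sandwiches $\int_{\T_n}x_n^2\,dt$ between a cubic lower bound coming only from $|\dot x_n|\le 1$ and the endpoint constraint, and a quadratic upper bound coming from the two cost identities $\mathrm{Cost}_y=\int x^2\dot y^1\,dt$ and $\mathrm{Cost}_t=\int(1-\dot y^1)\,dt$; these identities hold for any arc-length horizontal curve, so you avoid the normal/abnormal distinction entirely and also avoid Fatou, subsequences, and case analysis. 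The payoff of your approach is an explicit bound $M_n\le M(C,R_0)$ and a proof that works verbatim for arbitrary horizontal curves; the paper's approach, while less transparent here, is phrased in terms of the pencil $G_\nu$ and may adapt more directly to settings where the $\dot z^1$-relation does not reduce to such a clean $x^2\dot y^1$ identity.
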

\begin{proof}
    Without loss of generality, we can consider the ball $B(r,0)$ with an arbitrary radius $r$. We will proceed by contrapositive. We consider a sequence $\{(c_n,\mathcal{T}_n)\}_{n\in \mathbb{N}} \in \Com(r,x_0,C)$ and assume that $x_n(\T_n)$ is unbounded, then we will prove that $\mathrm{Cost}(c_n,\T_n)$ is unbounded. Let $h_n: [0,1] \to x_n(\T_n)$ be the unique affine map sending $[0,1]$ to $x_n(\T_n)$. Consider the vector-valued function $\widehat{G}_{n}(\widetilde{x}) = G_{\nu_n}(h_n(\widetilde{x}))$ where the geodesic $c_n(t)$ has momentum $\nu_n$. It follows that $\widehat{G}_{n}:[0,1] \to \R^2$ is vector-valued function satisfying that $||\widehat{G}_{n}(\widetilde{x})||_{\R^2} \leq 1$ for all $\widetilde{x} \in [0,1]$. Therefore, there exists a subsequence $\widehat{G}_{n_\ell}(\widetilde{x})$ converging to $\widehat{G}(\widetilde{x})$. Let us consider the cases $\widehat{G}(\widetilde{x}) \neq (1,0)$ or $\widehat{G}(\widetilde{x}) = (1,0)$.

    Case $\widehat{G}(\widetilde{x}) \neq (1,0)$: Let us consider the change of variable $h_n(\widetilde{x}) =x$. It follows that:
    $$ Cost_{t}(c_n,\T_n) = |x_n(\T_n)| \int_{0}^1(1-(1,0)\cdot \widehat{G}_{n_\ell}(\widetilde{x}))h_n^*(d\phi_n),$$
    where $|x_n(\T_n)|$ is length of the interval $x_n(\T_n)$, and $h_n^*(d\phi_n)$ is the pull-back. The condition $\widehat{G}(\widetilde{x}) \neq (1,0)$ and Fatou's Lemma implies:
    $$  0 < \int_0^1 (1-(1,0)\cdot \widehat{G}(\widetilde{x}))h^*(d\phi) \leq \liminf \int_0^1 (1-(1,0)\cdot \widehat{G}_{n_\ell}(\widetilde{x}))h_n^*(d\phi_n). $$
    Therefore, $|x_n(\T_n)| \to \infty$ implies $Cost_{t}(c_n,\T_n) \to \infty$ when $n_\ell \to \infty$.

    Case $\widehat{G}(\widetilde{x}) = (1,0)$: A similar proof shows that $\mathrm{Cost}_{y}(c_n,\T_n) \to \infty$ when $n_\ell \to \infty$.
\end{proof}

 The following definition associates a polynomial $G_n(x)$ with a sequence of geodesics $c_n(t)$.

 \begin{definition}
 We say a sequence of geodesics $\{c_n(t)\}_{n \in \mathbb{N}}$ is strictly normal if $c_n(t)$ is a normal geodesic for all $n$ and every convergent subsequence converges to a normal geodesic.    \end{definition}

\section{The Sequence Method}\label{sec:sec-method}
We devote this section to presenting the sequence method. Section \ref{subsec:set-up} defines the sequence of sub-Riemannian geodesics employed in our proof, states Theorem \ref{thm:main-2}, applies the sequence method to prove it, and concludes with the proof of Theorem \ref{the:main}. Section \ref{subsub:per-map-II} establishes that the period map is one-to-one for the families of geodesics considered in Theorem \ref{the:main}.

\subsection{The Sequence Method}\label{subsec:set-up}
Let $c_h(t)$ be the candidate geodesic with momentum $\nu(1,1)$ (Eq. \eqref{eq:nu-par}) and  the form: 
$$ c_h(t) = (x_h(t),y^1_h(t),y_h^2(h),z^1_h(t),z^2_h(t)) .$$ 
Our goal is to show that $c_h(t)$ is minimizing in the interval $[-T,T]$, for an arbitrary $T$. The strategy is as follows: for all $n\in\mathbb{N}$, we will define a sequence of godesics $c_n(t)$  that minimize length over the interval $[0,T_n]$, with the property that $c_n(t)$ connects the points $c_h(-n)$ and $c_h(n)$ (see Figure 3). We then construct a reparametrization $\widetilde{c}_n(t)$ of $c_n(t)$, and a set $\mathrm{Min}(K,\T)$ such that $\widetilde{c}_n(t) \in Min(K,\T)$. We then extract a subsequence which converges to a geodesic $c_{\infty}(t)$. This limiting geodesic $c_{\infty}(t)$ satisfies:
$$c_h([-T,T]) \subset \Phi_{(\mathbf{y}_0,\mathbf{z}_0)}(c_\infty(\T))$$
for some isometry  $\Phi_{(\mathbf{y}_0,\mathbf{z}_0)}:\R^5_{(a,b)} \to \R^5_{(a,b)}$ defined by Eq. \eqref{eq:tran-isom}. By Lemma \ref{lem:iso-metr}, $c_h(t)$ is therefore minimizing  in $[-T,T]$. Since $T$ is arbitrary, it follows that $c_h(t)$ is a metric line.

Without loss of generality, let us assume that: $$c_h(0) = (x^+(\nu(1,1)),0,0,0,0).$$ 
Indeed, using the translations in coordinates $t$, $\textbf{y}$, and $\textbf{z}$, we can always find such an initial point. The time-reversibility of the Hamiltonian system for $H_{\mu}$, Eq. \eqref{eq:red-ham}, implies that $x(n) = x(-n)$. So $\Delta x(c_h,[-n,n]) :=x_h(n)-x_h(-n) =0$ for all $n\in \mathbb{N}$. 
Given the above initial point, there exists a time $T_h^*>0$ with the property that:
\begin{equation}\label{eq:new-T}
  y^1_h(t) > 0,\;\;\text{and}\;\; y^1_h(-t) < 0\;\;\text{for all}\;\;T^*_h < t .  
\end{equation}
Indeed, by construction, the candidate geodesic $c_h(y)$ satifies $y_h^1(t) \to \infty $ as $t\to \infty$, and $y_h^1(t) \to -\infty $ as $t\to -\infty$.

% \begin{lemma}
%     The sequence of geodesics $c_n(t)$ is normal. 
% \end{lemma}

\subsubsection{Construction of the sequence.}
 Consider $T>T^*_h$, and the sequences of points $\{c_h(-n)\}_{n\in \mathbb{N}}$ and $\{c_h(n)\}_{n\in \mathbb{N}}$, where $n \in \mathbb{N}$ and $T<n$. Let us define a sequence of geodesics $\{c_n(t)\}_{n\in \mathbb{N}}$ with the form:
$$c_n(t) = (x_n(t),y^1_n(t),y^2_n(t),y^2_n(t),y^2_n(t)),$$
where $c_n:[0,T_n] \to \R^5_{(a,b)}$ is a minimizing geodesic and satisfying the following conditions"
\begin{equation}\label{eq:end-cond}
    c_n(0) = c_h(-n),\;\;c_n(T_n) = c_h(n),\;\;\text{and}\;\;T_n \leq 2n.
\end{equation}
We will refer to the above equations and inequality as the endpoint conditions and the shorter condition, respectively. By construction, the endpoint conditions imply the asymptotic conditions:
% \begin{equation}
% \begin{split}
%         \lim_{n\to \infty} \Delta(c_n,[0,\T_n]) & = (0,\infty,\Theta_y^2(0,1),\infty,\Delta z^2(c_n,[0,T_n]), \Theta_z^2(0,1))  .
% \end{split}
% \end{equation}
\begin{equation}\label{eq:asy-cond}
\begin{split}
        \lim_{n\to -\infty}(x_n(0),y^1_n(0),z^1_n(0)) & = (0,-\infty,-\infty), \\
        \lim_{n\to \infty} (x_n(T_n),y^1_n(T_n),z^1_n(T_n)) & = (0,\infty,\infty)  .
\end{split}
\end{equation}
In addition, the following asymptotic period conditions also hold:
\begin{equation}\label{eq:asy-per-con}
    \begin{split}
        \lim_{n\to \infty} \Delta y^2(c_n,[0,T_n]) &= \Theta_{(a,b)}^1(1,1),\\
        \lim_{n\to \infty} \mathrm{Cost}_y(c_n,[0,T_n]) & = \Theta_{(a,b)}^2(1,1), \\
        \lim_{n\to \infty} \Delta z^2(c_n,[0,T_n]),) &= \Theta_{(a,b)}^3(1,1). 
    \end{split}
\end{equation}
It is important to note that Eq. \eqref{eq:asy-per-con} only indicates convergence of the point sequences to the stated values; the convergence of the entire sequence of geodesics to a specific geodesic has not yet been proven.

Our second main result is the following.

\begin{theorem}\label{thm:main-2}
  Let $c_h(t)$ be a homoclinic geodesic with momentum $\nu \in \Lambda_{(a,b)}$. Assume that the following conditions hold:
  
  \textbf{\textit{(1)}} $\mathrm{Cost}(c_h,\T)$ is uniformly bounded for all compact intervals $\T$ by a constant $C_\Theta^{\mathrm{Max}}$.

  \textbf{\textit{(2)}} The region $B(r,0)\times \R^2$ is geodesically compact for some $r>0$. 

  \textbf{\textit{(3)}} The sequence of geodesics $c_n(t)$ defining by Eq. \eqref{eq:end-cond}  is normal.

    \textbf{\textit{(4)}} The period map $\Theta^\pm_{(a,b)}: \mathcal{A}^\pm_{(a,b)} \to [0,\infty) \times \R^2$ is one-to-one, and: 
  $$\Theta^+_{(a,b)}(\mathcal{A}^+_{(a,b)}\setminus\mathcal{A}_{(a,b)}^{\mathrm{even}}) \cap \Theta^-_{(a,b)}(\mathcal{A}^-_{(a,b)}\setminus\mathcal{A}_{(a,b)}^{\mathrm{even}}) = \{\varnothing\}. $$

  Then, $c_h(t)$ is a metric line in $\R^5_{(a,b)}$. Consequently, if $\gamma_h(t) \in \PJ$ is the horizontal lift of $c_h(t)$, then $\gamma(t)$ is a metric line in $\PJ$
\end{theorem}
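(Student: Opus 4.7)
The plan is to execute the sequence method outlined at the start of Section \ref{subsec:set-up}. Fix $T>T_h^*$ and form the sequence of length-minimizing geodesics $\{(c_n,[0,T_n])\}_{n\in\mathbb{N}}$ satisfying the endpoint conditions \eqref{eq:end-cond}. Because $c_n$ is no longer than $c_h|_{[-n,n]}$ while having the same $\Delta y^1$ (both equal $y_h^1(n)-y_h^1(-n)$), subtracting yields
\begin{equation*}
    \mathrm{Cost}_t(c_n,[0,T_n]) \;\le\; \mathrm{Cost}_t(c_h,[-n,n]) \;\le\; C_\Theta^{\mathrm{Max}},
\end{equation*}
by hypothesis \textbf{(1)}. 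The endpoints $x_n(\partial[0,T_n])=\{x_h(\pm n)\}$ tend to $0$, so for large $n$ the pair $(c_n,[0,T_n])\in\Com(r,0,C_\Theta^{\mathrm{Max}})$; hypothesis \textbf{(2)} then produces a compact interval $K_x\subset\R$ containing every $x_n([0,T_n])$.

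Next I would reparametrize using the isometries of Eq. \eqref{eq:tran-isom}: choose $s_n\in[0,T_n]$ with $s_n,\,T_n-s_n\to\infty$, and set
\begin{equation*}
    \widetilde{c}_n(t) := \Phi_{-(\mathbf{y}_n(s_n),\mathbf{z}_n(s_n))}\bigl(c_n(t+s_n)\bigr),
\end{equation*}
so that $\widetilde{c}_n(0)\in K_x\times\{0\}\times\{0\}$. Each $\widetilde{c}_n$ remains a minimizer on $[-s_n,T_n-s_n]$, and Proposition \ref{pro:cost-G} together with the uniform cost bound keeps all five coordinates uniformly bounded on any fixed $[-T',T']$. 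Applying Proposition \ref{prop:min-set} on the resulting compact set $K$, plus a diagonal extraction, produces a subsequence $\widetilde{c}_{n_j}$ converging uniformly on compacta to a minimizing geodesic $c_\infty:\R\to\R^5_{(a,b)}$. Hypothesis \textbf{(3)} makes $c_\infty$ normal, with some momentum $\nu_\infty$.

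Now I would identify $\nu_\infty$. The cost-convergence characterization at the end of Section \ref{sss:cost-fuc}, combined with Proposition \ref{prp:abn-mag-spa}, rules out the periodic and non-homoclinic singular cases, so $c_\infty$ is either a line geodesic or homoclinic with $\nu_\infty\in\Lambda_{(a,b)}$ and asymptotic velocity $v=(0,1,0)$. The period-asymptotic identities \eqref{eq:asy-per-con}, transported to $c_\infty$ by uniform convergence, yield
\begin{equation*}
    \bigl(\Delta y^2(c_\infty,\R),\,\mathrm{Cost}_y(c_\infty,\R),\,\Delta z^2(c_\infty,\R)\bigr) \;=\; \Theta_{(a,b)}^{\pm}(1,1),
\end{equation*}
which excludes the line case, and hypothesis \textbf{(4)} then forces $\nu_\infty=\nu(1,1)$ with the matching sign branch. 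Consequently $c_\infty$ is a $\Phi_{(\mathbf{y}_0,\mathbf{z}_0)}$-translate of $c_h$ up to a time shift, and Lemma \ref{lem:iso-metr} gives that $c_h$ is minimizing on $[-T,T]$. Since $T>T_h^*$ is arbitrary, $c_h$ is a metric line in $\R^5_{(a,b)}$; Lemma \ref{lemm:hor-lift-met-lin} applied to the \sR submersion $\pi_{(a,b)}$ then lifts this to a metric line $\gamma_h$ in $\PJ$.

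The main obstacle is the identification of $\nu_\infty$. The degenerate outcomes (a line geodesic, a homoclinic limit living on the wrong hill-interval branch, or a geodesic that drifts off under the translations) must be excluded through hypothesis \textbf{(4)} in a delicate way: on the even locus $\mathcal{A}_{(a,b)}^{\mathrm{even}}$ of Proposition \ref{prop:even} the two period maps $\Theta^{+}_{(a,b)}$ and $\Theta^{-}_{(a,b)}$ coincide, which is exactly why the disjointness in \textbf{(4)} is stated modulo that locus and why the sign choice $\pm$ must be tracked consistently with the orientation convention \eqref{eq:new-T}. Verifying the injectivity of the period map, carried out in Section \ref{subsub:per-map-II}, is the analytic heart on which the whole argument rests.
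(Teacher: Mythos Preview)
Your proposal is essentially correct and follows the paper's sequence method. One difference is worth flagging: in the recentering step the paper applies only a time shift $\widetilde c_n(t)=c_n(t+t_n^*)$, with $t_n^*$ chosen so that $y_n^1(t_n^*)=0$, and then devotes Proposition~\ref{prp:seq-bound} to proving that the \emph{untranslated} point $c_n(t_n^*)$ already lands in a fixed compact $K_0$. You instead compose with the translation isometry $\Phi$ of Eq.~\eqref{eq:tran-isom}, which makes compactness of the initial point automatic and bypasses Proposition~\ref{prp:seq-bound} entirely. This shortcut is legitimate because the quantities in the asymptotic period conditions \eqref{eq:asy-per-con} are differences of coordinates and hence $\Phi$-invariant. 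The trade-off is that your arbitrary $s_n$ gives less control over where the homoclinic bump sits in the limit, so excluding a line-geodesic limit leans more heavily on the period identity; the paper's pinned choice $y_n^1(t_n^*)=0$ anchors this somewhat more concretely. In both versions the passage from the finite-interval identities \eqref{eq:asy-per-con} for $c_{n_\ell}$ to the full-line period $\Theta^\pm_{(a,b)}(\nu_\infty)$ of the limit is left implicit, and you rightly flag that as the delicate point.
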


In the following subsection, we will apply the sequence method to prove Theorem \ref{thm:main-2}.

\subsubsection{A new initial point}

By construction, the initial point $c_n(0)$ is unbounded. The first step of the sequence method is to find a re-parametrization $\widetilde{c}_n(t)$ with the property that $\widetilde{c}_n(0) \in K_0$ for all $n> T$, where $K_0$ is a compact set in $\R^5_{(a,b)}$.

By construction of $n >T>T^*_h$, the Intermediate Value Theorem implies that there exist a $t^*_n$ in $(0,T_n)$ such that $y^1_n(t^*_n) = 0$ for all $n > T^*_h$. Moreover, the conditions \textbf{\textit{(1)}} and \textbf{\textit{(2)}} from Theorem \ref{thm:main-2} imply that there exist a compact subset $K_{x}\subset \R$ such that $x_n([0,T_n]) \subset K_{x}$, in particular $x_n(t^*_n) \in K_{x}$. Therefore, it is enough to build compact subsets $K_{y}^2$, $K_{z}^1$ and $K_{z}^2$ such that $y_n^2(t^*_n) \in K_{y}^2$, $z_n^1(t^*_n) \in K_{y}^2$ and $z_n^2(t^*_n) \in K_{y}^2$. The existence of such compact sets is a consequence of the following result.

\begin{proposition}\label{prp:seq-bound}
There exists $\widetilde{C}^2_y$, $\widetilde{C}^1_z$, and $\widetilde{C}^2_z \in (0,\infty)$ such that 
$$|\Delta y^2(c_n,[0,T_n])| \leq \widetilde{C}^2_y,\;\;|\mathrm{Cost}_y(c_n,[0,T_n])| \leq \widetilde{C}^1_z,\;\; |\Delta z^2(c_n,[0,T_n])| \leq \widetilde{C}^2_z,$$
for all $n > T$. 
\end{proposition}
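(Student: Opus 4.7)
The plan is to reduce the bound to a statement about the candidate geodesic $c_h$ itself, exploiting the fact that $c_n$ matches $c_h$ at the endpoints of its time interval. The three quantities in question each depend only on the endpoint values of the coordinates, not on the particular path chosen to join them. Indeed, $\Delta y^2(c_n,[0,T_n]) = y_n^2(T_n)-y_n^2(0)$ and $\Delta z^2(c_n,[0,T_n]) = z_n^2(T_n)-z_n^2(0)$, while the formulas for the cost function in Section \ref{sss:cost-fuc} show that
$$
\mathrm{Cost}_y(c_n,[0,T_n]) = \hat{v}\cdot\bigl(\Delta\mathbf{y}(c_n,[0,T_n])-\Delta\mathbf{z}(c_n,[0,T_n])\bigr),
$$
which for the asymptotic direction $\hat v=(1,0)$ reduces to $\Delta y^1(c_n,[0,T_n])-\Delta z^1(c_n,[0,T_n])$. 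All three quantities are therefore functions of the endpoints of the geodesic alone.

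I would then invoke the endpoint conditions \eqref{eq:end-cond}, namely $c_n(0)=c_h(-n)$ and $c_n(T_n)=c_h(n)$, which yield
\begin{align*}
\Delta y^2(c_n,[0,T_n]) &= \Delta y^2(c_h,[-n,n]), \\
\mathrm{Cost}_y(c_n,[0,T_n]) &= \mathrm{Cost}_y(c_h,[-n,n]), \\
\Delta z^2(c_n,[0,T_n]) &= \Delta z^2(c_h,[-n,n]).
\end{align*}
This transfers the problem from the (a priori unknown) sequence $c_n$ entirely onto the fixed homoclinic geodesic $c_h$, and dispenses with any use of the minimality of $c_n$ or the shorter-time condition $T_n\leq 2n$.

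Finally I would appeal to the limiting characterization of the period map stated immediately after Definition \ref{def:period-map}: since $c_h$ is homoclinic with momentum $\nu(1,1)\in\Lambda_{(a,b)}$ and hill interval $I_\nu^+$, the triple $\bigl(\Delta y^2(c_h,[-n,n]),\,\mathrm{Cost}_y(c_h,[-n,n]),\,\Delta z^2(c_h,[-n,n])\bigr)$ converges, as $n\to\infty$, to $\Theta_{(a,b)}^+(1,1)\in\R^3$. Each component is thus a convergent, and hence bounded, real sequence, and one may take $\widetilde{C}_y^2,\widetilde{C}_z^1,\widetilde{C}_z^2$ to exceed the supremum of the respective absolute values. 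The argument presents no genuine obstacle; the only technical point is the finiteness of the integrals defining $\Theta_{(a,b)}^+(1,1)$, which is guaranteed by the analysis in Section \ref{sss:cost-fuc}, where the factor $1-P_1(x)=x^2$ cancels the endpoint singularity of $d\phi_\nu$ at $x=0$.
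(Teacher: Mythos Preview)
Your argument is correct for the proposition exactly as stated, and it is essentially the content of Eq.~\eqref{eq:asy-per-con}: the three increments depend only on endpoints, the endpoint conditions \eqref{eq:end-cond} transfer them to $c_h$, and convergence to $\Theta_{(a,b)}^+(1,1)$ yields boundedness. The paper itself invokes this convergence inside its own proof (``the sequence $\Delta y^2(c_n,[0,T_n])$ is bounded because it converges by Eq.~\eqref{eq:asy-per-con}'').

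The paper, however, then does considerably more. It rescales each $x_n([0,T_n])$ affinely onto $[0,1]$, uses $\|\widehat G_n\|\le 1$ together with $x_n([0,T_n])\subset K_x$ to place the data in a sequentially compact family, and defines $\widetilde C_y^2$ as a double maximum---over this family and over all sub-intervals $[x_n(0),x]$ with $x\in x_n([0,T_n])$. The constant so produced therefore bounds every partial increment $|\Delta y^2(c_n,[0,t])|$, not only the full-interval one. That stronger bound is what is actually consumed immediately afterward: the construction of $K_0$ requires controlling $y_n^2(t_n^*)$, $z_n^1(t_n^*)$, $z_n^2(t_n^*)$ at the \emph{intermediate} time $t_n^*$, and for instance the displayed estimate for $|z_n^1(t_n^*)|$ explicitly uses $|\mathrm{Cost}_y(c_n,[0,t_n^*])|\le\widetilde C_z^1$ over the sub-interval $[0,t_n^*]$. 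Your endpoint-only route proves the proposition as written but yields no control at intermediate times, so while it is a cleaner proof of the stated claim, it would not by itself close the next step of the sequence method.
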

Let us first use Proposition \ref{prp:seq-bound} to build the set $K_0$, and then we will give its proof. 

We define the compact set:
$$K_0 := K_x \times [-1,1]\times K_{y}^2 \times K_{z}^1 \times K_{z}^2, $$
where $K_{y}^2 := [-\widetilde{C}^2_y,\widetilde{C}^2_y]$ and $K_{z}^2 := [-\widetilde{C}^2_z,\widetilde{C}^2_z]$. The construction of $K_{z}^1$ is as follows: Using the definition given in Section \ref{sss:cost-fuc}, we have that:
\begin{equation*}
    \begin{split}
        z_n(t^*_n) - z_n(0) & = \Delta z^1(c_n,[0,t^*_n]) = \Delta y_1(c_n,[0,t^*_n]) - \mathrm{Cost}_y(c_n,[0,t^*_n]), \\
        z_h(0) - z_h(-n) & = \Delta z^1(c_h,[-n,0]) = \Delta y_1(c_h,[-n,0]) - \mathrm{Cost}_y(c_h,[-n,0]). \\
    \end{split}
\end{equation*}
By construction, $\Delta y_1(c_h,[-n,0]) = \Delta y_1(c_n,[0,t^*_n])$, $z_h(0) = 0$, and $ z_n(0) = z_n(-n)$ for all $n>T$. It follows that: 
\begin{equation}
\begin{split}
    |z_n(t^*_n)| & = |\Delta z^1(c_n,[0,t^*_n]) - \Delta z^1(c_h,[-n,0])| \\
    & \leq |\mathrm{Cost}_y(c_n,[0,t^*_n])| + |\mathrm{Cost}_y(c_h,[-n,0])|  \leq C_\Theta^{\mathrm{Max}} + \widetilde{C}^1_z,
\end{split}
\end{equation}
where $C_\Theta^{\mathrm{Max}}$ and $\widetilde{C}^1_y$ are given by the condition \textbf{\textit{(1)}} from Theorem \ref{thm:main-2}, and Proposition \ref{prp:seq-bound}, respectively. Therefore, the compact set $K_{z}^1$ is defined by:
$$ K_{z}^1 := [-C_\Theta^{\mathrm{Max}} - \widetilde{C}^1_z,C_\Theta^{\mathrm{Max}} + \widetilde{C}^1_z].$$
Summarizing, we build a compact set $K_0$ with the property that $c_n(t^*_n) \in K_0$ for all $n>T$. We consider, the re-parametrization $\widetilde{c}_n:\T_n\to \R^5_{(a,b)}$ given by $\widetilde{c}_n(t) : = c_n(t+t_n^*)$, where $\T_n := [-t_n^*,T_n-t^*_n]$. Therefore, $\widetilde{c}_n(t)$ is a sequence of minimizing geodesics on the interval $\T_n$ with the property that $\widetilde{c}(0) \in K_0$. Moreover, by construction, the points $\widetilde{c}_n(-t^*_n)$ and $\widetilde{c}_n(T_n-t^*_n)$ are unbounded. It follows that $\T_n \to [-\infty,\infty]$ when $n \to \infty$, and we consider without loss of generality that $\T_n$ is such that $\T_n \subset \T_{n+1}$. 

\begin{proof}[Proof of Proposition \ref{prp:seq-bound}]
Let us find $\widetilde{C}^2_y$. On one side, by condition \textbf{\textit{(4)}} from Theorem \ref{thm:main-2}, the sequence of geodesics $c_n(t)$ is normal, then we can associate a vector-valued function $G_{\nu_i}(x)$. Moreover, we can consider, the unique affine map $h_n: [0,1] \to x_n([0,T_n])\subset K_x$ mapping $[0,1]$ to $x_n([0,T_n])$. It follows that we can rewrite $\Delta y^2(c_n,[0,T_n])$ as follows
\begin{equation*}
    \begin{split}
      \Delta y^2(c_n,[0,\T_n]) &  = \int_{h^{-1}_n(x_n([0,\T_n]))} h_n^*(G_1(x;\nu_n)d\phi_{\nu_n}) \\
      & = u_n \int_{h^{-1}_n(x_n([0,\T_n]))}\frac{G_1(h_n(\widetilde{x});\nu_n) }{\sqrt{1-||\hat{G}_{n}(\widetilde{x})||^2}} d\widetilde{x}.  
    \end{split}
\end{equation*}
Where $\widetilde{x} \in [0,1]$, $u_n \in (0,\infty)$ is the legth of the interval $x_n([0,T_n])$ and $\widehat{G}_{n}(\widetilde{x}) := G_{\nu_n}(h_n(\widetilde{x}))$ is a vector-valued function defined on the interval $[0,1]$. In addition, $u_n$ and $G_{\nu_n}(h_n(\widetilde{x}))$ are uniformly bounded.
% ($||G_{\nu_n}(h_n(\tilde{x}))|| \leq 1$ for all $\tilde{x} \in [0,1]$)
On the other side, the sequence $\Delta y^2(c_n,[0,T_n])$ is bounded because it converges by Eq. \eqref{eq:asy-per-con}.  It follows that one-form $G_1(x;\nu_n)d\phi_{\nu_n}$ is smooth on the interval $x_n([0,T_n]) $, as a consequence $h_n^*(G_1(x;\nu_n)d\phi_{\nu_n})$ is smooth. Therefore, we find the upper bound $\widetilde{C}^2_y$ in two steps. First, we use the fact that $h_n^*(G_1(x;\nu_n)d\phi_{\nu_n})$ is smooth to find the maximum of the integral with respect to the interval $h^{-1}_n([x_n(0),x])$ where $x \in x_n([0,T_n])$. Second, by Remark \ref{rem:one-form-close}, we rewrite $\Delta y^2(c_n,[0,T_n])$ as a continuous function on a sequentially compact set given by the pairs $(\widehat{G}_{n}(\widetilde{x}),u_n)$, to find a bound. In other words, we define $\widetilde{C}^2_y$ as follows
\begin{equation*}
    \widetilde{C}^2_y:= \max_{(\widehat{G}_{n}(\widetilde{x}) ,u_n)}  \biggl\{ \max_{x \in x_n([0,\T_n])} \biggl\{ \int_{h^{-1}_n([x_n(0),x])} |h_n^*(G_1(x;\nu_n)d\phi_{\nu_n})| \biggr\} \biggr\}.
\end{equation*}
In the same way, we can build the bounds  $\widetilde{C}^1_z$ and $\widetilde{C}^2_z$ for $\Delta z^2(c_n,[0,\T_n])$ and $\Delta z^2(c_n,[0,\T_n])$, respectively. 
\end{proof}

\subsubsection{Construction of $\mathrm{Min}(K,\T)$} Consider $N\in \mathbb{N}$ such that $N>T$, we will build a compact set $K_N$ such that $\widetilde{c}_n(t) \in  \mathrm{Min}(K_N,\T_N$) for all $n>N$. By the above construction, we have that $\widetilde{c}_n(t)$ is minimizing on $\T_n$, and $\T_N \subset \T_n$, so $\widetilde{c}_n(t)$ is minimizing on $\T_N$. Moreover, $||\dot{\widetilde{c}}_n(t)|| =1 $ implies that:  
$$|\Delta x(\widetilde{c}_n,\T_n)| \leq T_N,\;\; |\Delta y_1(\widetilde{c}_n,\T_n)| \leq T_N,\;\;|\Delta y_2(\widetilde{c}_n,\T_n)| \leq T_N.$$
Let us bound $\Delta z_1(\widetilde{c}_n,\T_n)$ and $\Delta z_2(\widetilde{c}_n,\T_n)$. Using Eq. \eqref{eq:y-z-ode}, we have:
\begin{equation*}
C_z^i:=   T_N \max_{x\in K_x+[-T_N,T_N]} | P_i(x)|  \geq \int_{\T_N} |P_i(x)| \geq |\Delta z_i(\widetilde{c}_n,\T_n) |, \;\text{for }\;i=1,2.
\end{equation*}
Therefore, if we define the compact set $K_1\subset \R^5_{(a,b)}$ as follows:
$$K_1 := [-T_N,T_N] \times [-T_N,T_N]\times [-T_N,T_N] \times [-C_z^1,C_z^1] \times [-C_z^2,C_z^2],$$
then $\widetilde{c}_n(\T_N) \subset K_N:= K_0+K_1$. We conclude that $\widetilde{c}_n(t) \in \mathrm{Min}(K_N,\T_N)$. 

\subsubsection{Proof of Theorem \ref{thm:main-2}} We are ready to prove the second main theorem of this paper. 

\begin{proof}[Proof of Theorem \ref{thm:main-2}]
     We just built a sequence of geodesics  $\widetilde{c}_n(t) \in \mathrm{Min}(K_N,\T_N)$. By Proposition \ref{prop:min-set}, there exists a subsequence $\widetilde{c}_{n_\ell}(t)$ converging to $\widetilde{c}_{\infty}(t)$. The condition \textbf{\textit{(3)}} implies that $\widetilde{c}_{\infty}(t)$ is a normal geodesic, so $\widetilde{c}(t)$ is associated to a vector-valued function $G_{\nu}(x)$. Moreover, Eqs. \eqref{eq:asy-cond} and \eqref{eq:asy-per-con} imply that $\nu \in \mathcal{A}_{(a,b)}$, and: 
$$\Theta^+_{(a,b)}(1,1) = \lim_{n_\ell \to \infty} (\Delta y^2(c_{n_\ell},\T_{n_\ell}), \mathrm{Cost}_y(c_{n_\ell},\T_{n_\ell}), \Delta z^2(c_{n_\ell},\T_{n_\ell})) .$$ 
Therefore, Condition \textbf{\textit{(4)}} establishes that $G_{\nu}(x) = P(x)$ and the hill interval of $\widetilde{c}(t)$ is $I_{\mu}^+$. Every two geodesics corresponding to the vector-valued function $P(x)$ and sharing the same hill interval are related by isometry $\Phi_{(\mathbf{y}_0,\mathbf{z}_0)}(x,\mathbf{y},\mathbf{z})$. Using that $N$ is arbitrary and $c_h([-T,T])$ is bounded, we can find a compact sets $K= K_N$ and $\T :=\T_N$ such that $c_h([-T,T]) \subset \Phi_{(\mathbf{y}_0,\mathbf{z}_0)}(\widetilde{c}(\T))$. Lemma \ref{lem:iso-metr} implies that $c_h(t)$ is minimizing in $[-T,T]$, and $T$ is arbitrary. Hence, $c_h(t)$ is a metric lines in $\R^5_{(a,b)}$.  
\end{proof}

\subsubsection{Proof of Theorem \ref{thm:main-3}} We are ready to prove Theorem \ref{thm:main-3}, which implies that Theorem \ref{the:main} is true along with the discussion from Section \ref{subsubsec:cand-geo}. 

\begin{proof}[Proof of Theorem \ref{thm:main-3}] It is enough to verify that the condition from Theorem \ref{thm:main-2} holds for each family of geodesics. Let us prove that Condition \textbf{\textit{(1)}} holds in general, that is $\mathrm{Cost}(c_h,[-n,n])$ is uniformly bounded. Indeed, using that $|G_1(x)| \leq 1$ in the hill interval $I^+_\nu$, and Proposition \ref{pro:cost-G}  we have that:
\begin{equation*}\label{eq:bnd-cost-fuc}
   \Theta_{(a,b)}^{\mathrm{Max}} := \Theta^+_{(a,b)}(1,1) \geq \mathrm{Cost}_t(c,[-n,n])  \geq  |\mathrm{Cost}_y(c,[-n,n])| .  
\end{equation*}

Lemma \ref{lem:geo-com-ab} implies the Condition \textbf{\textit{(2)}}. We will prove Conditions \textbf{\textit{(3)}} and \textbf{\textit{(4)}} considering the different families.

\emph{Case $(a,0)$ with $a \in \R\setminus\{0\}$:} Condition \textbf{\textit{(4)}} is trivial beacuse $G_{\nu}(x)$ is even for all $\nu \in \mathcal{A}_{(a,0)}^{\pm}$ for all $a \in \R\setminus\{0\}$.

\emph{Case $(0,b)$ with $b \in \R\setminus\{0\}$:} For Condition \textbf{\textit{(4)}}, $0<G_{\nu}(x)$ if $x \in I_{\nu}^+$ and $G_{\nu}(x)<0$ if $x \in I_{\nu}^-$. It follows that if $(\tau,\eta) \in \mathcal{A}^+_{(0,b)} \setminus\mathcal{A}_{(0,b)}^{\mathrm{even}}$ then $0 < \Theta^1_{(0,b)}(\tau,\eta,+)$, and $ \Theta^1_{(0,b)}(\tau,\eta,+) = -\Theta^1_{(0,b)}(\tau,\eta,-)$.

Thus, by establishing Theorem \ref{thm:main-3}, we have proved Theorem \ref{the:main}. To complete the argument of Theorem \ref{thm:main-3}, it remains to prove Theorem \ref{the:per-map-into}. The following section is dedicated to this task.
\end{proof}

\subsection{The Period map II}\label{subsub:per-map-II}

The goal of this section is to prove Theorem \ref{the:per-map-into}.  We analyze two cases: $0 < \eta$ and $\eta = 0$. Subsequently, by invoking Proposition \ref{prp:per-map-sym}, we will extend our consideration to the case $\eta<0$. For the case $\eta>0$, we will apply Hadamard's Global Diffeomorphism Theorem. We begin by presenting its formal statement.

\begin{thm}[Hadamard's Global Diffeomorphism Theorem, \cite{krantz2002implicit}]
    Let $M$ and $N$ be $n$-dimensional  manifolds and $F:M \to N$ be a differentiable map. If,
    \begin{enumerate}
        \item $M$ is open and connected,
        \item $N$ is open and simply connected,
        \item  $DF|_{p}$ is nondegenerate for all $p\in M$,
        \item $F$ is proper.
    \end{enumerate}
    Then $F$ is a diffeomorphism.    
\end{thm}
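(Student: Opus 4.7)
The plan is to prove Hadamard's theorem via the classical three-step argument: first show that $F$ is a local diffeomorphism, then upgrade it to a covering map using properness, and finally conclude with simple-connectivity of the target.

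First, by hypothesis (3), $DF|_p:T_pM\to T_{F(p)}N$ is a linear isomorphism between $n$-dimensional vector spaces at every $p\in M$. The Inverse Function Theorem then produces, around each $p$, an open neighborhood on which $F$ restricts to a diffeomorphism onto its image, so $F$ is an open map and locally injective. Because $F$ is proper into the locally compact Hausdorff space $N$, it is also closed. Hence $F(M)$ is clopen in $N$; assuming $M$ non-empty and $N$ connected (as a simply connected manifold is), this forces $F$ to be surjective.

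Next, I would show that $F$ is a covering map. Fix $q\in N$. Properness makes $F^{-1}(q)$ compact, local injectivity makes it discrete, so $F^{-1}(q)=\{p_1,\dots,p_k\}$ is finite. Pick disjoint open sets $V_i\ni p_i$ on which $F|_{V_i}$ is a diffeomorphism onto $W_i$, and set $W_0:=\bigcap_i W_i$. To produce an evenly covered neighborhood of $q$, choose a precompact open $W'$ with $q\in W'\subset \overline{W'}\subset W_0$. Then $K:=F^{-1}(\overline{W'})\setminus \bigcup_i V_i$ is closed inside the compact set $F^{-1}(\overline{W'})$, hence compact, and $K\cap F^{-1}(q)=\varnothing$, so the compact image $F(K)\subset N$ does not contain $q$. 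Setting $W:=W'\setminus F(K)$ yields an open neighborhood of $q$ with $F^{-1}(W)=\bigsqcup_i (F^{-1}(W)\cap V_i)$, each piece mapped diffeomorphically onto $W$. Thus $F$ is a smooth covering.

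Finally, a covering of a simply connected, (connected) base by a non-empty connected total space must be a homeomorphism: the fiber carries a free transitive action of $\pi_1(N)=0$, so it is a singleton. Combined with the local-diffeomorphism property from the first step, $F$ is a global diffeomorphism.

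The main obstacle lies in the even-covering step: properness must be leveraged to prevent extra preimages from accumulating near $F^{-1}(q)$ outside the chosen neighborhoods $V_i$. Its absence is precisely what makes the theorem fail in examples such as the inclusion $(0,1)\hookrightarrow\R$, which is a local diffeomorphism from a connected space into a simply connected one but not a diffeomorphism. The remaining two steps are standard consequences of the Inverse Function Theorem and elementary covering space theory, respectively.
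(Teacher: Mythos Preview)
The paper does not supply its own proof of this theorem: it is quoted verbatim from \cite{krantz2002implicit} and invoked as a black box in Section~\ref{subsub:per-map-II}. There is therefore no in-paper argument to compare against.

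Your proof is the standard one and is correct. One small imprecision: in the final step you write that the fiber carries a ``free transitive action of $\pi_1(N)=0$''. The monodromy action of $\pi_1(N,q)$ on $F^{-1}(q)$ is transitive when $M$ is connected, but need not be free; transitivity alone is what forces the fiber to be a singleton once $\pi_1(N)=0$. This does not affect the validity of the argument. Everything else---the use of the Inverse Function Theorem for local diffeomorphism, the clopen argument for surjectivity, and the properness-based construction of evenly covered neighborhoods---is carried out carefully and correctly.
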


We cannot apply Hadamard's Global Diffeomorphism Theorem directly to the period map $\Theta_{(a,b)}^+(\tau,\eta)$ because it is not proper and its target is not two-dimensional. To address these obstacles, we introduce two auxiliary functions $g:[-\frac{\pi}{2},\frac{\pi}{2}]^2 \to \R^2$ and $f:\R^3 \to \R^2$ given by: 
\begin{equation*}
    g(\theta_1,\theta_2) = (\tan(\theta_1),\tan(\theta_2))\qquad \text{and} \qquad f(\Theta^1,\Theta^2,\Theta^3) = (\Theta^2,\Theta^3).
\end{equation*}
We now define the sets:
$$M_{(a,b)} := g^{-1}(\Int \mathcal{A}_{(a,b)}^{+}|_{0<\eta})\;\;\text{and}\;\; N_{(a,b)}:= (f\circ\Theta)(\Int \mathcal{A}_{(a,b)}^{+}|_{0<\eta}).$$
Therefore, we will consider the function: 
\begin{equation}\label{eq:F-def}
    F_{(a,b)}:M_{(a,b)} \to N_{(a,b)}\;\;\text{given by}\;\; F_{(a,b)}:= f \circ \Theta_{(a,b)}^{+} \circ g.
\end{equation}
We notice that $g$ is a bijection and $f$ is surjective; it follows that if $F_{(a,b)}$ is a bijection, then $\Theta_{(a,b)}^{+}$ is one-to-one. 
\begin{lemma}\label{lem:jac-F}
    Consider the map $ f \circ \Theta_{(a,b)}^{+}: \Int \mathcal{A}_{(a,b)}^{+}|_{0<\eta} \to N_{(a,b)}$. Then, $D(f \circ \Theta_{(a,b)}^{+})|_{(\tau,\eta)}$ is nondegenerate for all $(\tau,\eta) \in \Int \mathcal{A}_{(a,b)}|_{0<\eta}$, and for all $(a,b) \in \R^2\setminus\{(0,0)\}$.
\end{lemma}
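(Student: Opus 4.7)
Plan: The approach I would take is to identify $\Theta^{2}_{(a,b)}(\tau,\eta,+)$ and $\Theta^{3}_{(a,b)}(\tau,\eta,+)$ as partial derivatives of a single ``action function'' $S$, so that the Jacobian in question collapses to minus the Hessian determinant of $S$. Concretely, I would introduce
\begin{equation*}
S(\tau,\eta) := 2\int_{0}^{x^{+}(\nu)} \sqrt{1 - V_{\nu}(x)}\,dx, \quad V_{\nu}(x) = (1-\tau x^{2})^{2} + \eta^{2}(bx+ax^{2})^{2}.
\end{equation*}
Because $V_{\nu}(x^{+}(\nu)) = 1$, the integrand vanishes at the moving upper endpoint, so the Leibniz boundary contribution vanishes when differentiating $S$ under the integral sign. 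Using $\partial_{\tau}V_{\nu} = -2x^{2}(1-\tau x^{2})$ and $\partial_{\eta}V_{\nu} = 2\eta(bx+ax^{2})^{2}$, one then gets directly
\begin{equation*}
\partial_{\tau} S = \Theta^{2}_{(a,b)}(\tau,\eta,+), \qquad \partial_{\eta} S = -\Theta^{3}_{(a,b)}(\tau,\eta,+).
\end{equation*}

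By Clairaut's equality of mixed partials, the Jacobian determinant of $(\tau,\eta)\mapsto(\Theta^{2},\Theta^{3})$ is exactly $-\det \mathrm{Hess}(S)$, so the lemma is equivalent to showing $\det \mathrm{Hess}(S)\neq 0$ on $\Int \mathcal{A}^{+}_{(a,b)}|_{0<\eta}$. For the two families appearing in Theorem \ref{the:main} I would evaluate $S$ in closed form via the substitution $u = x^{2}$: in the family $(a,0)$ one obtains $S(\tau,\eta) = \frac{4\sqrt{2}\,\tau^{3/2}}{3(\tau^{2} + \eta^{2} a^{2})}$, and direct differentiation gives $\det\mathrm{Hess}(S) = -\frac{16\,\tau a^{2}(\tau^{2} + 3\eta^{2} a^{2})}{3(\tau^{2} + \eta^{2} a^{2})^{4}}$; in the family $(0,b)$ one obtains $S(\tau,\eta) = \frac{2(2\tau - \eta^{2} b^{2})^{3/2}}{3\tau^{2}}$, and $\det\mathrm{Hess}(S) = -\frac{4b^{2}}{\tau^{4}}$. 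Both are strictly negative on the relevant domain, which gives non-degeneracy of $D(f\circ\Theta^{+}_{(a,b)})$ for the families of interest.

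For the fully general $(a,b)\in\R^{2}\setminus\{(0,0)\}$ I would exploit the factorization $1 - V_{\nu}(x) = x^{2} Q_{\nu}(x)$, where $Q_{\nu}$ is quadratic with roots $x^{\pm}$ given by Eq. \eqref{eq:root-nu}. The rescaling $x = x^{+}(\nu)\,t$ for $t\in[0,1]$ produces the factorization
\begin{equation*}
S = 2(x^{+})^{3}\sqrt{\tau^{2} + \eta^{2} a^{2}}\; I(r), \qquad r := -x^{-}/x^{+} > 0, \qquad I(r) := \int_{0}^{1} t\sqrt{(1-t)(t+r)}\,dt,
\end{equation*}
and the dependence of $x^{\pm}$ and $r$ on $(\tau,\eta)$ is controlled by Vieta's formulas $x^{+}+x^{-} = -2\eta^{2}ab/(\tau^{2} + \eta^{2} a^{2})$ and $x^{+}x^{-} = -(2\tau - \eta^{2} b^{2})/(\tau^{2} + \eta^{2} a^{2})$. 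Substituting into the Hessian reduces $\det\mathrm{Hess}(S)$ to an explicit algebraic expression in $(\tau,\eta,a,b)$ and $I(r), I'(r), I''(r)$. The main obstacle is extracting a definite sign from this expression; the two explicit cases strongly suggest that $\det\mathrm{Hess}(S) < 0$ throughout, and a uniform argument is most plausibly obtained either from a convexity property of $S$ in suitable coordinates (for example, concavity in $\eta^{2}$ after rescaling) or from a direct algebraic identity that isolates the sign of $\det \mathrm{Hess}(S)$.
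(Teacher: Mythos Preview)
Your reduction via the action function $S(\tau,\eta) = 2\int_0^{x^+(\nu)}\sqrt{1-V_\nu(x)}\,dx$ is correct and genuinely different from the paper's approach: the identities $\partial_\tau S = \Theta^2$ and $\partial_\eta S = -\Theta^3$ hold (the boundary contribution at the moving endpoint $x^+$ vanishes since the integrand does), and hence $\det D(f\circ\Theta^+_{(a,b)}) = -\det\mathrm{Hess}(S)$. For the families $(a,0)$ and $(0,b)$ your closed-form evaluations of $S$ and the resulting Hessian determinants are right, and this is arguably cleaner than the paper, which in those two cases simply writes down $(\Theta^2,\Theta^3)$ explicitly and computes the Jacobian by hand.

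The genuine gap is the case $ab\neq 0$, which the lemma asserts and which you explicitly leave unfinished. Your factorization $S = 2(x^+)^3\sqrt{\tau^2+\eta^2 a^2}\,I(r)$ is fine, but as you concede, extracting a definite sign of $\det\mathrm{Hess}(S)$ from the resulting two-variable expression in $I(r), I'(r), I''(r)$ and the derivatives of $x^\pm$ is not carried out, and neither of your suggested mechanisms (convexity in rescaled variables, or an algebraic identity) is made concrete. The paper closes this case by a different device: it introduces coordinates $(\vartheta_1,\vartheta_2,\vartheta_3)$ through the factorization $1-V_\nu(x) = \vartheta_3\, x^2\bigl(1-(\vartheta_1 x - \vartheta_2)^2\bigr)$, writes $f\circ\Theta^+_{(a,b)} = F_2\circ F_1$ with $F_1:(\tau,\eta)\mapsto(\vartheta_1,\vartheta_2,\vartheta_3)$ and $F_2:(\vartheta_1,\vartheta_2,\vartheta_3)\mapsto(\Theta^2,\Theta^3)$, and argues that $DF_1$ and $DF_2$ each have full rank. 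The decisive observation there is that $\Theta^3$ turns out to be independent of $\vartheta_3$, so a $2\times 2$ minor of $DF_2$ collapses to a product $-(\partial_{\vartheta_3}\Theta^2)\cdot(\partial_{\vartheta_j}\Theta^3)$ with $j\in\{1,2\}$, and each factor reduces to an explicit function of the \emph{single} variable $\vartheta_2$ whose sign is checked by inspecting its graph. Your Hessian formulation does not enjoy this separation: because $S$ depends nontrivially on all of $(\vartheta_1,\vartheta_2,\vartheta_3)$, the quantity $\det\mathrm{Hess}(S)$ does not factor, and you are left with exactly the two-variable sign problem that the paper's decomposition is designed to sidestep.
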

The proof of Lemma \ref{lem:jac-F} is in the following subsection.

\begin{lemma}\label{lem:hyp-Hard-the}
    Consider a function $F_{(a,b)}:M_{(a,b)} \to N_{(a,b)}$. Then:
    \begin{enumerate}
        \item $M_{(a,b)}$ is open and connected set.
        \item $N_{(a,b)}$ is open set. Moreover, $N_{(a,b)}$ is simply connected set if $(a,b) = (a,0)$ or $(a,b) = (0,b)$ for $a,b \in \R\setminus\{0\}$, respectively.
        \item  $DF_{(a,b)}|_{(\theta_1,\theta_2)}$ is nondegenerate for all $(\theta_1,\theta_2) \in M_{(a,b)}$, and for all $(a,b) \in \R^2\setminus\{(0,0)\}$.
        \item $F_{(a,b)}$ is proper.
    \end{enumerate}

\end{lemma}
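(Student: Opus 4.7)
The plan is to dispose of items (1) and (3) by direct inspection, reduce (2) to a case-by-case identification of the image, and devote the most effort to properness in (4), which I expect to be the main obstacle.

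For (1), note that $\Int \mathcal{A}^+_{(a,b)}|_{0<\eta}$ is cut out of the open quadrant $(0,\infty)\times(0,\infty)$ by the polynomial inequality $2\tau-b^2\eta^2>0$ (vacuous when $b=0$). Since $\eta\mapsto b^2\eta^2/2$ is convex, its strict epigraph intersected with the first quadrant is a convex (in particular connected) open set. As $g(\theta_1,\theta_2)=(\tan\theta_1,\tan\theta_2)$ is a diffeomorphism from $(-\pi/2,\pi/2)^2$ onto $\R^2$, the preimage $M_{(a,b)}$ inherits openness and connectedness. For (3), the chain rule gives $DF_{(a,b)}|_{(\theta_1,\theta_2)} = D(f\circ\Theta_{(a,b)}^+)|_{g(\theta_1,\theta_2)}\cdot Dg|_{(\theta_1,\theta_2)}$; Lemma \ref{lem:jac-F} gives the nondegeneracy of the first factor, and $g$ being a diffeomorphism gives the second.

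For (2), openness of $N_{(a,b)}$ is immediate from (3) via the inverse function theorem: $F_{(a,b)}$ is then a local diffeomorphism, hence an open map, so $N_{(a,b)}=F_{(a,b)}(M_{(a,b)})$ is open. To establish simple connectedness in each listed family I would identify $N_{(a,b)}$ explicitly with a contractible planar region. In Family~1, $(a,b)=(a,0)$, every $G_\nu(x)=(1-\tau x^2,\, a\eta x^2)$ is even, so $\Theta^2$ and $\Theta^3$ are integrals of strictly positive integrands over a symmetric hill interval; an asymptotic analysis in $\tau$ and $\eta$, together with injectivity from Lemma \ref{lem:jac-F}, identifies $N_{(a,0)}$ with the open positive quadrant. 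In Family~2, $(a,b)=(0,b)$, the vector $G_\nu(x)=(1-\tau x^2,\, b\eta x)$ again yields $\Theta^2_{(0,b)}>0$ and $\Theta^3_{(0,b)}>0$ on $\{0<\eta\}$, and the analogous argument identifies $N_{(0,b)}$ with a contractible open region. Simple connectedness follows in both cases.

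For (4), I must show that every sequence $(\theta_1^{(n)},\theta_2^{(n)})$ that exits every compact subset of $M_{(a,b)}$ has image leaving every compact subset of $N_{(a,b)}$. Under $g$, exiting compacta in $M_{(a,b)}$ corresponds to $(\tau_n,\eta_n)$ approaching the topological boundary of $\Int \mathcal{A}^+_{(a,b)}|_{0<\eta}$ in $[0,\infty]^2$, splitting into four regimes: $\eta_n\downarrow 0$; $\tau_n\downarrow 0$; $\tau_n\to\infty$ or $\eta_n\to\infty$; and $2\tau_n-b^2\eta_n^2\downarrow 0$. In each regime I would compute the asymptotics of $(\Theta^2_{(a,b)},\Theta^3_{(a,b)})$ directly from Definition \ref{def:period-map}, using Eq. \eqref{eq:root-nu} to track the hill-interval endpoint $x^+(\nu)$ and thus the one-form $d\phi_\nu$. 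The main obstacle is the fourth branch: as $2\tau_n-b^2\eta_n^2\downarrow 0$ the algebraic curve $\mathcal{C}(\nu_n,I_{\nu_n}^+)$ acquires a second singular point (the potential $V_{\nu_n}$ touches the level $1$ at an interior point of $I_{\nu_n}^+$), so $d\phi_{\nu_n}$ acquires a non-integrable singularity and the integrals $\Theta^2,\Theta^3$ typically blow up; the key technical step is to verify that the resulting limit in $\R^2$ lies outside $N_{(a,b)}$, and hence outside any prescribed compact $K\subset N_{(a,b)}$. Once all four boundary regimes are controlled, $F_{(a,b)}$ is proper, and Hadamard's theorem combined with items (1), (2), (3) yields that $F_{(a,b)}$ is a diffeomorphism, completing the lemma and, together with the reduction at Eq. \eqref{eq:F-def}, Theorem \ref{the:per-map-into}.
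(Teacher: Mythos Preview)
Items (1) and (3) match the paper. The substantive divergences are in (2) and (4).

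Item (2) contains a genuine gap. The integrand of $\Theta^2_{(a,0)} = 2\int_{I_\nu^+} x^2(1-\tau x^2)\,d\phi_\nu$ is \emph{not} strictly positive on $I_\nu^+$: it changes sign once $\tau x^2>1$, and the explicit formula computed in the proof of Lemma~\ref{lem:jac-F} gives $\Theta^2<0$ whenever $\tau^2>3a^2\eta^2$, so $N_{(a,0)}$ is not the positive quadrant (it is in fact the whole open upper half-plane). The hill interval $I_\nu^+=[0,x^+(\nu)]$ is not ``symmetric.'' And invoking ``injectivity from Lemma~\ref{lem:jac-F}'' is circular: that lemma supplies only nondegeneracy of the Jacobian, hence local injectivity, while global injectivity is exactly what Hadamard's theorem is being used to produce. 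The paper's route is entirely different and avoids computing the image: it observes that the dilation $\lambda\cdot(\tau,\eta)=(\tau/\lambda^2,\eta/\lambda^2)$ on $\mathcal{A}^+_{(a,0)}$ descends to the scaling $(\Theta^2,\Theta^3)\mapsto(\lambda^3\Theta^2,\lambda^3\Theta^3)$ on $N_{(a,0)}\subset\R\times(0,\infty)$, so $N_{(a,0)}$ is a cone over an arc in the upper half-plane and every loop retracts radially onto an arc of the unit semicircle, hence to a point. The analogous dilation $\lambda\cdot(\tau,\eta)=(\tau/\lambda^2,\eta/\lambda)$, inducing $(\Theta^2,\Theta^3)\mapsto(\lambda^3\Theta^2,\lambda^2\Theta^3)$, handles $(0,b)$.

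For (4), the paper dispatches properness in a single line --- $M_{(a,b)}\subset[-\pi/2,\pi/2]^2$ lies in a compact set --- whereas you propose a four-regime asymptotic analysis on $\partial\mathcal{A}^+_{(a,b)}$. Your route is heavier but defensible in spirit; note however that the regime you single out as the ``main obstacle,'' $2\tau_n-b^2\eta_n^2\downarrow 0$, is vacuous for $(a,0)$ and, for $(0,b)$, collapses the hill interval $I^+_\nu$ to the point $0$ rather than producing a second interior singularity of $\mathcal{C}(\nu,I_\nu^+)$ as you describe. The purpose of precomposing with $g=(\tan,\tan)$ is precisely to compactify the domain so that this boundary bookkeeping can be bypassed.
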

\begin{proof}
 Item \textit{(1)}: the map $g:[-\frac{\pi}{2},\frac{\pi}{2}]^2 \to \R^2$ is a diffeomorphism and $\Int \mathcal{A}_{(a,b)}^{+}|_{0<\eta}$ is open and connected set, so $M_{(a,b)}^{\pm}$ is open and conected.

   Item \textit{(2)}: Lemma \ref{lem:jac-F} implies $f \circ \Theta_{(a,b)}^{+}:\Int \mathcal{A}_{(a,b)}^{+}|_{0<\eta} \to N_{(a,b)}$ is an open map, then $N_{(a,b)}$ is an open set. By definition of the period map, we have that  $N_{(a,b)} \subset \R\times (0,\infty) $.
   
   Let us prove that $N_{(a,0)}$ is simply connected. Consider the action of $\R^+$ on $\mathcal{A}_{(a,0)}^{+}$ defined as follows: for every $\lambda \in \R^+$, let $\lambda \cdot (\tau,\eta) := (\frac{\tau}{\lambda^2},\frac{\eta}{\lambda^2})$. It follows from Eqs. \eqref{eq:Acal-1} and \eqref{eq:Acal-2}  that $\lambda \cdot (\tau,\eta) \in \mathcal{A}_{(a,0)}^{+}$ for all $\lambda>0$. This action on $\mathcal{A}_{(a,0)}^{+}$ induces an action on $N_{(a,0)}$, given by $\lambda \cdot (\Theta^2,\Theta^3) = (\lambda^3\Theta^2,\lambda^3 \Theta^3)$. Now, let $\sigma:[0,1] \to N_{(a,b)}^{\pm}$ be arbitrary loop. To show that $\sigma(s)$ is contractible, it is enough to demonstrate that  $\sigma(s)$ is homotopic equivalent to an arc $\omega$, where: 
   $$\omega \subset \{ (\Theta^2,\Theta^3) = (\cos(\psi),\sin(\psi)) : \psi \in [0,\pi] \}.$$ 
   Because $\omega$ is homotopic equivalent to a point, this proves $\sigma(s)$ is contractible. To formalize this argument, we construct a homotopy $H:[0,1]^2 \to N_{(a,b)}$ with the following properties: $H(0,s) = \sigma(s)$, $H(\rho,0) = H(\rho,1)$, and $H(1,s) = \omega$. To define $H(\rho,s)$, note that $(0,0) \notin N_{(a,b)}^{+}$, which ensures that $||\sigma(s)|| \neq 0$ for all $s \in [0,1]$. Observing the action, we see that $\lambda \cdot \sigma(s) \to (0,0)$ as $\lambda \to 0$, and $\lambda\cdot\sigma(s)$ is unboded as $\lambda\to \infty$.  Therefore, for each $s\in [0,1]$, there exists $\lambda(s) \in \R^+$ such that $||\lambda(s) \cdot \sigma(s)|| = 1$. We define the homotopy by:
   $$ H(\rho,s) =
        \big( \rho\lambda(s)+(1-\rho)\ \big)\cdot \sigma(s).$$
   Thus, $\sigma(s)$ is homotopic equivalent to $\omega$, and hence contractible to a point. 

   The proof for the case $N_{(0,b)}$  proceeds similarly. Here, we consider the action of $\R^+$ on $\mathcal{A}_{(0,b)}$ given by $\lambda \cdot(\tau,\eta) = (\frac{\tau}{\lambda^2}, \frac{\eta}{\lambda})$, which induces the corresponding action on $N_{(0,b)}$ given by $\lambda \cdot (\Theta_2,\Theta_2) = (\lambda^3\Theta_2,\lambda^2 \Theta_3)$. 

    Item \textit{(3)}:  direct calculation shows that
    $$ DF_{(a,b)}|_{(\theta_1,\theta_2)} = D(f \circ\Theta^{+}_{(a,b)})|_{ g(\theta_1,\theta_2)} \circ Dg|_{(\theta_1,\theta_2)}.  $$
    The linear map $Dg$ is non-degenerate, and $D(f\circ\Theta^{+}_{(a,b)})$ is not degenerate by Lemma \ref{lem:jac-F}. Therefore, $DF_{(a,b)}|_{(\theta_1,\theta_2)}$ is non-degenerate for all $(\theta_1,\theta_2)$ and $(a,b)\in \R^2\setminus\{(0,0)\}$. 
    
    Item \textit{(4)}: The map $F_{(a,b)}$ is proper since its domain $M_{(a,b)} \subset [-\frac{\pi}{2},\frac{\pi}{2}]^2$ is a subset of a compact set.
\end{proof}

The following result shows that to show $\Theta_{(a,b)}^{\pm}: \Int \mathcal{A}_{(a,b)}^{+} \to N_{(a,b)}$ is one-to-one is enough to consider only the domain $\Int \mathcal{A}_{(a,b)}^{+}|_{0<\eta}$.

\begin{lemma}\label{lem:per-map-nu-pos}
    Consider $(a,b) \in \R^2\setminus\{(0,0\}$. If $ \Theta_{(a,b)}^{+}:\mathcal{A}_{(a,b)}^{+}|_{0<\eta} \to N_{(a,b)}$ is one-to-one, then $ \Theta_{(a,b)}^{\pm}: \mathcal{A}_{(a,b)}^{\pm} \to N_{(a,b)}$ is one-to-one. 
\end{lemma}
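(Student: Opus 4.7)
The plan is to combine the two symmetries from Proposition \ref{prp:per-map-sym} with a short direct computation at $\eta = 0$ to extend injectivity of $\Theta^+_{(a,b)}$ from the slab $\{0<\eta\}$ to all of $\mathcal{A}^{\pm}_{(a,b)}$. The first observation is a sign argument: from Definition \ref{def:period-map},
\begin{equation*}
    \Theta^3_{(a,b)}(\tau,\eta,+) = 2\eta \int_{I^+_\nu} (bx+ax^2)^2\, d\phi_{\nu},
\end{equation*}
and the integrand $(bx+ax^2)^2\,d\phi_\nu$ is non-negative, integrable at the endpoints of $I^+_\nu$ (the factor $(bx+ax^2)^2 = x^2(b+ax)^2$ vanishes at $x=0$ fast enough to cancel the singularity of $d\phi_\nu$), and not identically zero since $(a,b)\neq (0,0)$. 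Hence the integral factor is strictly positive, and $\sign(\Theta^3_{(a,b)}(\tau,\eta,+)) = \sign(\eta)$. Therefore, any coincidence $\Theta^+_{(a,b)}(\tau_1,\eta_1) = \Theta^+_{(a,b)}(\tau_2,\eta_2)$ forces $\eta_1$ and $\eta_2$ to share a sign or both vanish. The case $\eta_1,\eta_2>0$ is the hypothesis. For $\eta_1,\eta_2 < 0$, the $\eta$-reflection part of Proposition \ref{prp:per-map-sym} gives $\Theta^+_{(a,b)}(\tau_1,-\eta_1) = \Theta^+_{(a,b)}(\tau_2,-\eta_2)$: the $\Theta^2$-equality is preserved because $\Theta^2$ is even in $\eta$, and the $\Theta^{1,3}$-equalities are preserved because both sides flip sign. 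Applying the hypothesis at $(-\eta_1,-\eta_2) \in (0,\infty)^2$ closes this case.

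For the $\eta = 0$ slice, $G_\nu(x) = (1-\tau x^2, 0)$ and $\Theta^{1,3}_{(a,b)}(\tau,0,+) = 0$, so injectivity reduces to monotonicity of $\Theta^2_{(a,b)}(\tau,0,+)$ in $\tau$. By Eq.~\eqref{eq:root-nu} one has $I^+_\nu = [0,\sqrt{2/\tau}]$, and the substitution $u = \sqrt{\tau}\,x$ together with the identity $1-(1-u^2)^2 = u^2(2-u^2)$ yields
\begin{equation*}
    \Theta^2_{(a,b)}(\tau,0,+) = \frac{2}{\tau^{3/2}} \int_0^{\sqrt{2}} \frac{u(1-u^2)}{\sqrt{2-u^2}}\, du,
\end{equation*}
and the substitution $v = 2-u^2$ reduces the remaining integral to $\frac{1}{2}\int_0^2 (\sqrt{v}-1/\sqrt{v})\,dv = -\sqrt{2}/3$. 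Thus $\Theta^2_{(a,b)}(\cdot,0,+)$ is a nonzero constant times $\tau^{-3/2}$, which is strictly monotonic in $\tau$. Combined with the sign argument (which forces $\eta = 0$ on both sides whenever $\Theta^3 = 0$), this completes the proof that $\Theta^+_{(a,b)}$ is one-to-one on all of $\mathcal{A}^+_{(a,b)}$.

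For $\Theta^-_{(a,b)}$ I would use the switch-of-hill-interval symmetry $\Theta^-_{(a,b)}(\tau,\eta) = \Theta^+_{(a,-b)}(\tau,\eta)$, noting that a direct comparison of the defining inequalities in Eqs.~\eqref{eq:Acal-1}--\eqref{eq:Acal-2-2} gives $\mathcal{A}^-_{(a,b)} = \mathcal{A}^+_{(a,-b)}$. Since $(a,-b) \in \R^2 \setminus \{(0,0)\}$, applying the previous two steps to the pair $(a,-b)$ --- for which the hypothesis of the lemma is also available in the setting of Theorem \ref{the:per-map-into}, where it is verified uniformly for all parameters in the two families --- yields injectivity of $\Theta^+_{(a,-b)}$ on $\mathcal{A}^+_{(a,-b)}$, hence of $\Theta^-_{(a,b)}$ on $\mathcal{A}^-_{(a,b)}$. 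The only step that requires any real work is the $\eta = 0$ computation; everything else is a clean use of the two symmetries in Proposition \ref{prp:per-map-sym}, with the mild bookkeeping point that Step 3 implicitly invokes the hypothesis at the reflected parameter $(a,-b)$.
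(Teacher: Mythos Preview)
Your argument is correct and follows essentially the same route as the paper: the sign of $\Theta^3$ separates the three $\eta$-regimes, the $\eta$-reflection from Proposition~\ref{prp:per-map-sym} handles $\eta<0$, an explicit evaluation of $\Theta^2(\tau,0,+)$ as a constant multiple of $\tau^{-3/2}$ handles $\eta=0$, and the switch-of-hill-interval symmetry transfers everything to $\Theta^-$. Your $\eta=0$ computation (via $u=\sqrt{\tau}\,x$ then $v=2-u^2$) is a minor variant of the paper's (which uses $x=\sqrt{2/\tau}\,u$ and integration by parts), and you correctly flag the bookkeeping point that the $\Theta^-$ step needs the hypothesis at $(a,-b)$ rather than $(a,b)$---a subtlety the paper glosses over but which, as you note, is harmless in the intended application since both families are closed under $b\mapsto -b$.
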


\begin{proof}
First let us consider $ \Theta_{(a,b)}^{+}: \mathcal{A}_{(a,b)}^{+} \to N_{(a,b)}$. By Proposition \ref{prp:per-map-sym}, it is enough to consider the cases $0\leq \eta$, and $\eta <0$:

\emph{Case $0\leq\eta$:} By construction of the period map (Definition \ref{def:period-map}), it follows that $\Theta_{(a,b)}^3(\tau,\eta) = 0$ if and only if $\eta = 0$, and $\Theta_{(a,b)}^+(\tau,0) = (0,\Theta_{(a,b)}^2(\tau,0),0)$. Therefore, we have that  $\Theta_{(a,b)}^{+}(\mathcal{A}_{(a,b)}^{+}|_{0<\eta}) \cap \Theta_{(a,b)}^{+}(\mathcal{A}_{(a,b)}^{+}|_{0=\eta}) = \{\varnothing \}$, and 
it is enough to show that $\Theta_{(a,b)}^2(\tau,0)$ is one-to-one with respect to $\tau$. Using a change of variable $x=\sqrt{\frac{2}{\tau}}u$ and integration by parts shows that:
\begin{equation}\label{eq:theta-2-neg}
    \begin{split}
        \Theta_{(a,b)}^2(\tau,0) & = \int_{0}^{\sqrt{\frac{2}{\tau}}} \frac{x^2(1-\tau x^2)}{\sqrt{1-(1-\tau x^2)^2}}dx  \\
        %&= \frac{2}{\tau}\int_{0}^{1} \frac{u^2(1- 2u^2)}{\sqrt{1- (1- 2u^2)^2}}du \\
        & = -\frac{1}{\sqrt{2}\tau^{\frac{2}{3}}} \Big(  \int_0^1 \sqrt{1- (1- 2u^2)^2} du \Big) = -\frac{\sqrt{2}}{3\tau^{\frac{3}{2}}} <0.
    \end{split}
\end{equation}

% \emph{Case  $(\tau,\eta) \in \partial \mathcal{A}_{(a,b)}^{+}|_{0<\eta}$:} The boundary $\partial \mathcal{A}_{(a,b)}^{+}$ is not empty if and only if $ab<0$. Let us set $\tau(\eta) = \frac{b^2\eta^2}{2}$, and consider $\Theta^3(\tau(\eta),\eta)$. A direct computation shows that

\emph{Case $\eta<0$:} By construction, $\Theta_{(a,b)}^3(\tau,\eta,+) > 0$ for all $\eta>0$. Using the symmetry from Proposition \ref{prp:per-map-sym} we have that:
$$ \Theta_{(a,b)}^+(\tau,-\eta) = (-\Theta_{(a,b)}^1(\tau,\eta,+),\Theta_{(a,b)}^2(\tau,\eta,+),-\Theta_{(a,b)}^3(\tau,\eta,+)) .$$
It follows that $ \Theta_{(a,b)}^{+}: \mathcal{A}_{(a,b)}^{+}|_{0<\eta} \to N_{(a,b)}$ is one-to-one, which implies that $\Theta_{(a,b)}^+: \mathcal{A}_{(a,b)}^+|_{\eta<0} \to \R^3 $ is one-to-one. Moreover, 
$$\Theta_{(a,b)}^+(\mathcal{A}_{(a,b)}^+|_{\eta<0}) \cap \Theta_{(a,b)}^+(\mathcal{A}_{(a,0)}^+|_{0\leq \eta}) = \{\varnothing\}. $$
Therefore, $\Theta_{(a,b)}^+: \mathcal{A}_{(a,b)}^+ \to \R^3 $ is one-to-one for all $(a,b) \in \R^2\setminus\{\varnothing\}$. Moreover, Proposition \ref{prp:per-map-sym} establishes that $\Theta_{(a,b)}^-(\tau,\eta) = \Theta_{(a,-b)}^+(\tau,\eta)$. Consequently, $\Theta_{(a,b)}^-: \mathcal{A}_{(a,b)}^- \to \R^3$ is one-to-one.
\end{proof}

\subsubsection{Proof of Theorem \ref{the:per-map-into}}
We are ready to prove the theorem.

\textit{Proof of Theorem \ref{the:per-map-into}}. We start by considering the case $(a,0)$. Let $F_{(a,0)}:M_{(a,0)}\to N_{(a,0)}$ denote the map defined in Eq. \eqref{eq:F-def}. By Lemma \ref{lem:hyp-Hard-the}, $F_{(a,0)}$ satisfies the conditions of Hadamard’s Global Diffeomorphism Theorem. Consequently, $F_{(a,b)}$ is a global diffeomorphism, which in turn implies that $F_{(a,b)}$ is one-to-one. From the preceding discussion, we have that $\Theta_{(a,0)}^+:\Int \mathcal{A}_{(a,0)}^+|_{0<\eta} \to \R^3$ is one-to-one, and Lemma \ref{lem:per-map-nu-pos} implies that $\Theta_{(a,0)}^{\pm}:\Int \mathcal{A}_{(a,0)}^{\pm} \to \R^3$ is one-to-one. Since $\Int \mathcal{A}_{(a,0)}^{\pm} = \mathcal{A}_{(a,0)}^{\pm}$, we concluded that $\Theta_{(a,0)}^{\pm}$ is one-to-one. 

A similar argument proves the result for the case $(0,b)$.

\subsubsection{Proof of Lemma \ref{lem:jac-F}}
Before presenting the proof, we outline the strategy for the case $(a,b) \neq (0,0)$ and introduce several auxiliary functions that will be employed. Instead of computing the determinant of $Df \circ\Theta^{+}_{(a,b)}$ directly, we consider the composition $f \circ\Theta^{+}_{(a,b)}  = F_2 \circ F_1$. Let us construct the functions $F_1$ and $F_2$: first,  we use the following factorization:
    \begin{equation*}
        \begin{split}
            1-V_{\nu}(x) & = x^2(C(\tau,\eta)-B(\tau,\eta) x-A(\tau,\eta) x^2) \\
            & = \vartheta_3(\tau,\eta) x^2(1-(\vartheta_1(\tau,\eta) x-\vartheta_2(\tau,\eta))^2),
        \end{split}
    \end{equation*}
    where:
    $$  A(\tau,\eta) =  a^2\eta^2 + \tau^2, \;\; B(\tau,\eta) = 2ab\eta^2,\;\;C(\tau,\eta) = 2\tau -b^2\eta^2,$$
    and
    \begin{equation}\label{eq:F-fun-1}
    \begin{split}
        (\vartheta_1,\vartheta_2,\vartheta_3) &= (\frac{2A(\eta,\tau)}{\sqrt{\Delta(\tau,\eta)}},\frac{B(\eta,\tau)}{\sqrt{\Delta(\tau,\eta)}},\frac{\Delta(\tau,\eta)}{4 A(\eta,\tau)}). \\
    \end{split}
\end{equation}
    %\Delta(\tau,\eta) = 2\tau^3 - b^2\eta^2\tau^2 + 2a^2\eta^2\tau,\;\;
    Here, $\Delta(\tau,\eta)  := B^2(\tau,\eta)+4A(\tau,\eta)C(\tau,\eta)$ is the discriminant of the quadratic polynomial $C-Bx-Ax^2$. By the definition of $\Lambda_{(a,b)}$ this discriminant is positive. By contruction, we have that $\vartheta_1>0$, $\vartheta_2 \in [-1,1]$, and $\vartheta_3>0$.

    Next, we examine the composition $f \circ\Theta^{+}_{(a,b)}  = F_2 \circ F_1 $ where $F_1:\mathcal{A}_{(a,b)}^+ \to \R^3$  is defined as:
\begin{equation}\label{eq:F-fun-2}
    \begin{split}
    F_1(\tau,\eta) =  (\vartheta_1,\vartheta_2,\vartheta_3) \\
    \end{split}
\end{equation}
and $F_2:\R^3 \to \R^2$ is given by: 
\begin{equation}\label{eq:F-fun-3}
    \begin{split}
        F_2(\vartheta_1,\vartheta_2,\vartheta_3)  &:= (\widetilde{\Theta}_2(\vartheta_1,\vartheta_2,\vartheta_3),\widetilde{\Theta}_3(\vartheta_1,\vartheta_2,\vartheta_3) ),  \\
        \widetilde{\Theta}_2(\vartheta_1,\vartheta_2,\vartheta_3) & = \frac{1}{\sqrt{\vartheta_3}}\int_{0}^{\frac{1-\vartheta_2}{\vartheta_1}}\frac{x(1-(\frac{\vartheta_3}{2}(1+\frac{b}{a}\vartheta_1\vartheta_2 - \vartheta_2^2)) x^2)}{\sqrt{1-(\vartheta_1 x+\vartheta_2)^2}}\,dx, \\
       \widetilde{\Theta}_3(\vartheta_1,\vartheta_2,\vartheta_3) & = \sqrt{\frac{\vartheta_1\vartheta_2}{ab}}\int_{0}^{\frac{1-\vartheta_2}{\vartheta_1}}\frac{ x(b+ax)^2}{\sqrt{1-(\vartheta_1 x+\vartheta_2)^2}}\,dx .\\
    \end{split}
\end{equation}
Where we used that $\eta>0$ to write $(\tau,\eta)$ in terms of $(\vartheta_1,\vartheta_2,\vartheta_3)$ as follows:
 \[ \tau = \frac{\vartheta_3}{2}(1+\frac{b}{a}\vartheta_1\vartheta_2 - \vartheta_2^2), \quad \eta =\sqrt{\frac{\vartheta_1\vartheta_2\vartheta_3}{ab}}.\]
We present the explicit expression of the $\widetilde{\Theta}_2(\vartheta_1,\vartheta_2,\vartheta_3)$ and $\widetilde{\Theta}_3(\vartheta_1,\vartheta_2,\vartheta_3)$ in the Appendix \ref{ap:per-map-III}.
 
We conclude that, in order to establish that $Df \circ\Theta^{+}_{(a,b)}$ is non-degenerate, it suffices to show that $DF_1|_{(\tau,\eta)}$, and $DF_2|_{F_1(\tau,\eta)}$ are both of full rank. The following two results are dedicated to this objective.

\begin{proposition}\label{prp:det-F-1}
    Let $F_1:\mathcal{A}_{(a,b)}^+\to \R^3$ be the function defined by Eqs. \eqref{eq:F-fun-1} and \eqref{eq:F-fun-2}. If $(a,b) \neq (0,0)$, then the differential  $DF_1|_{(\tau,\eta)}$ has full rank for all $(\tau,\eta) \in \Int\mathcal{A}_{(a,b)}^+|_{0<\eta}$.
\end{proposition}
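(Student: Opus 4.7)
The plan is to factor $F_1$ through the intermediate coefficient map. Set $G:\mathcal{A}_{(a,b)}^+\to\R^3$, $G(\tau,\eta)=(A,B,C)$ with $A,B,C$ as defined below \eqref{eq:F-fun-1}, and $\Phi:\R^3\to\R^3$, $\Phi(A,B,C)=(\vartheta_1,\vartheta_2,\vartheta_3)$ as given by \eqref{eq:F-fun-1}. Then $F_1=\Phi\circ G$, so by the chain rule $DF_1=D\Phi\cdot DG$. Hence full rank (rank two) of $DF_1$ reduces to two independent tasks: (i) $D\Phi$ is invertible at $G(\tau,\eta)$, and (ii) $DG|_{(\tau,\eta)}$ has rank two. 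This split is useful because $\Phi$ involves $\sqrt{\Delta}$ and is messy to differentiate directly, whereas its inverse is polynomial.

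For (i), I would not expand $\det D\Phi$ directly. Instead, eliminate $\sqrt{\Delta}$ in the three relations of \eqref{eq:F-fun-1} to obtain the explicit smooth inverse $\Psi:=\Phi^{-1}$ given by $A=\vartheta_1^2\vartheta_3$, $B=2\vartheta_1\vartheta_2\vartheta_3$, $C=\vartheta_3(1-\vartheta_2^2)$. A single cofactor expansion of $D\Psi$ along its first column yields, after using the identity $1-\vartheta_2^2+2\vartheta_2^2=1+\vartheta_2^2$, the clean determinant
\begin{equation*}
\det D\Psi=4\vartheta_1^{2}\vartheta_3^{2}.
\end{equation*}
On $\Int\mathcal{A}_{(a,b)}^+|_{0<\eta}$ we have $\tau>0$, $\eta>0$ and $2\tau-b^2\eta^2>0$, so $A=a^2\eta^2+\tau^2>0$, $C=2\tau-b^2\eta^2>0$, and $\Delta=B^2+4AC>0$; therefore $\vartheta_1=2A/\sqrt{\Delta}>0$ and $\vartheta_3=\Delta/(4A)>0$. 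Consequently $\det D\Psi>0$, so $\Phi$ is a local diffeomorphism at $G(\tau,\eta)$ and $D\Phi$ is invertible.

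For (ii), the Jacobian of $G$ is the $3\times 2$ matrix
\begin{equation*}
DG=\begin{pmatrix} 2\tau & 2a^2\eta \\ 0 & 4ab\eta \\ 2 & -2b^2\eta \end{pmatrix},
\end{equation*}
and the $2\times 2$ minor formed by rows $1$ and $3$ evaluates to $-4\eta(a^2+b^2\tau)$. Since $\eta>0$ and $\tau>0$ on our domain, this minor vanishes only if $a=0$ and $b^2\tau=0$, i.e.\ only if $(a,b)=(0,0)$. Thus for any $(a,b)\neq(0,0)$ this single minor is nonzero uniformly on $\Int\mathcal{A}_{(a,b)}^+|_{0<\eta}$, so $DG$ has rank two and no case analysis between $ab=0$ and $ab\neq 0$ is required. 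Combining (i) and (ii) via the chain rule, $DF_1=D\Phi\cdot DG$ has rank two, which is the claimed full rank. The only mild obstacle I anticipate is the bookkeeping needed to verify that $G$ lands in the open region where $\Psi$ is smooth and $\det D\Psi$ is positive, but this follows immediately from the explicit inequalities $A>0$ and $\Delta>0$ derived above.
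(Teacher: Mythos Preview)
Your proof is correct and takes a genuinely different route from the paper's. The paper simply computes one $2\times 2$ minor of $DF_1$ directly, namely
\[
\frac{\partial\vartheta_1}{\partial\tau}\frac{\partial\vartheta_2}{\partial\eta}-\frac{\partial\vartheta_1}{\partial\eta}\frac{\partial\vartheta_2}{\partial\tau}=\frac{2ab\eta(a^2\eta^2+\tau^2)}{(2\tau^2-b^2\eta^2\tau+2a^2\eta^2)^2},
\]
and observes that this is nonzero for $\eta\neq 0$. Note, however, that this particular minor vanishes whenever $ab=0$, so the paper's argument literally covers only the regime $ab\neq 0$ (which is the only case in which the decomposition $f\circ\Theta^+_{(a,b)}=F_2\circ F_1$ is actually invoked in the proof of Lemma~\ref{lem:jac-F}; the cases $(a,0)$ and $(0,b)$ are handled there by a separate direct computation). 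Your factorisation $F_1=\Phi\circ G$ through the polynomial coefficient map $(A,B,C)$ is cleaner: it avoids differentiating through $\sqrt{\Delta}$ by passing to the explicit polynomial inverse $\Psi$, and your choice of minor $-4\eta(a^2+b^2\tau)$ from rows $1$ and $3$ of $DG$ is nonzero for every $(a,b)\neq(0,0)$, so you establish the proposition exactly as stated without any case split. Both approaches are short; yours proves strictly more and is less computationally fragile.
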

\begin{proof}
    To demonstrate that $DF_1|_{(\tau,\eta)}$ has full rank, it suffices to identify a two-by-two submatrix with a nonzero determinant. Proceeding with this approach, and noting that $2\tau -b^2\eta^2>0$ for $(\tau,\eta) \in \Int\mathcal{A}_{(a,b)}^+$, we obtain:
    \begin{equation*}
        \begin{split}
            \frac{\partial\vartheta_1}{\partial\tau}\frac{\partial\vartheta_2}{\partial\eta}-\frac{\partial\vartheta_1}{\partial\eta}\frac{\partial\vartheta_2}{\partial\tau}=&\ \frac{2ab\eta(a^2\eta^2 + \tau^2)}{(2\tau^2 -b^2\eta^2\tau + 2a^2\eta^2)^2} \neq 0,
        \end{split}
    \end{equation*}
    for all $(\tau,\eta) \in \mathcal{A}_{(a,b)}^+$ with $\eta\neq 0$.
\end{proof}
Before proving that $DF_2|_{F_1(\tau,\eta)}$ has full rank, let us outline the strategy and present some auxiliary results. First, we observe that $\widetilde{\Theta}_3$ is independent of $\vartheta_3$. This observation, together with the following result, will be instrumental in the proof.
\begin{proposition}\label{prp:det-F-2-1}
    Let $\widetilde{\Theta}_2$, and $\widetilde{\Theta}_3$ be the functions defined by Eq. \eqref{eq:F-fun-3}. Then,
    $$ \frac{\partial \widetilde{\Theta}_2}{\partial \vartheta_3}|_{F_1(\tau,\eta)} < 0 ,\;\;\text{and}\;\; \frac{\partial \widetilde{\Theta}_3}{\partial \vartheta_3} = 0. $$
\end{proposition}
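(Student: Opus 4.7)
The vanishing of $\partial_{\vartheta_3}\widetilde\Theta_3$ is immediate from inspection of Eq.~\eqref{eq:F-fun-3}: neither the prefactor $\sqrt{\vartheta_1\vartheta_2/(ab)}$, nor the upper limit $(1-\vartheta_2)/\vartheta_1$, nor the integrand $x(b+ax)^2/\sqrt{1-(\vartheta_1 x+\vartheta_2)^2}$ involves $\vartheta_3$.

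For the negativity of $\partial_{\vartheta_3}\widetilde\Theta_2$, my plan is to isolate the $\vartheta_3$-dependence algebraically. I abbreviate the coefficient of $x^2$ appearing in the numerator of $\widetilde\Theta_2$ as $\vartheta_3 K$, where
\[
K(\vartheta_1,\vartheta_2) := \tfrac{1}{2}\bigl(1+\tfrac{b}{a}\vartheta_1\vartheta_2-\vartheta_2^2\bigr).
\]
Splitting the numerator and pulling the scalar factors outside the integral gives the clean decomposition
\[
\widetilde\Theta_2(\vartheta_1,\vartheta_2,\vartheta_3) = \vartheta_3^{-1/2}\, I_1(\vartheta_1,\vartheta_2) - \vartheta_3^{1/2}\, K(\vartheta_1,\vartheta_2)\, I_2(\vartheta_1,\vartheta_2),
\]
where $I_k(\vartheta_1,\vartheta_2) = \int_0^{L} x^{2k-1}/\sqrt{1-(\vartheta_1 x+\vartheta_2)^2}\,dx$ with $L = (1-\vartheta_2)/\vartheta_1$, and neither $I_1$ nor $I_2$ depends on $\vartheta_3$. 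On $\Int\mathcal{A}^+_{(a,b)}$ one has $\vartheta_1>0$ and $\vartheta_2<1$, so $L>0$ and the integrand has only an integrable $(L-x)^{-1/2}$-type singularity at the endpoint; hence $I_1,I_2>0$.

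Differentiating yields
\[
\frac{\partial\widetilde\Theta_2}{\partial\vartheta_3} = -\tfrac{1}{2}\vartheta_3^{-3/2}\, I_1 - \tfrac{K}{2}\,\vartheta_3^{-1/2}\, I_2.
\]
The final step, and the only one that requires more than bookkeeping, is to verify that $K>0$ at every point of the form $F_1(\tau,\eta)$ with $(\tau,\eta)\in\Int\mathcal{A}^+_{(a,b)}$. For this I will invoke the inversion formula written just below Eq.~\eqref{eq:F-fun-3}, namely $\tau=\tfrac{\vartheta_3}{2}(1+\tfrac{b}{a}\vartheta_1\vartheta_2-\vartheta_2^2)=\vartheta_3 K$. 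Since $\tau>0$ and $\vartheta_3>0$ on the interior of $\mathcal{A}^+_{(a,b)}$, this forces $K = \tau/\vartheta_3 > 0$ at $F_1(\tau,\eta)$. Both summands in the displayed derivative are then strictly negative, which gives $\partial_{\vartheta_3}\widetilde\Theta_2\big|_{F_1(\tau,\eta)}<0$. I anticipate no real obstacle; the only delicate point is the integrability of the endpoint singularity, which follows from the fact that $1-(\vartheta_1 x+\vartheta_2)^2$ has a simple zero at $x=L$.
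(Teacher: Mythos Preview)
Your argument is correct and takes a genuinely different route from the paper's proof. The paper evaluates the integrals defining $\widetilde\Theta_2$ in closed form (via the expressions in Appendix~\ref{ap:per-map-III}), differentiates explicitly, and reduces the sign check to showing that two auxiliary functions $\varrho_1(\vartheta_2)$ and $\varrho_2(\vartheta_2)$ are positive on $(-1,1)$, a fact it then verifies by plotting their graphs. Your approach bypasses the closed-form computation entirely: by isolating the $\vartheta_3$-dependence as $\vartheta_3^{-1/2}I_1-\vartheta_3^{1/2}KI_2$ and reading off $K=\tau/\vartheta_3>0$ from the inversion formula below Eq.~\eqref{eq:F-fun-3}, you reduce the negativity to the trivial positivity of the raw integrals $I_1,I_2$. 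Your proof is shorter and fully rigorous without recourse to graphical evidence; the paper's explicit formulas, on the other hand, are reused in the proof of Proposition~\ref{prp:det-F-2}, so its heavier computation is not wasted effort.
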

\begin{proof}
    The expression for $\widetilde{\Theta}_2(\vartheta_1,\vartheta_2,\vartheta_3)$ from Appendix \ref{ap:per-map-III} implies:
    \begin{equation*}
        \begin{split}
            \frac{\partial \widetilde{\Theta}_2}{\partial \vartheta_3}|_{F_1(\tau,\eta)} &  = -\frac{2\alpha^2 \varrho_1(\vartheta_2) + \tau\varrho_2(\vartheta_2)}{4\alpha^4(\vartheta_3)^\frac{3}{2}},\\
        \end{split}
    \end{equation*}
Here $\varrho_1(\vartheta_2)$ and $\varrho_2(\vartheta_2)$ are auxilary functions given by:
\begin{equation}\label{eq:varrho-1-aux}
        \begin{split}
        \varrho_1(\vartheta_2) :=&\  \sqrt{1-\vartheta_2^2}-\vartheta_2 \arccos(\vartheta_2),\\
        \varrho_2(\vartheta_2) :=&\ (11\vartheta_2^2 + 4)\sqrt{1-\vartheta_2^2} - (6\vartheta_2^3 + 9\vartheta_2)\arccos{(\vartheta_2)}.
        \end{split}
    \end{equation}
    Therefore, it suffices to show that $\varrho_1(\vartheta_2)$ and $\varrho_2(\vartheta_2)$ have the same sign for all $\vartheta_2 \in (-1,1)$; note that $|\vartheta_2| = 1$ if and only if $C(\tau,\eta) = 0$, which occurs precisely when $(\tau,\eta) \in \partial\mathcal{A}_{(a,b)}^+$. This claim is verified by examining the graphs of both functions, as shown in Figure \ref{fig:F-2-aux-1}. Furthermore, as shown in Eq. \eqref{eq:F-fun-3}, $\widetilde{\Theta}_3$ is independent of $\vartheta_3$.
\end{proof}

    \begin{figure}[h!]
        \centering
        \begin{subfigure}{0.4\textwidth}
            \centering
            \includegraphics[width=\linewidth]{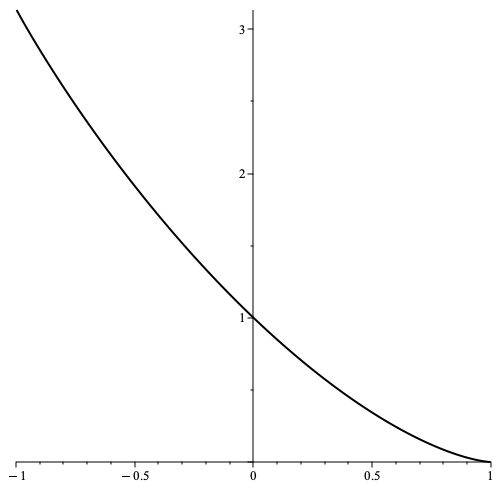}
           % \caption{The graph of $\varrho_1(\vartheta_2)$ on $\beta \in [-1,1]$.}
        \end{subfigure}
        \hfill%
        \begin{subfigure}{0.4\textwidth}
            \centering
            \includegraphics[width=\linewidth]{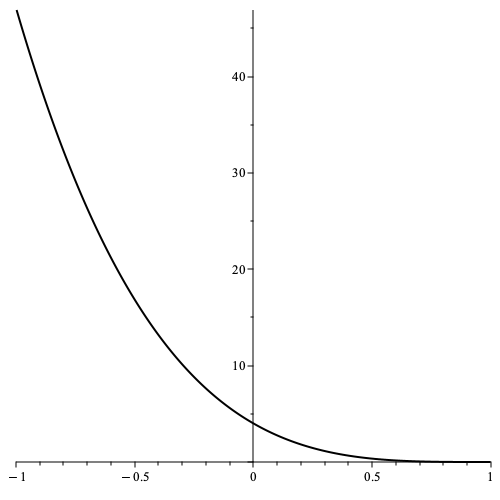}
           % \caption{The graph of $\mathrm{Num}_0^1(\beta)$ on $\beta \in [-1,1]$.}
        \end{subfigure}
        \caption{The panels display the graphs of $\varrho_1(\vartheta_2)$ on the left and $\varrho_2(\vartheta_2)$ on the right, with $\vartheta_2$  varying over the interval $[-1,1]$.}
        \label{fig:F-2-aux-1}
    \end{figure}

\begin{proposition}\label{prp:det-F-2}
    Let $F_2:\R^3 \to \R^2$ be the function defined by Eq. \eqref{eq:F-fun-3}. If $(a,b) \neq (0,0)$, then the differential  $DF_2|_{F_1(\tau,\eta)}$ has full rank for all $(\tau,\eta) \in \Int\mathcal{A}_{(a,b)}^+|_{0<\eta}$.
\end{proposition}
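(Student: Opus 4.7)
By Proposition \ref{prp:det-F-2-1}, $\widetilde{\Theta}_3$ is independent of $\vartheta_3$ and $\partial_{\vartheta_3}\widetilde{\Theta}_2 < 0$ on $\Int\mathcal{A}_{(a,b)}^+|_{0<\eta}$. Consequently, the Jacobian has the block form
\[
DF_2\bigl|_{F_1(\tau,\eta)}=\begin{pmatrix}
\partial_{\vartheta_1}\widetilde{\Theta}_2 & \partial_{\vartheta_2}\widetilde{\Theta}_2 & \partial_{\vartheta_3}\widetilde{\Theta}_2\\
\partial_{\vartheta_1}\widetilde{\Theta}_3 & \partial_{\vartheta_2}\widetilde{\Theta}_3 & 0
\end{pmatrix},
\]
and the $2\times 2$ minor obtained by keeping columns $i\in\{1,2\}$ and $3$ equals $-\partial_{\vartheta_3}\widetilde{\Theta}_2\cdot \partial_{\vartheta_i}\widetilde{\Theta}_3$. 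Thus $DF_2$ has full rank if and only if $(\partial_{\vartheta_1}\widetilde{\Theta}_3,\partial_{\vartheta_2}\widetilde{\Theta}_3)\neq (0,0)$, so I would reduce the problem to showing that $\widetilde{\Theta}_3$, viewed as a function of $(\vartheta_1,\vartheta_2)$, has a nonvanishing gradient throughout $F_1(\Int\mathcal{A}_{(a,b)}^+|_{0<\eta})$.

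\textbf{Step 1.} Move the $\vartheta_1$-dependence out of the upper integration limit by the substitution $u=\vartheta_1 x+\vartheta_2$, obtaining the fixed-interval expression
\[
\widetilde{\Theta}_3=\sqrt{\frac{\vartheta_2}{ab\,\vartheta_1^{3}}}\int_{\vartheta_2}^{1}\frac{(u-\vartheta_2)\bigl(b\vartheta_1+a(u-\vartheta_2)\bigr)^{2}}{\sqrt{1-u^{2}}}\,du.
\]
With this, partial differentiation commutes with the integral sign, avoiding singular boundary contributions.

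\textbf{Step 2.} Expand $(u-\vartheta_2)(b\vartheta_1+a(u-\vartheta_2))^{2}$ in powers of $(u-\vartheta_2)$ (degrees $1,2,3$) and evaluate the resulting moments $I_k(\vartheta_2):=\int_{\vartheta_2}^{1}(u-\vartheta_2)^k(1-u^{2})^{-1/2}\,du$ in closed form; each $I_k$ is a polynomial in $\vartheta_2$ plus explicit multiples of $\arccos\vartheta_2$ and $\sqrt{1-\vartheta_2^{2}}$, in the same spirit as the auxiliary functions $\varrho_1,\varrho_2$ of Eq. \eqref{eq:varrho-1-aux}. This produces a closed-form expression
\[
\widetilde{\Theta}_3 = \frac{1}{\vartheta_1^{3/2}}\sqrt{\frac{\vartheta_2}{ab}}\, R(\vartheta_1,\vartheta_2),
\]
with $R$ a quadratic polynomial in $\vartheta_1$ whose coefficients are elementary functions of $\vartheta_2$.

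\textbf{Step 3.} Differentiate. Because $\vartheta_1$ enters only through the prefactor $\vartheta_1^{-3/2}$ and polynomially through $R$, the partial $\partial_{\vartheta_1}\widetilde{\Theta}_3$ is a rational function of $\vartheta_1$ with coefficients that are elementary combinations of $\vartheta_2$, $\arccos\vartheta_2$ and $\sqrt{1-\vartheta_2^{2}}$. Show that $\partial_{\vartheta_1}\widetilde{\Theta}_3$ does not vanish on the admissible range, reducing to a sign check of an expression of the same flavor as $\varrho_1$, $\varrho_2$, which can be done graphically as in Proposition \ref{prp:det-F-2-1}. At the lone locus where this partial might vanish, namely $\vartheta_2=0$, fall back on $\partial_{\vartheta_2}\widetilde{\Theta}_3$, whose closed form is easier (the prefactor $\sqrt{\vartheta_2/ab}$ differentiates cleanly at $\vartheta_2=0$), and verify it is nonzero there. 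The degenerate cases $a=0$ or $b=0$, corresponding to Families $1$ and $2$, are handled directly from the original integral definition, where $\widetilde{\Theta}_3$ collapses to a constant multiple of a single moment of $x^{2}$ or $x^{4}$ and monotonicity in $\vartheta_1$ is manifest.

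\textbf{Main obstacle.} The computational bookkeeping is the real hurdle: the closed form of $\partial_{\vartheta_1}\widetilde{\Theta}_3$ consists of several terms combining $\vartheta_2$-polynomials with $\arccos\vartheta_2$ and $\sqrt{1-\vartheta_2^{2}}$, and one must rule out simultaneous vanishing with $\partial_{\vartheta_2}\widetilde{\Theta}_3$ on a codimension-zero subset. Splitting by the sign of $ab$ (which fixes the sign of $\vartheta_2$ through the relation $\eta=\sqrt{\vartheta_1\vartheta_2\vartheta_3/(ab)}$) and treating the boundary $ab=0$ separately will probably be unavoidable, and the final sign check is likely to require isolating a transcendental monotonicity argument analogous to the $\varrho_1,\varrho_2$ verification.
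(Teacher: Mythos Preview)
Your overall strategy coincides with the paper's: exploit the block structure of $DF_2$ coming from Proposition~\ref{prp:det-F-2-1} to reduce to showing $(\partial_{\vartheta_1}\widetilde{\Theta}_3,\partial_{\vartheta_2}\widetilde{\Theta}_3)\neq(0,0)$, compute $\widetilde{\Theta}_3$ in closed form via the substitution $u=\vartheta_1 x+\vartheta_2$, and finish with graphical sign checks of auxiliary functions of $\vartheta_2$ built from $\arccos\vartheta_2$ and $\sqrt{1-\vartheta_2^2}$.

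The one concrete misstep is your claim that $\partial_{\vartheta_1}\widetilde{\Theta}_3$ can vanish only at the ``lone locus'' $\vartheta_2=0$. First, since $\vartheta_2=2ab\eta^2/\sqrt{\Delta}$, the restriction $\eta>0$ with $ab\neq 0$ forces $\vartheta_2\neq 0$, so that locus is never in the domain. More importantly, the closed form of $\partial_{\vartheta_1}\widetilde{\Theta}_3$ has numerator proportional to
\[
7a^2\varrho_2(\vartheta_2)+30ab\,\vartheta_1\,\varrho_3(\vartheta_2)+18b^2\vartheta_1^2\,\varrho_1(\vartheta_2),
\]
a quadratic in $\vartheta_1$ whose coefficients all have the same sign only when $ab>0$; when $ab<0$ the middle term has the opposite sign and this quadratic can genuinely acquire real roots, so $\partial_{\vartheta_1}\widetilde{\Theta}_3$ may vanish along a curve, not at an isolated point. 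The paper handles this by splitting on the sign of $ab$ from the outset: for $ab>0$ one checks $\partial_{\vartheta_1}\widetilde{\Theta}_3\neq 0$ exactly as you propose (it reduces to $\varrho_1,\varrho_2,\varrho_3>0$ on $(0,1)$), while for $ab<0$ one switches entirely to $\partial_{\vartheta_2}\widetilde{\Theta}_3$, whose numerator is again a quadratic in $\vartheta_1$, and shows it is nonzero by proving its discriminant in $\vartheta_1$ is \emph{negative} for all admissible $\vartheta_2\in(-1,0)$ --- a single transcendental sign check of the same flavor as your $\varrho$-verifications. That discriminant trick is the missing ingredient in your Step~3.

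Finally, the degenerate cases $a=0$ or $b=0$ are not actually covered by this proposition: the very definition of $F_2$ in Eq.~\eqref{eq:F-fun-3} involves $b/a$ and $\sqrt{\vartheta_1\vartheta_2/(ab)}$, so the decomposition $f\circ\Theta^+_{(a,b)}=F_2\circ F_1$ is only used for $ab\neq 0$. The paper disposes of the boundary cases $(a,0)$ and $(0,b)$ by direct computation of $f\circ\Theta^+_{(a,b)}$ in the proof of Lemma~\ref{lem:jac-F}, not via $F_2$.
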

\begin{proof}
    We must shows $DF_2|_{F_1(\tau,\eta)}$ has rank two. To do so, we consider the cases $0< ab$ and $ab<0$.  

    \emph{Case $0<ab$:} We consider the following determinant and apply Proposition \ref{prp:det-F-2-1} to obtain:
    \begin{equation*}
        \begin{split}
            \frac{\partial \widetilde{\Theta}_2}{\partial \vartheta_1} \frac{\partial \widetilde{\Theta}_3}{\partial \vartheta_3} -\frac{\partial \widetilde{\Theta}_2}{\partial \vartheta_3}\frac{\partial \widetilde{\Theta}_3}{\partial \vartheta_1} & = -\frac{\partial \widetilde{\Theta}_2}{\partial \vartheta_3}\frac{\partial \widetilde{\Theta}_3}{\partial \vartheta_1}.
        \end{split}
    \end{equation*}
    Thus, it suffices to show that $\frac{\partial \widetilde{\Theta}_3}{\partial \vartheta_1} \neq 0$. The expression for $\widetilde{\Theta}_3(\vartheta_1,\vartheta_2,\vartheta_3)$ from Appendix \ref{ap:per-map-III} implies:
    \begin{equation*}
        \frac{\partial \widetilde{\Theta}_3}{\partial \vartheta_1}|_{F_1(\tau,\eta)} =  - \frac{\vartheta_2(7a^2\varrho_2(\vartheta_2) + 30ab\vartheta_1\varrho_3(\vartheta_2)+18b^2\vartheta^2_1\varrho_1(\vartheta_2))}{12\vartheta_1^4\sqrt{ab\vartheta_1\vartheta_2}}.\\
    \end{equation*}
    Here $\varrho_1(\vartheta_2)$ and $\varrho_2(\vartheta_2)$ are defined in Eq. \eqref{eq:varrho-1-aux}. Additionally, $\varrho_3(\vartheta_2)$ is given by:
\begin{equation*}
    \begin{split}        
        \varrho_3(\vartheta_2):=& (2\vartheta_2^2 + 1)\arccos(\vartheta_2)-3\vartheta_2\sqrt{1-\vartheta_2^2}.
    \end{split}
\end{equation*}
The condition $0<ab$ implies that $0 < \beta < 1$, and by construction $0<\vartheta_1$. Therefore, it suffices to verify that $\varrho_3(\vartheta_2)>0$. This is confirmed by graphing the function, as shown in Figure \ref{fig:F-2-aux-2}. 

\emph{Case $ab<0$:} We consider the following determinant and apply Proposition \ref{prp:det-F-2-1} to obtain:
    \begin{equation*}
        \begin{split}
            \frac{\partial \widetilde{\Theta}_2}{\partial \vartheta_2} \frac{\partial \widetilde{\Theta}_3}{\partial \vartheta_3} -\frac{\partial \widetilde{\Theta}_2}{\partial \vartheta_3}\frac{\partial \widetilde{\Theta}_3}{\partial \vartheta_2} & = -\frac{\partial \widetilde{\Theta}_2}{\partial \vartheta_3}\frac{\partial \widetilde{\Theta}_3}{\partial \vartheta_2}.
        \end{split}
    \end{equation*}
    Thus, it suffices to show that $\frac{\partial \widetilde{\Theta}_3}{\partial \vartheta_2} \neq 0$. The expression for $\widetilde{\Theta}_3(\vartheta_1,\vartheta_2,\vartheta_3)$ from Appendix \ref{ap:per-map-III} implies{:}
    \begin{equation}\label{eq:jac-theta-3}
        \frac{\partial \widetilde{\Theta}_3}{\partial \vartheta_2}|_{F_1(\tau,\eta)} =  - \frac{a^2\varrho_4(\vartheta_2) -6ab\varrho_5(\vartheta_2)\vartheta_1 + 6b^2\varrho_6(\vartheta_2)\vartheta_1^2}{12\vartheta_1^3\sqrt{ab\vartheta_1\vartheta_2}}.\\
    \end{equation}
    Here $\varrho_3(\vartheta_2)$, $\varrho_5(\vartheta_2)$, and $\varrho_6(\vartheta_2)$ are functions given by
\begin{equation*}
    \begin{split}        
        \varrho_4(\vartheta_2):=&\ (42\vartheta_2^3+27\vartheta_2)\arccos(\vartheta_2) -(65\vartheta_2^2+4)\sqrt{1-\vartheta_2^2},\\
        \varrho_5(\vartheta_2):= &\ (10\vartheta_2^2+1)\arccos(\vartheta_2) -11\vartheta_2 \sqrt{1-\vartheta_2^2},\\
        \varrho_6(\vartheta_2):=&\ 3\vartheta_2\arccos(\vartheta_2) - \sqrt{1-\vartheta_2^2}.
    \end{split}
\end{equation*}
    The numerator in Eq. \eqref{eq:jac-theta-3} is a quadratic function of $\vartheta_1$. To demonstrate that this numerator is nonzero, it suffices to show that the discriminant of the quadratic is negative. The discriminant is given by
    \begin{equation}\label{eq:varrho-2-aux}
        \Delta(\vartheta_2) = 6a^2b^2 \big(6\varrho^2_5(\vartheta_2) -4 \varrho_4(\vartheta_2)\varrho_6(\vartheta_2) \big).
    \end{equation}
    Therefore, to conclude, it is enough to verify that the expression in parentheses from Eq. \eqref{eq:varrho-1-aux} is negative. This fact is confirmed by graphing the function, as shown in Figure \ref{fig:F-2-aux-2}.
\end{proof}

    \begin{figure}[h!]
        \centering
        \begin{subfigure}{0.4\textwidth}
            \centering
            \includegraphics[width=\linewidth]{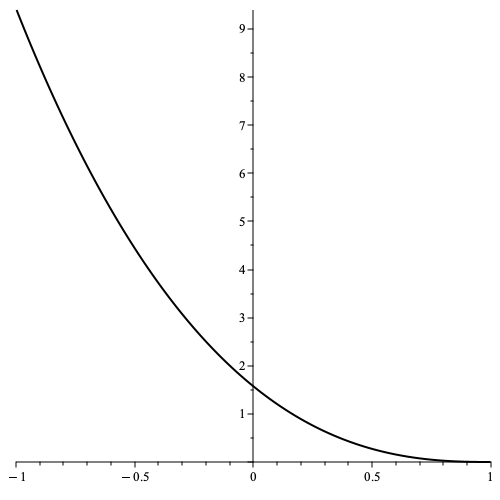}
           % \caption{The graph of $\varrho_1(\vartheta_2)$ on $\beta \in [-1,1]$.}
        \end{subfigure}
        \hfill%
        \begin{subfigure}{0.4\textwidth}
            \centering
            \includegraphics[width=\linewidth]{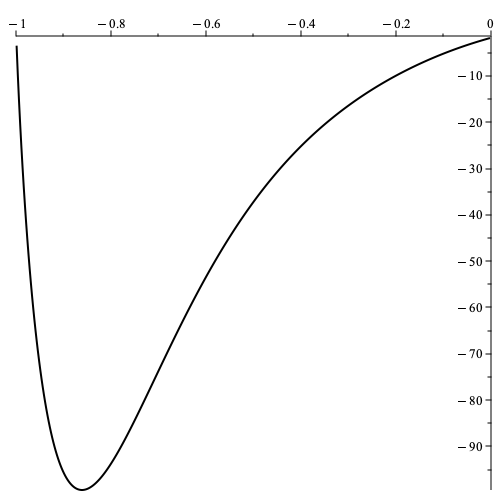}
           % \caption{The graph of $\mathrm{Num}_0^1(\beta)$ on $\beta \in [-1,1]$.}
        \end{subfigure}
        \caption{The panels display the graphs of $\varrho_3(\vartheta_2)$  on the left with $\vartheta_2$ ranging over $(-1,1)$, and the graph $\frac{\Delta(\vartheta_2)}{6a^2b^2}$ on the right with $\vartheta_2$ ranging over $(-1,0)$.}
        \label{fig:F-2-aux-2}
    \end{figure}

\begin{proof}[Proof of Lemma \ref{lem:jac-F}]
    
Consider three cases: $(a,0)$ with $a\neq 0$, $(0,b)$ with $b\neq 0$, and $(a,b) \neq (0,0)$.

\emph{Case $(a,0)$ with $a \neq 0$:} The expression for $\Theta_{(a,b)}^+(\tau,\eta)$ is:
    \[
        (f\circ\Theta_{(a,b)}^+)(\tau,\eta) = \left(\frac{\sqrt{2\tau}(3a^2\eta^2-\tau^2)}{3(a^2\eta^2+\tau^2)^2},\frac{2a^2\nu(2\tau)^{\frac{3}{2}}}{3(a^2\eta^2 + \tau^2)^2}\right).
    \]
    A direct calculation shows that $ \det D(f\circ \Theta_{(a,b)}^+) = \frac{4a^2\tau(3a^2\eta^2+\eta^2)}{3(a^2\eta^2 + \tau^2)^4}$. 
    It follows that $\det D(f\circ \Theta_{(a,b)}^+) \neq 0$, since $a \neq 0$, and $(\tau ,\nu) \neq (0,0)$.

 \emph{Case  $(0,b)$ with $b \neq 0$}: The expression for $\Theta_{(a,b)}^+(\tau,\eta)$ is given by:
    \[
        (f\circ\Theta_{(a,b)}^+)(\tau,\eta) = \left(\frac{\sqrt{2\tau-b^2\eta^2}(2b^2\eta^2 -\tau)}{3\tau^3},\frac{b^2\eta\sqrt{2\tau-b^2\eta^2}}{\tau^2}\right).
    \]
    A direct computatation shows that  $\det D(f\circ \Theta_{(a,b)}^+) =\ \frac{b^2}{\tau^4}$. It follows that $\det D(f\circ \Theta_{(a,b)}^+) \neq 0$ since $b \neq 0$ and $(\tau,\eta) \neq (0,0)$.

    \emph{Case  $(a,b)\neq (0,0)$}: Since $(a,b) \neq (0,0)$ the functions $F_1$ and $F_2$ are well defined an we write $f \circ\Theta^{+}_{(a,b)} = F_2 \circ F_1$. Propositions \ref{prp:det-F-1} and \ref{prp:det-F-2} implies that $D(F_2 \circ F_1)|_{(\tau,\eta)}$ is non-degenerate. Therefore, $D(f \circ\Theta^{+}_{(a,b)})|_{(\tau,\eta)}$ is non-degenerate.
\end{proof}

\section*{Conclusion and future work}
We investigated the sub-Riemannian geodesics within the $2$-jet space of plane curves. Using the symmetries: rotations, Carnot translations, and dilatations, we characterized the family of sub-Riemannian geodesics satisfying the asymptotic condition outlined in Conjecture \ref{conj}. These geodesics were determined by a momentum parameter $\mu \in \Lambda$, where $\Lambda$ is parametrized by the parameters $(a,b) \in \R^2$ together with the dilatation.

Theorem \ref{the:main} established that two families of geodesics with momentum $\mu \in \Lambda$ are metric lines, namely the cases $(a,0)$, and $(0,b)$. To prove Theorem \ref{the:main}, we constructed a sub-Riemannian manifold $\R^5_{(a,b)}$ and defined a period map that encodes the asymptotic behavior of these geodesics.

Theorem \ref{thm:main-2} extended the analysis to a general geodesic with momentum $\mu \in \Lambda$, providing conditions which make this sub-Riemannian geodesic a metric line. One key condition required is proving that the period map is one-to-one. To tackle this, we invoked Hadamard’s Global Diffeomorphism Theorem, which in turn depends on the image of the period map being simply connected. At present, we have established this property only for the cases $(a,0)$ and $(0,b)$.

Further work is needed to complete the proof of Conjecture 1. The objectives for future research are:

\textbf{\textit{(a)}} Prove that the set $N_{(a,b)}$, as defined in Section \ref{subsub:per-map-II}, is simply connected for all $(a,b)  \in \R^2 \setminus  \{(0,0\}$.

In the cases $(a,0)$ and $(0,b)$, $\mathcal{A}_{(a,b)}^+$ is an open set, but in the general case, attention must be paid to the boundary $\partial\mathcal{A}_{(a,b)}^+$.

\textbf{\textit{(b)}} Prove that $\Theta^+_{(a,b)}:\partial\mathcal{A}_{(a,b)}^+|_{0<\eta} \to \R^2$ is one-to-one, and further, leverage the fact that $\Theta^+_{(a,b)}:\mathcal{A}_{(a,b)}^+ \to \R^2$ is an open map to show that $\Theta^+_{(a,b)}:\mathcal{A}_{(a,b)}^+|_{0<\eta} \to \R^2$ is one-to-one.

\textbf{\textit{(c)}} Establish Condition \textbf{\textit{(4)}} from Theorem \ref{thm:main-2} for the general case $(a,b)$.

Figure \ref{fig:perid-map-1} presents numerical evidence supporting the validity of tasks \textbf{\textit{(a)}}, \textbf{\textit{(b)}}, and \textbf{\textit{(c)}}.

    \begin{figure}[h!]
    \centering
        \begin{subfigure}{0.405\textwidth}
            \centering
            \includegraphics[width=\linewidth]{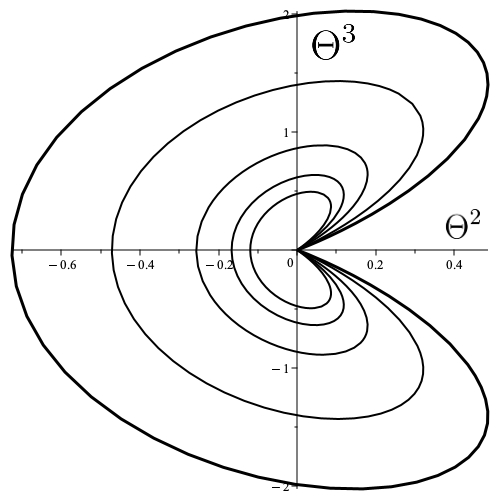}
            \caption{The panel displays the image of $F_{(1,1)}$. The curves are the values of $F_{(1,1)}$ when we fix $\tau$, and vary $\eta \in (-\sqrt{2\tau},\sqrt{2\tau})$.}
        \end{subfigure}\qquad
        \begin{subfigure}{0.405\textwidth}
            \centering
            \includegraphics[width=\linewidth]{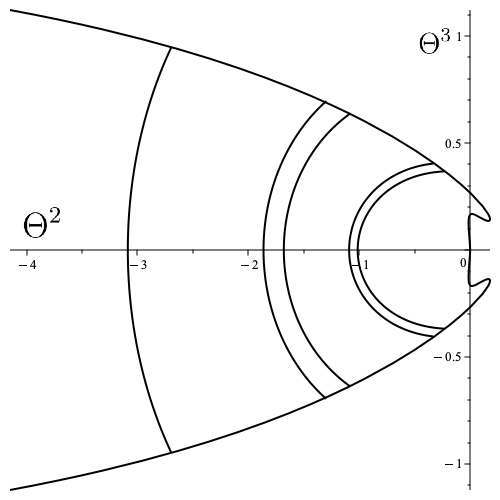}
            \caption{The panel displays the image of $F_{(1,-1)}$. The curves are the values of $F_{(1,-1)}$ when we fix $\tau$, and vary $\eta \in [-\sqrt{2\tau},\sqrt{2\tau}]$ along with the boundary of $N_{(1,-1)}$. }
        \end{subfigure}
        \caption{The panels present numerical evidence supporting that $N_{(a,b)}$ is simply connected. Our results further indicate that $N_{(a,b)} = \R^2\setminus[0,\infty]\times \{0\}$ for $0<ab$.}
        \label{fig:perid-map-1}
    \end{figure}

\begin{figure}
    \centering
    \includegraphics[width=0.6\linewidth]{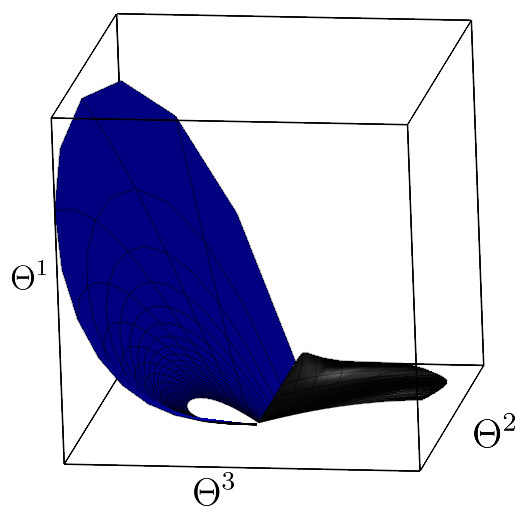}
    \caption{The panel display the image of $\Theta^+_{(1,1)}(\tau,\eta)$ on blue and $\Theta^-_{(1,1)}(\tau,\eta) = \Theta^+_{(1,-1)}(\tau,\eta)$ on black showing that they do only intercept of the values for $\eta=0$.}
    \label{fig:perid-map-2}
\end{figure}

\section*{Acknolegement}
The authors wish to express their gratitude to the Department of Mathematics at the University of Michigan for organizing the 2025 REU project, during which this research was conducted. In particular, we thank Prof. Asaf Cohen for his time and dedication in making the project possible. We would also like to thank Annie Winkler for her support and help before and during the project. Additionally, Daniella Catalá gratefully acknowledges the support of Prof. Anthony Bloch through his NSF grant (\href{https://www.nsf.gov/awardsearch/showAward?AWD_ID=2103026&HistoricalAwards=false}{DMS-2103026}), and Miriam Vollmayr-Lee gratefully acknowledges the support of Prof. Ralf Spatzier through his NSF grant (\href{https://www.nsf.gov/awardsearch/showAward?AWD_ID=2404309&HistoricalAwards=false}{DMS-2404309}).

\appendix

\section{The Jet Space as Carnot Group}\label{ap:lie-alg}

The distribution $\D$ generates a Lie algebra of vector fields via the bracket iteration:
$$ Y_\ell^i := [X,Y_\ell] =  \frac{\partial}{\partial u_\ell^{k-i}}, \;\; \text{for all}\;\;\ell=1,\dots,n\;\;\text{and}\;\;i=1,\cdots,k. $$
Note that the bracket relations above do not depend on the point  $j^k_{x_0}(f)$, which implies the vector fields are the left translation of the Lie algebra $\mathfrak{j}^k(\R,\R^n)$. Indeed, the restriction of these vector fields to the origin defines a stratified Lie algebra whose layers are:
\begin{equation}
    \begin{split}
        V_1 & = \{X,Y_1,\cdots,Y_n \} \\
        V_i & = \{Y_1^{i-1},\cdots,Y_n^{i-1} \}, \;\text{where}\; i=2,\cdots,k+1.
    \end{split}
\end{equation}
Therefore, the Lie algebra is stratified as follows:
$$ \mathfrak{j}^k(\R,\R^n) = V_1 \oplus \cdots \oplus V_k. $$
For the case $\PJ$, It Lie algebra is generated by:
$$\mathfrak{j}^2(\R,\R^2) = \mathrm{span}\{X,Y_1,Y_2,Y_1^1Y_2^1,Y_1^2,Y_2^2\}.$$ 
The normal subgroup $\mathcal{N}$ from the discussion in Section \ref{subsec:two-jet} is defined as $\mathcal{N} = \exp(\mathfrak{n})$, where $\mathfrak{n}:= \mathrm{span}\{ Y_2,Y_2^1,Y_2^2\}$ is an ideal by the above discusion. It follows that
$\mathfrak{j}^2(\R,\R^1) \simeq \mathfrak{j}^2(\R,\R^2)/\mathfrak{n}$ as we metioned early.

The Lie bracket rekations show that $V_{i+1} = [V_1,V_i]$, for $\ell=1,\cdots,k$, and $[\mathfrak{j}^k(\R,\R^n),\mathfrak{j}^k(\R,\R^n)]$ is an abelian ideal of $\mathfrak{j}^k(\R,\R^n)$.  Consequently, $\kJ$ is a \ma Carnot group, a concept we will formally define below. 

A group $\G$ is \ma if $[\G,\G]$ is abelian \cite[Chapter 5]{robinson2012course}. Equivalently, a group $\G$ is \ma if and only if it contains a normal abelian subgroup $\Ag \triangleleft \G$ such that $\G/\Ag$ is abelian. Therefore, we can think of every \ma Carnot group as an extension of $\G/\Ag$ by $\Ag$ where $\Ag$ is the maximal abelian normal subgroup containing $[\G,\G]$; this is a pivotal aspect in the symplectic reduction of $T^*\G$ by $\Ag$. For further details on the construction of the reduced Hamiltonian $H_{\mu}$ and the symplectic reduction, consult \cite{BravoDoddoli2024}.

In the case of $\kJ$, the maximal abelian subgroup $\Ag = \exp(\Laa)$ where $\Laa = \mathrm{span}\{Y_1,\cdots,Y_n,Y^1_1,\cdots,Y_n^k\}$ is maximal abelian ideal. Hence,  $\kJ/\Ag \simeq \R$.

\subsection{Symmetries}\label{subsub:sym}
The symmetries admitted by $\kJ$ are the following:

(\textbf{Carnot Translations}) By construction, the \sR manifold is left-invariant under group translations. 

(\textbf{Dilatations}) Carnot groups possess a natural family of dilatations, forming a one-parameter group of automorphisms of $\G$. For every $\lambda \in \R\setminus\{0\}$, we denote the dilatation by $\lambda$ by $\delta_\lambda$. If $\G$ is endowed with a left-invariant \sR metric, then dilatations are compatible with the \sR metric. i.e.,  
$$ \mathrm{dist}_{\G}(\delta_{\lambda}g_1,\delta_{\lambda}g_1) = |\lambda| \mathrm{dist}_\G(g_1,g_2).$$
In addition, if $\gamma(t)$ is \sR geodesic parameterized by arc length, so is $\gamma_{\lambda}(t) = \delta_{\frac{1}{\lambda}}\gamma(\lambda t)$. Lemma 2.5 from \cite{BravoDoddoli+2024} shows that it is enough to classify metric lines up to dilatations and translations.

(\textbf{Rotations}) Let us consider  the special orthonormal group $SO(n)$, and let $Q\in SO(n)$. We define an automorphism $\psi_Q:\mathfrak{j}^k(\R,\R^n) \to \mathfrak{j}^k(\R,\R^n)$, by defining its value in the basis $\{X,Y_1,\cdots,Y_n,Y_1^1,\cdots,Y_n^k\}$:
    \begin{equation*}
        \begin{split}
        \psi_Q(X) & = X,\;\; \psi_Q(Y_\ell) = \sum_{k=1}^n Q_{jk}Y_k\;\;\text{for all}\;\; \ell=1,\cdots,n, \\
        \psi_Q(Y_\ell^i) & = \sum_{k=1}^n Q_{jk}Y_k^i \;\;\text{for all}\;\; \ell=1,\cdots,n,\;\;\text{and}\;\; i=2,\cdots,k.  
        \end{split}
    \end{equation*}
    The action $\psi_Q$ induces an isometric action $\Psi_Q:\kJ \to \kJ$. Indeed,
     by construction, $\psi_Q(Y_\ell^i) = [\psi_Q(X),\psi_Q(Y_\ell^{i-1})]$, so $\psi_Q$ defines a linear Lie automorphism. Since $\kJ$ is connected and simply connected, the exponential map defines an action $\Psi_Q$ on $\kJ$  \cite[Theorem 5.6]{hall2015lie}. This action preserves the distribution $\D$. In addition, the norm of a vector $v \in \D$ is the same as the vector $(\Psi_Q)_*v$.

\section{Abnormal geodesics}\label{Ap:abn}
We will prove the results corresponding to abnormal geodesics.
\subsection{Proof of Proposition \ref{prp:abn-geo}}\label{ap:abn1}

\begin{proof}[Proof of Proposition \ref{prp:abn-geo}]
    We compute the abnormal curves using Pontryagin’s maximum principle. Consider the following optimal control problem:
    \begin{equation}\label{eq:abn-geo}
        \dot{\gamma}(t) = v_x(t) X + v_1(t)Y_1+ \cdots + v_n(t)Y_n 
    \end{equation}
    subject to the boundary conditions $\gamma(0)$ and $\gamma(T)$. Note that Eq. \eqref{eq:abn-geo} implies $\dot{x} = v_x(t)$. By the Cauchy-Schwarz inequality, minimizing the sub-Riemannian length is equivalent to minimizing the following action:
\begin{equation}\label{eq:abn-geo-2}
\frac{1}{2}\int_0^T v_x^2+ v_1^2+ \cdots +v_{n}^2dt \to min,
\end{equation} 
subject to the constraint  $v_x^2+v_1^2 + \cdots + v_{n}^2 = 1$. Accordingly, the Hamiltonian associated with the optimal control problem \eqref{eq:abn-geo} and \eqref{eq:abn-geo-2} for the abnormal case is:
    \begin{equation}\label{eq:ab-funct}
        H(P,j_{x}^k(f),v) = v_x(t) P_X + v_1(t) P_{Y_1}+\cdots+v_n(t)P_{Y_n},
    \end{equation}
    where $P_X, P_{Y_1},\cdots,P_{Y_n}$ denote the left-invariant momentum functions corresponding to the frame of left-invariant vector fields defining the distribution $\D$. Applying Pontryagin’s maximum principle yields a Hamiltonian system for the left-invariant momentum functions:
    \begin{equation}
    \begin{split}
        \dot{P}_x & = -v_1(t)P_{Y^1_1}-\cdots-v_n(t)P_{Y^1_n},\\
        \dot{P}_{Y_\ell} & = v_x(t)P_{Y_\ell^1} \;\text{for all}\;\;\ell=1,\cdots,n \\
        \dot{P}_{Y_\ell^i} & = -v_x(t)P_{Y^{i+1}_\ell}\;\text{for all}\;\;\ell=1,\cdots,n\;\text{and}\;\;i = 2,\cdots,k-1 \\
        \dot{P}_{Y_\ell^k} & = 0\;\text{for all}\;\;\ell=1,\cdots,n.
    \end{split}
    \end{equation}
In addition, the maximal condition is given by:
\begin{equation}\label{eq:pont-prin}
\max_{u \in \R^{n+1}} H(P(t),j_{x}^k(f)(t),v) = H(P(t),\widehat{\gamma}(t),\widehat{u}(t)), 
\end{equation}
where  $\widehat{\gamma}(t)$ is the optimal process, subject to the non-triviality condition $P(t) \neq 0$. Consequently, the maximum condition yields: 
\begin{equation}\label{eq:abn-proof-gama-5}
0 = P_{X},\;\;\text{and}\;\; 0 = P_{Y_\ell}  \;\; \text{for} \;\;i = 1,\dots,n. 
\end{equation}
Thus, the maximum condition implies:
    \begin{equation*}
    \begin{split}
        0 & = -v_1(t)P_{Y^1_1}-\cdots-v_n(t)P_{Y^1_n},
        \;\text{and}\;0  = v_x(t)P_{Y_\ell^1} \;\text{for all}\;\;\ell=1,\cdots,n.
    \end{split}
    \end{equation*}
    If $v_x(t) \neq 0$, then $P_{Y_\ell^1} = 0$ for all $\ell=1,\cdots,n$. It follows that  $\dot{P}_{Y_\ell^1} = 0$ for all $\ell=1,\cdots,n$, which in turn implies $\dot{P}_{Y_\ell^2} = 0$ for all $j$, and so forth. Thus, the condition $v_x(t)\neq 0$ results in all left-invariant momentum functions vanishing, yielding the trivial case  $P(t) = 0$. 
    
    On the other hand, if  $v_x(t) = 0$, then $\dot{\gamma}(t) \in \mathrm{span} \{ Y_1,\cdots,Y_n\}$, meaning the $x$-component of the curve  remains constant. Since the vector fields $Y_1,\cdots,Y_n$ commute, the singular curves are tangent to an integrable distribution. Consequently, these curves lie on integral submanifolds, namely a hyperplane whose tangent spaces are spanned by $\{ Y_1,\cdots,Y_n\}$. As a result, these abnormal geodesics are straight lines.
\end{proof}

\subsection{Proof of Proposition \ref{prp:abn-mag-spa}}
\begin{proof}[Proof of Proposition \ref{prp:abn-mag-spa}]
       Adopting the approach outlined in Section \ref{ap:abn1}, consider the following optimal problem:
    \begin{equation}\label{eq:abn-proof-c}
    \dot{c}(t) = v_x(t) \widetilde{X}+ v_1(t) \widetilde{Y}_1  + v_2(t) \widetilde{Y}_2, 
    \end{equation}
      with its corresponding action. The Hamiltonian associated with the abnormal case for this control problem is: 
    \begin{equation}
        H(p_x,p_{\textbf{y}},p_{\textbf{z}},x,\textbf{y},\textbf{z},u) = v_x(t) p_x + \sum_{\ell=1}^2 v_\ell(t)(p_{y^i}+p_{z^i}P_i(x)),
    \end{equation}
Applying Pontryagin’s maximum principle yields a Hamiltonian system for the variables $p_{x}$, $p_{\textbf{y}}$, and $p_{\textbf{z}}$:
\begin{equation}\label{eq:abn-c}
\dot{p}_{x_i} = \sum_{\ell=1}^2 v_\ell(t)p_{z^i}\frac{\partial P_\ell}{\partial x},\;\;\dot{p}_{y^j} = 0 \;\;\text{and}\;\;\dot{p}_{z^j} = 0\;\;\text{for} \;\;i = 1,2.
\end{equation}
Thus, the maximum condition yields:
\begin{equation}\label{eq:abn-proof-gama-6}
0 = p_{x},\;\;\text{and}\;\; 0 = p_{y^i} + p_{z^i}P_i(x) \;\; \text{for} \;\;i = 1,2. 
\end{equation}
Observe that the non-triviality condition requires $(p_{\textbf{y}},p_{\textbf{z}}) \neq (\mathbf{0},\mathbf{0})$. If $p_{\textbf{z}} = \mathbf{0}$, then $p_{\textbf{y}} = \mathbf{0}$, so we must have $p_{\textbf{z}} \neq \mathbf{0}$. Furthermore, the Hamiltonian equations imply $v_x(t) = p_x$, and therefore $v_x(t) = 0$. To obtain a non-trivial curve, it is necessary that $(v_1(t),v_2(t)) \neq (0,0)$. By combining Eqs. \eqref{eq:abn-c} and \eqref{eq:abn-proof-gama-6}, we obtain:
$$ 0 = (v_1(t),v_2(t)) \cdot (p_{z^1}\frac{\partial P_1}{\partial x},p_{z^2}\frac{\partial P_2}{\partial x}). $$

\end{proof}

\section{Period Map III}\label{ap:per-map-III}
We present the explicit expression of the auxiliary function $F_2:\R^3\to\R^2$, defined by Eq. \ref{eq:F-fun-3}, used in the proof of Lemma \ref{lem:jac-F}. After the integration, $F_2(\vartheta_1,\vartheta_2,\vartheta_3)$ is given by:
      \begin{equation*}
     \begin{split}
           %\tilde{\Theta}_{1}(\vartheta_1,\vartheta_2,\vartheta_3)  & =  \frac{\sqrt{\vartheta_2}}{\sqrt{ab} \vartheta_1^{\frac{3}{2}}} \big(a\sqrt{1-\vartheta_2^2} + ( b\vartheta_1 - a\vartheta_2 )\arccos\vartheta_2 \big) ,\\
          \tilde{\Theta}_{2}(\vartheta_1,\vartheta_2,\vartheta_3)  & = \frac{1}{ a \sqrt{\vartheta_3} \vartheta_1^4} \big( \sigma_1(\vartheta_1,\vartheta_2,\vartheta_3) \sqrt{1-\vartheta_2^2} - \sigma_2(\vartheta_1,\vartheta_2,\vartheta_3) \arccos\vartheta_2 \big) ,\\
         \tilde{\Theta}_{3}(\vartheta_1,\vartheta_2,\vartheta_3)   &:= \frac{-\sqrt{\vartheta_2}}{\sqrt{ab} \vartheta_1^{\frac{7}{2}}} \big(\sigma_3(\vartheta_1,\vartheta_2,\vartheta_3) \sqrt{1-\vartheta_2^2} + \sigma_4(\vartheta_1,\vartheta_2,\vartheta_3)  \arccos\vartheta_2 \big) . \\
     \end{split}
     \end{equation*}
Here $\sigma_1(\vartheta_1,\vartheta_2,\vartheta_3)$, $\sigma_2(\vartheta_1,\vartheta_2,\vartheta_3)$, $\sigma_3(\vartheta_1,\vartheta_2,\vartheta_3)$, and $\sigma_4(\vartheta_1,\vartheta_2,\vartheta_3)$ are auxiliary functions defined as:
    \begin{equation*}
     \begin{split}
          \sigma_1(\vartheta_1,\vartheta_2,\vartheta_3)  &:= \frac{11}{12} (\vartheta_2^2 + \frac{3}{11})(a\vartheta_2^2-b\vartheta_1\vartheta_2-a)\vartheta_3^2 + a \vartheta_1^2 ,\\
         \sigma_2(\vartheta_1,\vartheta_2,\vartheta_3)  & := \frac{\vartheta_2}{2}\big( (\vartheta_2^2 + \frac{3}{2})(a\vartheta_2^2 -b\vartheta_1\vartheta_2-a)\vartheta_3^2 + 2a\vartheta_1^2 \big),\\
          \sigma_3(\vartheta_1,\vartheta_2,\vartheta_3)  &:= 3ab\vartheta_1\vartheta_2-b^2\vartheta_1^2-a^2(\frac{11 \vartheta_2^2}{6}+\frac{2}{3}) , \\
          \sigma_4(\vartheta_1,\vartheta_2,\vartheta_3)  &:= b^2\vartheta_1^2\vartheta_2+(\vartheta_2^3+\frac{3}{2}\vartheta_2)-2ab\vartheta_1(\vartheta_2^2+\frac{1}{2}). \\
     \end{split}
     \end{equation*}

\nocite{*} % to test all bib entrys
\bibliographystyle{plain}
\bibliography{bibli}

\begin{thebibliography}{10}

\bibitem{agrachev2019comprehensive}
A.~Agrachev, D.~Barilari, and U.~Boscain.
\newblock {\em A comprehensive introduction to sub-Riemannian geometry}, volume 181.
\newblock Cambridge University Press, 2019.

\bibitem{BravoDoddoli+2024}
A.~Bravo-Doddoli.
\newblock Metric lines in the jet space.
\newblock {\em Analysis and Geometry in Metric Spaces}, 12(1):20240016, 2024.

\bibitem{bravododdoligeltype}
A.~Bravo-Doddoli.
\newblock Metric lines in engel-type groups, 2025.

\bibitem{BravoDoddoli2024}
A.~Bravo-Doddoli, E.~Le~Donne, and N.~Paddeu.
\newblock Symplectic reduction of the sub-riemannian geodesic flow for metabelian nilpotent groups.
\newblock {\em Geometric Mechanics}, 01(01), March 2024.

\bibitem{bravo2022geodesics}
A.~Bravo-Doddoli and R.~Montgomery.
\newblock Geodesics in jet space.
\newblock {\em Regular and Chaotic Dynamics}, 27(2):151--182, 2022.

\bibitem{gromov1996carnot}
M.~Gromov.
\newblock Carnot-carath{\'e}odory spaces seen from within.
\newblock In {\em Sub-Riemannian geometry}, pages 79--323. Springer, 1996.

\bibitem{hakavuori2023blowups}
E.~Hakavuori and E.~Le~Donne.
\newblock Blowups and blowdowns of geodesics in carnot groups.
\newblock {\em Journal of Differential Geometry}, 123(2):267--310, 2023.

\bibitem{hall2015lie}
B.~Hall.
\newblock {\em Lie Groups, Lie Algebras, and Representations: An Elementary Introduction}.
\newblock Graduate Texts in Mathematics. Springer International Publishing, 2015.

\bibitem{hilgert2011structure}
J.~Hilgert and K.H. Neeb.
\newblock {\em Structure and geometry of Lie groups}.
\newblock Springer Science \& Business Media, 2011.

\bibitem{krantz2002implicit}
S.~G. Krantz and H.~R. Parks.
\newblock {\em The implicit function theorem: history, theory, and applications}.
\newblock Springer Science \& Business Media, 2002.

\bibitem{lawden2013elliptic}
D.~F. Lawden.
\newblock {\em Elliptic functions and applications}, volume~80.
\newblock Springer Science \& Business Media, 2013.

\bibitem{le2008cornucopia}
E.~Le~Donne and F.~Tripaldi.
\newblock A cornucopia of carnot groups in low dimensions.
\newblock {\em Analysis and Geometry in Metric Spaces}, 10(1):155--289, 2022.

\bibitem{le2025metric}
Enrico Le~Donne.
\newblock {\em Metric {Lie} groups. {Carnot}-{Carath{\'e}odory} spaces from the homogeneous viewpoint}, volume 306 of {\em Grad. Texts Math.}
\newblock Cham: Springer, 2025.

\bibitem{montgomery2002tour}
R.~Montgomery.
\newblock {\em A tour of subriemannian geometries, their geodesics and applications}.
\newblock Number~91. American Mathematical Soc., 2002.

\bibitem{robinson2012course}
D.J.S. Robinson.
\newblock {\em A Course in the Theory of Groups}, volume~80.
\newblock Springer Science \& Business Media, 2012.

\bibitem{warhurstjet}
B.~Warhurst.
\newblock Jet spaces as nonrigid carnot groups.
\newblock {\em Journal of Lie theory}, 15(1):341--356, 2005.

\end{thebibliography}

\end{document}